 \numberwithin{equation}{section}
\theoremstyle{plain}
\newtheorem{thm}{Theorem}[section]
\newtheorem{cor}[thm]{Corollary}
\newtheorem{lem}[thm]{Lemma}
\newtheorem{prop}[thm]{Proposition}
\theoremstyle{definition}
\newtheorem{defn}[thm]{Definition}
\theoremstyle{remark}
\newtheorem{rem}[thm]{Remark}
\newcommand{\N}{\mathbb{N}}
\newcommand{\M}{\mathbb{M}}
\newcommand{\R}{\mathbb{R}}
\newcommand{\E}{\mathbb{E}}
\newcommand{\Div}{\text{div}}
\newcommand{\bp}{\begin{proof}[\ensuremath{\mathbf{Proof}}]}
\newcommand{\ep}{\end{proof}}
\newcommand{\ba}{{\bf a}}
\newcommand{\bu}{{\bf u}}
\newcommand{\bg}{{\bf g}}
\newcommand{\bh}{{\bf h}}
\newcommand{\bw}{{\bf w}}
\newcommand{\bzero}{{\bf 0}}
\newcommand{\Mnd}{\M^{n\times d}}
\newcommand{\Md}{\M^{d\times d}}
\newcommand{\be}{\begin{equation}}
\newcommand{\ee}{\end{equation}}
\newcommand{\id}{\text{id}_{\R}}
\begin{document}

\title{Newton's second law with a semiconvex potential}

\author{Ryan Hynd\footnote{Department of Mathematics, University of Pennsylvania.  Partially supported by NSF grant DMS-1554130.}}

\maketitle

\begin{abstract}
We make the elementary observation that the differential equation associated with Newton's second law $m\ddot\gamma(t)=-D V(\gamma(t))$ always has a solution for given initial conditions provided that the potential energy $V$ is semiconvex. That is, if $-D V$ satisfies a one-sided Lipschitz condition. We will then build upon this idea to verify the existence of solutions for the Jeans-Vlasov equation, the pressureless Euler equations in one spatial dimension, and the equations of elastodynamics under appropriate semiconvexity assumptions. 
\end{abstract}

\tableofcontents

\section{Introduction}
Newton's second law asserts that the trajectory $\gamma:[0,\infty)\rightarrow\R^d$ of a particle with mass $m>0$ satisfies the ordinary differential equation
\be\label{Newton}
m\ddot\gamma(t)=-D V(\gamma(t)),\quad t>0.
\ee 
Here $V: \R^d\rightarrow \R$ is the potential energy of the particle in the sense that $-D V(x)$ is the force acting on the particle when it is located at position $x$.  It is well known that 
equation \eqref{Newton} has a unique solution for given initial conditions 
\be\label{NewtonInit}
\gamma(0)=x_0\quad \text{and}\quad \dot\gamma(0)=v_0
\ee
provided that $-D V:\R^d\rightarrow \R^d$ is Lipschitz continuous.

\par It is not hard to show that the existence of solutions to \eqref{Newton} which satisfies \eqref{NewtonInit} still holds provided $-D V$ satisfies the one sided Lipschitz condition 
\be\label{SemiV}
(D V(x)-D V(y))\cdot (x-y)\ge -L |x-y|^2\quad (x,y\in \R^d)
\ee
for some $L\ge0$. We note $V$ satisfies \eqref{SemiV} if and only if $x\mapsto V(x)+(L/2)|x|^2$ is convex. Therefore, any such $V$ is called 
{\it semiconvex}.  The prototypical potentials we have in mind are convex for large values of $|x|$ as displayed in Figure \ref{PotentialFigRef}.

\begin{figure}[h]
\centering
 \includegraphics[width=.65\textwidth]{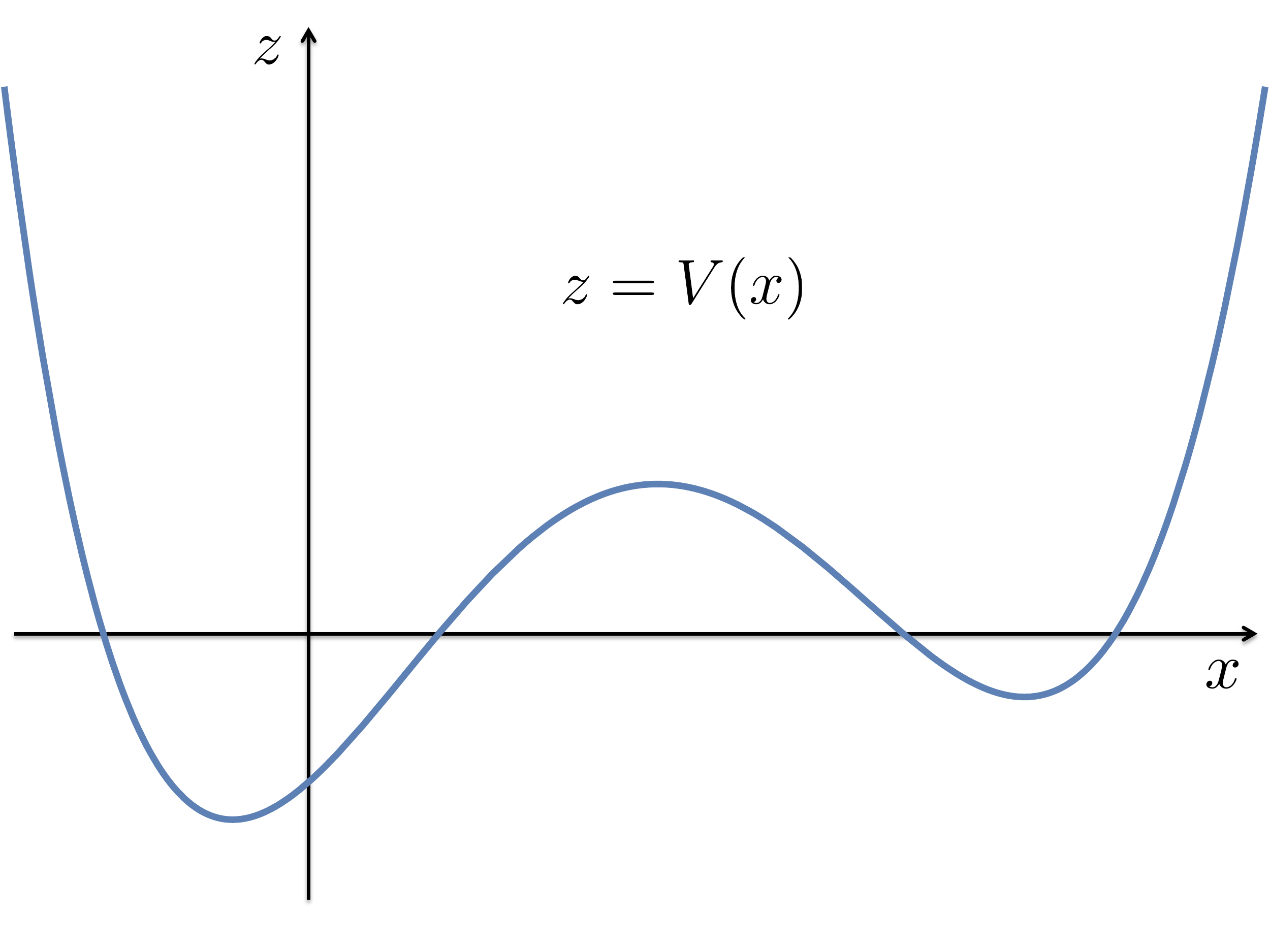}
 \caption{Schematic of a semiconvex potential $V$. }\label{PotentialFigRef}
\end{figure}

\par Our primary interest in considering Newton's second law with a semiconvex potential is to study equations arising in more complex physical models with similar underlying structure.  The first of these models involves the Jeans-Vlasov equation
\be
\partial_t f+v\cdot D_xf-D W*\rho \cdot D_v f=0.
\ee
This is a partial differential equation (PDE) for a time dependent mass distribution $f$ of particles in position and velocity space $(x,v)\in \R^d\times \R^d$ which interact pairwise via the potential $W:\R^d\rightarrow \R$; here $\rho$ denotes the spatial distribution of particles.  We will show that if $W$ is semiconvex, there is a weak solution of this PDE for each given initial mass distribution.    This is the content of section \ref{JVsect} below.

\par The second model we will consider is the pressureless Euler system in one spatial dimension
\be
\begin{cases}
\hspace{.27in}\partial_t\rho+\partial_x(\rho v)=0\\
\partial_t(\rho v)+\partial_x(\rho v^2)=-\rho(W'*\rho).
\end{cases}
\ee
These equations hold in $\R\times(0,\infty)$ and the unknowns are the spatial distribution of particles $\rho$ and a corresponding local velocity field $v$. The particles in question are constrained to move on the real line, interact pairwise via the potential energy $W:\R\rightarrow \R$ and are ``sticky" in the sense that they undergo perfectly inelastic collisions when they collide.  In section \ref{PEsect}, we will verify the existence of a weak solution pair $\rho$ and $v$ for given initial conditions which satisfies the entropy inequality
$$
(v(x,t)-v(y,t))(x-y)\le \frac{\sqrt{L}}{\tanh(\sqrt{L}t)}(x-y)^2
$$
for $x,y$ belonging to the support of $\rho$. Here $L>0$ is chosen so that $W(x)+(L/2)x^2$ is convex. As a result, the semiconvexity of $W$ will play a crucial role in our study.

\par In the final section of this paper, we will consider the equations of motion in the theory of elastodynamics
\be
\bu_{tt}=\text{div}DF(D\bu).
\ee
The unknown is a mapping $\bu:U\times[0,\infty)\rightarrow \R^d$, with $U\subset \R^d$, which encodes the displacement of a $d$ dimensional elastic material.  In particular, this is a system of $d$ coupled PDE for the components of $\bu$.  The gradient $D\bu$ is $d\times d$ matrix valued, and we will assume that $F$ is a semiconvex function on the space of real $d\times d$ matrices.  Under this hypothesis, we use Galerkin's method to approximate measure valued solutions of this system. We also show how to adapt these methods to verify the existence of weak solutions of the perturbed system
\be
\bu_{tt}=\text{div}DF(D\bu)+\eta\Delta \bu_t
\ee
for each $\eta>0$.

\par We acknowledge that the results verified below are not all new.  The existence of weak solutions to Jeans-Vlasov equation with a semiconvex interaction potential was essentially obtained by Ambrosio and Gangbo \cite{MR2361303};  in a recent paper which motivated this study \cite{Hynd}, we verified the existence of solutions of the pressureless Euler system in one spatial dimension with a semiconvex interaction potential; Demoulini \cite{MR1437192} established the existence of measure valued solutions to the equations of elastodynamics with nonconvex stored energy; and the existence of weak solutions of the corresponding perturbed system was verified by Dolzmann and Friesecke \cite{MR1434040}.  Nevertheless, we contend that our approach to verifying existence is unifying.  In particular, we present a general method to address the existence of weak and measure valued solutions of hyperbolic evolution equations when the underlying nonlinearity is appropriately semiconvex.

\section{Preliminaries}
We will be begin our study by showing the ODE associated with Newton's second law always has a solution for given initial conditions provided the potential energy is semiconvex. With some particular applications in mind, we will study an equation slightly more general than \eqref{Newton}. We will also review the weak convergence of Borel probability measures on a metric space.  These observations will be useful when we need to pass to the limit within various approximations built in later sections of this work.

\subsection{Newton systems}\label{ODEsect}
We fix $N\in \N$ and set 
$$
(\R^d)^N:=\prod^N_{i=1}\R^d=\R^d\times\dots\times \R^d.
$$
We represent points $x\in (\R^d)^N$ as $x=(x_1,\dots, x_N)$, where each $x_i\in \R^d$. We also write
$$
x\cdot y:=\sum^N_{i=1}x_i\cdot y_i
$$
and 
$$
|x|:=\left(\sum^N_{i=1}|x_i|^2\right)^{1/2}
$$
for $x,y\in (\R^d)^N$. Note that these definitions coincide with the standard dot product and norms on $(\R^d)^N$.  We will say that $V: (\R^d)^N\rightarrow \R$ is semiconvex provided 
\be\label{VwithLSemi}
(\R^d)^N\ni x\mapsto V(x)+\frac{C}{2}|x|^2
\ee
is convex for some $C\ge 0$.

\par  Let us now consider the {\it Newton system}
\begin{equation}\label{NewtonSystem}
\begin{cases}
m_i\ddot\gamma_i(t)=-D_{x_i}V(\gamma_1(t),\dots, \gamma_N(t)), \quad t>0\\
\hspace{.16in}\gamma_i(0)=x_i,\\
\hspace{.16in}\dot\gamma_i(0)=v_i.
\end{cases}
\end{equation}
Here $m_i>0$ and $(x_i,v_i)\in \R^d\times \R^d$ are given for $i=1,\dots, N$, and we seek a solution $\gamma_1,\dots, \gamma_N:[0,\infty)\rightarrow\R^d$.  It is easy to check that the conservation of energy holds for any solution:
\be\label{ElemConsEnergy}
\sum^N_{i=1}\frac{m_i}{2}|\dot\gamma_i(t)|^2+V(\gamma_1(t),\dots, \gamma_N(t))=
\sum^N_{i=1}\frac{m_i}{2}|v_i|^2+V(x_1,\dots, x_N)
\ee
for $t\ge 0$.  When $V$ is semiconvex, we can use this identity to prove the following assertion. 

\begin{prop}\label{ODENewtonProp}
Suppose that $V: (\R^d)^N\rightarrow \R$ is continuously differentiable
and semiconvex. Then \eqref{NewtonSystem} has a solution $\gamma_1,\dots, \gamma_N: C^2([0,\infty);\R^d)$.
\end{prop}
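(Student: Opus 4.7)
The plan is to regularize the potential, solve the regularized ODE globally by combining energy conservation with semiconvexity, and then extract a limit.

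First I would mollify: set $V^\epsilon := V*\eta_\epsilon$ for a standard mollifier $\eta_\epsilon$ on $(\R^d)^N$. Then $V^\epsilon\in C^\infty$ and $DV^\epsilon\to DV$ locally uniformly. Moreover $V^\epsilon$ remains semiconvex with the same constant $C$, since convolving the convex function $x\mapsto V(x)+\frac{C}{2}|x|^2$ with $\eta_\epsilon$ only adds a constant to $V^\epsilon(x)+\frac{C}{2}|x|^2$. Because $DV^\epsilon$ is smooth, and hence locally Lipschitz, classical Picard--Lindel\"of theory furnishes a unique local $C^2$ solution $\gamma^\epsilon=(\gamma^\epsilon_1,\dots,\gamma^\epsilon_N)$ to the system \eqref{NewtonSystem} with $V$ replaced by $V^\epsilon$.

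Second, I would show the $\gamma^\epsilon$ extend globally with $\epsilon$-uniform bounds on each $[0,T]$. The semiconvexity of $V^\epsilon$ yields the lower bound $V^\epsilon(x)\ge V^\epsilon(0)+DV^\epsilon(0)\cdot x - \frac{C}{2}|x|^2$. Feeding this into the conservation-of-energy identity analogous to \eqref{ElemConsEnergy} (which $\gamma^\epsilon$ satisfies by direct computation) and using $\min_i m_i>0$, one obtains a pointwise bound of the form $|\dot\gamma^\epsilon(t)|^2\le A+B|\gamma^\epsilon(t)|^2$, where $A$ and $B$ remain bounded as $\epsilon\to 0$ since $V^\epsilon(0)\to V(0)$ and $DV^\epsilon(0)\to DV(0)$. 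Combined with $\frac{d}{dt}|\gamma^\epsilon|^2\le |\gamma^\epsilon|^2+|\dot\gamma^\epsilon|^2$, Gronwall's inequality produces an $\epsilon$-uniform exponential-in-$t$ bound on $|\gamma^\epsilon(t)|$, and hence on $|\dot\gamma^\epsilon(t)|$, on any compact time interval. This rules out finite-time blow-up. The ODE itself, together with continuity of $DV$ on the compact set the $\gamma^\epsilon$ occupy, then bounds $|\ddot\gamma^\epsilon|$ uniformly.

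Third, I would pass to the limit. Arzel\`a--Ascoli applied to $\gamma^\epsilon$ and $\dot\gamma^\epsilon$ (via the uniform bounds on $\dot\gamma^\epsilon$ and $\ddot\gamma^\epsilon$), followed by a diagonal argument across intervals $[0,k]$ with $k\in\N$, yields a subsequence $\gamma^{\epsilon_j}\to\gamma$ in $C^1_{\text{loc}}([0,\infty);(\R^d)^N)$. Since $DV^\epsilon\to DV$ locally uniformly and $\gamma^{\epsilon_j}\to\gamma$ uniformly on compacts, the right-hand side of the ODE converges uniformly on compacts, forcing each $\ddot\gamma^{\epsilon_j}_i$ to converge uniformly on compacts to a continuous function. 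Hence $\gamma_i\in C^2([0,\infty);\R^d)$, the system \eqref{NewtonSystem} holds in the limit, and the initial conditions are preserved termwise.

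The main obstacle is the a priori energy estimate: one must check that the constants in the bound on $|\dot\gamma^\epsilon|^2$ depend continuously on $\epsilon$ so that they remain controlled in the limit, and organize the Gronwall step so that no hidden $\epsilon$-dependence slips in. Once the $\epsilon$-uniform bounds on $\gamma^\epsilon$, $\dot\gamma^\epsilon$, and $\ddot\gamma^\epsilon$ are in hand, the compactness and limit-passage are routine.
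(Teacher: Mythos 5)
Your proof is correct, but it is structured rather differently from the paper's, and it carries an unnecessary detour. Since $V$ is assumed $C^1$, the force field $-DV$ is already continuous, so Peano's theorem applies to \eqref{NewtonSystem} directly with no mollification. The paper does exactly this: take the maximal Peano solution on $[0,T)$, combine conservation of energy with the semiconvexity lower bound to show $\gamma$ and $\dot\gamma$ cannot blow up on any finite interval, and invoke the continuation principle to conclude $T=\infty$. You instead mollify to get smooth $V^\eps$, solve each regularized system globally by Picard--Lindel\"of plus the same energy/semiconvexity bound, and then extract a $C^1_{\mathrm{loc}}$-convergent subsequence by Arzel\`a--Ascoli and a diagonal argument. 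This route is sound (the energy estimate you derive does indeed yield $\eps$-uniform bounds, the $\ddot\gamma^\eps$ bound follows from $DV^\eps\to DV$ locally uniformly, and the limit is a $C^2$ solution), and it would remain valid if $V$ were merely semiconvex with a locally bounded but not necessarily continuous subgradient, so it is in that sense slightly more robust. But for the hypothesis actually stated, mollification plus compactness is extra machinery: the paper's approach dispenses with the approximating family entirely and replaces the Arzel\`a--Ascoli step with the standard continuation criterion for ODEs, giving a shorter argument with the same core a priori estimate driving both proofs.
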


\begin{proof}
By Peano's existence theorem, there is a solution $\gamma_1,\dots, \gamma_N:[0,T)\rightarrow\R^d$ of 
\begin{equation}\label{NewtonSystemT}
\begin{cases}
m_i\ddot\gamma_i(t)=-D_{x_i}V(\gamma_1(t),\dots, \gamma_N(t)), \quad t\in (0,T)\\
\hspace{.16in}\gamma_i(0)=x_i\\
\hspace{.16in}\dot\gamma_i(0)=v_i
\end{cases}
\end{equation}
for some $T\in (0,\infty]$. We may also assume that $[0,T)$ is the maximal interval of existence for this solution. In particular, if $T$ is finite and $\sup_{0\le t< T}|\gamma_i(t)|$ and $\sup_{0\le t< T}|\dot\gamma_i(t)|$
are finite for each $i=1,\dots, N$, this solution can be continued to $[0,T+\epsilon)$ for some $\epsilon>0$ (Chapter 1 of \cite{MR587488}). This would contradict that $[0,T)$ is the maximal interval of existence from which we would conclude that $T=\infty$. 

\par Furthermore, as
$$
\sup_{0\le t< T}|\gamma_i(t)|\le |x_i| + T\sup_{0\le t< T}|\dot\gamma_i(t)|,
$$
we focus on bounding $\sup_{0\le t< T}|\dot\gamma_i(t)|$.  To this end, we will employ the semiconvexity of $V$ and select $L\ge 0$ so that 
$$
(\R^d)^N\ni y\mapsto V(y)+\frac{L}{2}\sum^N_{i=1}m_i|y_i|^2
$$
is convex. We set $\gamma=(\gamma_1,\dots,\gamma_N)$, $x=(x_1,\dots, x_N)$ and note
\begin{align*}
V(\gamma(t))&\ge V(x)+DV(x)\cdot(\gamma(t)-x)-\frac{L}{2}\sum^N_{i=1}m_i|\gamma_i(t)-x_i|^2\\
 &= V(x)+\sum^N_{i=1}D_{x_i}V(x)\cdot(\gamma_i(t)-x_i)-\frac{L}{2}\sum^N_{i=1}m_i|\gamma_i(t)-x_i|^2\\
  &\ge V(x)+\sum^N_{i=1}\frac{D_{x_i}V(x)}{\sqrt{m_i}}\cdot \sqrt{m_i}(\gamma_i(t)-x_i)-\frac{L}{2}\sum^N_{i=1}m_i|\gamma_i(t)-x_i|^2\\
&\ge V(x)-\frac{1}{2}\sum^N_{i=1}\frac{\left|D_{x_i}V(x)\right|^2}{m_i}-\frac{1}{2}\left(L+1\right)\sum^N_{i=1}m_i|\gamma_i(t)-x_i|^2\\
&= V(x)-\frac{1}{2}\sum^N_{i=1}\frac{\left|D_{x_i}V(x)\right|^2}{m_i}-\frac{1}{2}\left(L+1\right)\sum^N_{i=1}m_i\left|\int^t_0\dot\gamma_i(s)ds\right|^2\\
&\ge V(x)-\frac{1}{2}\sum^N_{i=1}\frac{\left|D_{x_i}V(x)\right|^2}{m_i}-\frac{1}{2}\left(L+1\right)t\int^t_0\sum^N_{i=1}m_i\left|\dot\gamma_i(s)\right|^2ds.
\end{align*}

\par Substituting this lower bound for $V(\gamma(t))$ in the conservation of energy \eqref{ElemConsEnergy} gives  
\be\label{ConsIneq}
\sum^N_{i=1}m_i|\dot\gamma_i(t)|^2\le 
\sum^N_{i=1}m_i|v_i|^2 +\sum^N_{i=1}\frac{\left|D_{x_i}V(x)\right|^2}{m_i}+\left(L+1\right)t\int^t_0\sum^N_{i=1}m_i\left|\dot\gamma_i(s)\right|^2ds.
\ee
Observe
\begin{align*}
&\frac{d}{dt}\left\{e^{-\frac{1}{2}\left(L+1\right)t^2}\int^t_0\sum^N_{i=1}m_i|\dot\gamma_i(s)|^2ds\right\}\\
&\hspace{1in}=e^{-\frac{1}{2}\left(L+1\right)t^2}\left(\sum^N_{i=1}m_i|\dot\gamma_i(t)|^2-\left(L+1\right)t\int^t_0\sum^N_{i=1}m_i\left|\dot\gamma_i(s)\right|^2ds\right)\\
&\hspace{1in} \le e^{-\frac{1}{2}\left(L+1\right)t^2}\left(\sum^N_{i=1}m_i|v_i|^2 +\sum^N_{i=1}\frac{\left|D_{x_i}V(x)\right|^2}{m_i}\right),
\end{align*}
so that 
\be\label{PrimeEnergyEst2}
\int^t_0\sum^N_{i=1}m_i|\dot\gamma_i(s)|^2ds\le e^{\frac{1}{2}\left(L+1\right)t^2} \int^t_0e^{-\frac{1}{2}\left(L+1\right)s^2}ds\left(\sum^N_{i=1}m_i|v_i|^2 +\sum^N_{i=1}\frac{\left|D_{x_i}V(x)\right|^2}{m_i}\right).
\ee
Combining this inequality with \eqref{ConsIneq} gives
\begin{align}\label{PrimeEnergyEst}
&\sum^N_{i=1}m_i|\dot\gamma_i(t)|^2\le \\
& \left(1+
\left(L+1\right)t e^{\frac{1}{2}\left(L+1\right)t^2} \int^t_0e^{-\frac{1}{2}\left(L+1\right)s^2}ds\right)\left(\sum^N_{i=1}m_i|v_i|^2 +\sum^N_{i=1}\frac{\left|D_{x_i}V(x)\right|^2}{m_i}\right).
\end{align}
We conclude that $\sup_{0\le t< T}|\dot\gamma_i(t)|<\infty$ for any $T>0$, so the maximal interval of existence for this solution is $[0,\infty)$. 
\end{proof}

\subsection{Narrow convergence}
We now recall some important facts about the convergence of probability measures. Our primary reference for this material is the monograph by Ambrosio, Gigli, and Savar\'e \cite{AGS}. Let $(X, d)$ be a complete, separable metric space and ${\cal P}(X)$ denote the collection of Borel probability measures on $X$. 
Recall this space has a natural topology: $(\mu_k)_{k\in \N}\subset {\cal P}(X)$ {\it converges narrowly} to $\mu \in {\cal P}(X)$ 
provided 
\be\label{narrowConv}
\lim_{k\rightarrow\infty}\int_X gd\mu_k=\int_X gd\mu
\ee
for each $g$ belonging to $C_b(X)$, the space of bounded continuous functions on $X$.   We note that ${\cal P}(X)$ is metrizable. In particular, we can choose a metric of the form
\be\label{NarrowMetric}
\mathcal{d}(\mu,\nu):=\sum^\infty_{j=1}\frac{1}{2^j}\left|\int_{X}h_jd\mu-\int_{X}h_jd\nu\right|,\quad \mu,\nu\in {\cal P}(X).
\ee
Here each $h_j:X\rightarrow \R$ satisfies $\|h_j\|_\infty\le 1$ and Lip$(h_j)\le 1$ (Remark 5.1.1 of \cite{AGS}).
\par It will be important for us to be able to identify when a sequence of measures $(\mu_k)_{k\in \N}\subset {\cal P}(X)$ has a subsequence that converges narrowly. Fortunately, there is a convenient necessary and sufficient criterion which can be stated as follows. The sequence $(\mu_k)_{k\in \N}$ is precompact in the narrow topology if and only if there is a function $\varphi: X\rightarrow [0,\infty]$ with 
compact sublevel sets for which 
\be\label{Prokhorov}
\sup_{k\in \N}\int_{X}\varphi d\mu_k<\infty
\ee
(Remark 5.1.5 of \cite{AGS}).
\par We will also need to pass to the limit in \eqref{narrowConv} with functions $g$ which may not bounded. This naturally leads us to the notion of uniform integrability.   A Borel function 
$g: X\rightarrow [0,\infty]$ is said to be {\it uniformly integrable} with respect to the sequence $(\mu_k)_{k\in \N}$ 
provided 
$$
\lim_{R\rightarrow\infty}\int_{\left\{g\ge R\right\}}g d\mu_k=0
$$
uniformly in $k\in \N$.  It can be shown that if $(\mu_k)_{k\in \N}$ converges narrowly to $\mu$, $g: X\rightarrow \R$ is continuous and $|g|$ is uniformly integrable, then \eqref{narrowConv} holds (Lemma 5.1.7 in \cite{AGS}).

\section{Jeans-Vlasov equation}\label{JVsect}
The Jeans-Vlasov equation is
\be\label{JVeqn}
\partial_t f+v\cdot D_xf-D W*\rho \cdot D_v f=0,
\ee
which holds for all $(x,v,t)\in \R^d\times \R^d\times(0,\infty)$.  This equation provides a mean 
field description of a distribution of particles that interact pairwise via a force given by a potential energy $W$. Here $f$ represents the time dependent distribution of mass among all positions and velocity $(x,v)\in\R^d\times \R^d$ and $\rho$ represents the time dependent distribution of mass among all positions $x\in \R^d$.  Our main assumption will be that $W$ is semiconvex. Under this hypothesis, we will show that there is always one properly interpreted weak solution of \eqref{JVeqn} for a given initial mass distribution $f_0$ 
\be\label{fatzero}
f|_{t=0}=f_0.
\ee

\subsection{Weak solutions}
We note that any smooth solution $f=f(x,v,t)\ge 0$ of \eqref{JVeqn} with compact support in the $(x,v)$ variables preserves total mass in the sense that
$$
\frac{d}{dt}\int_{\R^d\times \R^d}f(x,v,t)dxdv=0.
$$
Consequently, we will suppose the total mass is initially equal to 1 and study solutions which take values in the space ${\cal P}(\R^d\times \R^d)$.  This leads to the following definition of a weak solution which specifies a type of measure valued solution.
\begin{defn}
Suppose $f_0\in{\cal P}(\R^d\times \R^d)$.  A {\it weak solution} of the Jeans-Vlasov equation \eqref{JVeqn} which satisfies the initial condition \eqref{fatzero} is a  narrowly continuous mapping $f: [0,\infty)\rightarrow {\cal P}(\R^d\times \R^d); t\mapsto f_t$ which satisfies
\be\label{WeakSolnCond}
\int^\infty_0\int_{\R^d\times \R^d}\left(\partial_t\phi+v\cdot D_x\phi -D W*\rho_t \cdot D_v\phi\right)df_tdt+\int_{\R^d\times \R^d}\phi|_{t=0}df_0=0
\ee
for each $\phi\in C^1_c(\R^d\times \R^d\times[0,\infty))$.  Here $\rho: [0,\infty)\rightarrow {\cal P}(\R^d); t\mapsto \rho_t$ is defined via
$$
\rho_t(B)=f_t(B\times \R^d)
$$
for each Borel subset $B\subset \R^d$. 
\end{defn}
\begin{rem}
Condition \eqref{WeakSolnCond} is equivalent to requiring that
\be\label{DistriCondJV}
\int_{\R^d\times \R^d}\psi df_t =\int_{\R^d\times \R^d}\psi df_0 
+\int^t_0\int_{\R^d\times \R^d}\left(v\cdot D_x\psi -D W*\rho_s \cdot D_v\psi\right)df_sds
\ee
for each $\psi\in C^1_c(\R^d\times \R^d)$ and $t\ge 0$.
\end{rem}

\par The Jeans-Vlasov system can be derived by first considered $N$ point masses in $\R^d$ that interactive pairwise by force given by $-D W$.  If $\gamma_1,\dots,\gamma_N$ describe the trajectories of these particles and $m_1,\dots, m_N>0$ are the respective masses with $\sum^N_{i=1}m_i=1$, the corresponding equations of motion are
\be\label{NewtonSystem}
m_i\ddot \gamma_i(t)=-\sum^N_{j= 1}m_im_jD W(\gamma_i(t)-\gamma_j(t)), \quad t>0
\ee
for $i=1,\dots, N$. We will assume throughout this section that
\be\label{Wassump}
\begin{cases}
W\in C^1(\R^d)\\\\
W(-z)=W(z)\; \text{for all}\; z\in \R^d\\\\
\R^d\ni z\mapsto W(z)+\dfrac{L}{2}|z|^2\;\;\text{is convex for some $L\ge 0$}
\\\\
\displaystyle\sup_{z\in \R^d}\displaystyle\frac{|DW(z)|}{1+|z|}<\infty.
\end{cases}
\ee
Note that as $W$ is even and $C^1$, $|DW(0)|=0$. Thus, the contribution to the force $DW(\gamma_i(t)-\gamma_j(t))$ in \eqref{NewtonSystem} vanishes for $j=i$.

\par It is also possible to write the \eqref{NewtonSystem} as
\be
m_i\ddot\gamma_i(t)=-D_{x_i}V(\gamma_1(t),\dots,\gamma_N(t)), \quad t>0
\ee
$(i=1,\dots, N)$, where 
$$
V(x)=\frac{1}{2}\sum^N_{i,j=1}m_im_jW(x_i-x_j), \quad x\in (\R^d)^N.
$$
It is evident that $V$ is continuously differentiable and the semiconvexity of $V$ follows from the identity
\be\label{VVlasovSemiCon}
V(x)+\frac{L}{2}\sum^N_{i=1}m_i|x_i|^2=\frac{1}{2}\sum^N_{i,j=1}m_im_j
\left(W(x_i-x_j) + \frac{L}{2}|x_i-x_j|^2\right)+\frac{L}{2}\left|\sum^N_{i=1}m_ix_i\right|^2.
\ee
In particular, the right hand side of \eqref{VVlasovSemiCon} is convex due to our assumption that $W(z)+(L/2)|z|^2$ is convex. By Proposition \ref{ODENewtonProp}, \eqref{NewtonSystem} has a solution $\gamma_1,\dots, \gamma_N\in C^2([0,\infty);\R^d)$ for any given set of initial conditions.

\par It also turns out that the paths $\gamma_1,\dots, \gamma_N$ generate a weak solution of the Jeans-Vlasov equation.
\begin{prop}\label{PropDiscreteVlasov}
Suppose $\gamma_1,\dots, \gamma_N\in C^2([0,\infty);\R^d)$ is a solution of \eqref{NewtonSystem} which satisfies
 \be\label{initialGamma}
\gamma_i(0)=x_i\quad \text{and}\quad \dot\gamma_i(0)=v_i.
\ee
Define $f: [0,\infty)\rightarrow  {\cal P}(\R^d\times \R^d); t\mapsto f_t$ as
\be\label{discreteWeakSolnJV}
f_t=\sum^N_{i=1}m_i\delta_{(\gamma_i(t),\dot\gamma_i(t))}
\ee
for each $t\ge 0$. Then $f$ is a weak solution of the Jeans-Vlasov equation \eqref{JVeqn} with initial condition 
\be\label{DiscreteFzero}
f_0=\sum^N_{i=1}m_i\delta_{(x_i,v_i)}.
\ee
\end{prop}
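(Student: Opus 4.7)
The plan is to verify the two defining properties of a weak solution: narrow continuity in time, and the distributional identity \eqref{WeakSolnCond}. Both follow from a direct computation once one observes how the Newton system \eqref{NewtonSystem} is exactly the characteristic ODE for Jeans--Vlasov.

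First I would check narrow continuity. For any $g\in C_b(\R^d\times \R^d)$,
\begin{equation*}
\int_{\R^d\times \R^d} g\, df_t = \sum_{i=1}^N m_i\, g(\gamma_i(t),\dot\gamma_i(t)),
\end{equation*}
which is continuous in $t$ since $\gamma_i,\dot\gamma_i\in C([0,\infty);\R^d)$. Hence $t\mapsto f_t$ is narrowly continuous. Next I would identify the spatial marginal: for each Borel $B\subset\R^d$,
\begin{equation*}
\rho_t(B)=f_t(B\times\R^d)=\sum_{i=1}^N m_i\delta_{\gamma_i(t)}(B),
\end{equation*}
so that the convolution $DW*\rho_t$ is defined everywhere (the sum is finite) and
\begin{equation*}
(DW*\rho_t)(\gamma_i(t))=\sum_{j=1}^N m_j\, DW(\gamma_i(t)-\gamma_j(t)).
\end{equation*}
Dividing the Newton system \eqref{NewtonSystem} by $m_i>0$ gives the crucial identity
\begin{equation*}
\ddot\gamma_i(t)=-(DW*\rho_t)(\gamma_i(t)),\qquad i=1,\dots,N.
\end{equation*}

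For the weak formulation, I would take $\phi\in C^1_c(\R^d\times \R^d\times [0,\infty))$ and evaluate the spatial--velocity integral pointwise in $t$:
\begin{align*}
&\int_{\R^d\times \R^d}\bigl(\partial_t\phi+v\cdot D_x\phi-DW*\rho_t\cdot D_v\phi\bigr)\,df_t\\
&\qquad=\sum_{i=1}^N m_i\bigl(\partial_t\phi+\dot\gamma_i(t)\cdot D_x\phi+\ddot\gamma_i(t)\cdot D_v\phi\bigr)(\gamma_i(t),\dot\gamma_i(t),t)\\
&\qquad=\sum_{i=1}^N m_i\,\frac{d}{dt}\bigl[\phi(\gamma_i(t),\dot\gamma_i(t),t)\bigr],
\end{align*}
where the second equality uses the chain rule together with the identity above. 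Integrating in $t\in[0,\infty)$ and using the compact support of $\phi$ in time, the fundamental theorem of calculus yields
\begin{equation*}
\int_0^\infty\!\!\int_{\R^d\times \R^d}\bigl(\partial_t\phi+v\cdot D_x\phi-DW*\rho_t\cdot D_v\phi\bigr)\,df_t\,dt=-\sum_{i=1}^N m_i\,\phi(x_i,v_i,0)=-\int_{\R^d\times\R^d}\phi|_{t=0}\,df_0,
\end{equation*}
which is exactly \eqref{WeakSolnCond}.

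There is no real obstacle here: the proposition is essentially a tautology once one recognizes the Newton system as the equation for $N$ characteristics of \eqref{JVeqn}. The only point that deserves care is that the convolution $DW*\rho_t$ is evaluated against an atomic measure, so the symmetry $W(-z)=W(z)$ (hence $DW(0)=0$) in \eqref{Wassump} ensures the $j=i$ term contributes nothing and the computation matches \eqref{NewtonSystem} exactly; the growth assumption on $DW$ in \eqref{Wassump} is not needed in this discrete setting but will matter in the limit passage carried out later in the section.
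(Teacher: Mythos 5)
Your proof is correct and follows the same route as the paper: identify the spatial marginal as the empirical measure on the $\gamma_i(t)$, rewrite the Newton system as $\ddot\gamma_i=-(DW*\rho_t)(\gamma_i)$, and recognize the integrand in the weak formulation as a total time derivative $\frac{d}{dt}\phi(\gamma_i(t),\dot\gamma_i(t),t)$. Your added remark on $DW(0)=0$ via the evenness of $W$ is a point the paper also makes, just before the proposition.
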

\begin{proof}
For $f_t$ defined by \eqref{discreteWeakSolnJV}, we have that its spatial marginal distribution is 
\be
\rho_t=\sum^N_{i=1}m_i\delta_{\gamma_i(t)}\in {\cal P}(\R^d)
\ee
for $t\ge 0$. It follows that
$$
\ddot\gamma_i(t)=-\sum^N_{j= 1}m_jD W(\gamma_i(t)-\gamma_j(t))=-\left(DW*\rho_t\right)(\gamma_i(t))
$$
for $t\ge 0$ and $i=1,\dots, N$.

\par As $\gamma_1,\dots, \gamma_N\in C^2([0,\infty);\R^d)$, $f: [0,\infty)\rightarrow {\cal P}(\R^d\times \R^d)$ is narrowly continuous. Moreover, for $\phi\in C^1_c(\R^d\times \R^d\times[0,\infty))$, 
\begin{align*}
&\int^\infty_0\int_{\R^d\times\R^d}\left(\partial_t\phi + v\cdot D_x\phi-D W*\rho_t\cdot D_v\phi\right)df_tdt\\
&=\int^\infty_0\sum^N_{i=1}m_i\left(\partial_t\phi(\gamma_i(t),\dot \gamma_i(t),t)+\dot \gamma_i(t)\cdot D_x\phi(\gamma_i(t),\dot \gamma_i(t),t)
-(DW*\rho_t)(\gamma_i(t))\cdot D_v\phi(\gamma_i(t),\dot \gamma_i(t),t)\right)dt\\
&=\int^\infty_0\sum^N_{i=1}m_i\left(\partial_t\phi(\gamma_i(t),\dot \gamma_i(t),t)+\dot \gamma_i(t)\cdot D_x\phi(\gamma_i(t),\dot \gamma_i(t),t)
+\ddot \gamma_i(t)\cdot D_v\phi(\gamma_i(t),\dot \gamma_i(t),t)\right)dt\\
&=\int^\infty_0\sum^N_{i=1}m_i\frac{d}{dt}\phi(\gamma_i(t),\dot \gamma_i(t),t)dt\\
&=-\sum^N_{i=1}m_i\phi(\gamma_i(0),\dot \gamma_i(0),0)\\
&=-\int_{\R^d\times\R^d}\phi|_{t=0}df_0.
\end{align*}
\end{proof}
\begin{rem}\label{NoNeedCompact}
Since the support of $f_t$ is a compact subset of $\R^d\times \R^d$, $f$ satisfies \eqref{WeakSolnCond} for each $\phi\in C^1(\R^d\times \R^d\times[0,\infty))$ and \eqref{DistriCondJV} for each $\psi\in C^1(\R^d\times \R^d)$. That is, 
we need not require the test functions to have compact support in $\R^d\times \R^d$. This detail will 
be useful when we consider the compactness of solutions of this type. 
\end{rem}
Each of the solutions we constructed above inherit a few moment estimates from the energy estimates satisfied by $\gamma_1,\dots,\gamma_N$.  In order to conveniently express these inequalities, we make use of the function 
\be\label{ChiFun}
\chi(t):=e^{\frac{1}{2}(L+1)t^2}\int^t_0e^{-\frac{1}{2}(L+1)s^2}ds, \quad t\ge 0.
\ee
We also note that $\chi$ is increasing with
$$
\dot\chi(t)=1+(L+1)te^{\frac{1}{2}(L+1)t^2}\int^t_0e^{-\frac{1}{2}(L+1)s^2}ds, \quad t\ge 0.
$$
\begin{prop}\label{KineticEnergyEstJV}
Define $f$ as in \eqref{discreteWeakSolnJV}. Then
\begin{align*}
\int_{\R^d\times\R^d}|v|^2df_t\le \left[\int_{\R^d\times \R^d}|v|^2df_0+\int_{\R^d\times \R^d}|DW(x-y)|^2d\rho_0(x)d\rho_0(y)\right]\dot\chi(t)
\end{align*}
for each $t\ge 0$. 
\end{prop}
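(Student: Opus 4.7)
The plan is to read off the estimate directly from the kinetic-energy bound \eqref{PrimeEnergyEst} already established during the proof of Proposition \ref{ODENewtonProp}, and then translate its terms into the measure-theoretic quantities appearing on the right-hand side of the claim. Writing $f_t=\sum_{i=1}^N m_i\delta_{(\gamma_i(t),\dot\gamma_i(t))}$ immediately gives
\[
\int_{\R^d\times\R^d}|v|^2\,df_t=\sum_{i=1}^N m_i|\dot\gamma_i(t)|^2, \qquad \int_{\R^d\times\R^d}|v|^2\,df_0=\sum_{i=1}^N m_i|v_i|^2,
\]
and comparing the coefficient $1+(L+1)t\,e^{\frac12(L+1)t^2}\int_0^t e^{-\frac12(L+1)s^2}\,ds$ appearing in \eqref{PrimeEnergyEst} with the definition \eqref{ChiFun} identifies it as $\dot\chi(t)$.

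The only nontrivial step is to bound the forcing term $\sum_i |D_{x_i}V(x)|^2/m_i$ from above by the double integral $\int|DW(x-y)|^2\,d\rho_0(x)d\rho_0(y)$. Since $V(x)=\frac12\sum_{i,j}m_im_j W(x_i-x_j)$, a direct differentiation—using that $DW$ is odd (from $W$ being even, \eqref{Wassump}) so that the two halves of the double sum combine rather than cancel—yields the clean identity $D_{x_i}V(x)=m_i\sum_{j}m_j\,DW(x_i-x_j)$. Since $\sum_j m_j=1$, Jensen's inequality applied to the convex function $|\cdot|^2$ then gives $\bigl|\sum_j m_j DW(x_i-x_j)\bigr|^2\le\sum_j m_j|DW(x_i-x_j)|^2$, and hence
\[
\sum_{i=1}^N\frac{|D_{x_i}V(x)|^2}{m_i}\le\sum_{i,j=1}^N m_im_j\,|DW(x_i-x_j)|^2=\int_{\R^d\times\R^d}|DW(x-y)|^2\,d\rho_0(x)d\rho_0(y),
\]
the last equality following from $\rho_0=\sum_i m_i\delta_{x_i}$. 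Substituting these three identifications into \eqref{PrimeEnergyEst} concludes the proof.

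There is no real obstacle here; the argument is essentially a bookkeeping translation between the particle-level estimate of Proposition \ref{ODENewtonProp} and the measure-theoretic formulation. The only points that require care are obtaining the correct factor of $m_i$ in $D_{x_i}V$, which depends crucially on the symmetry $W(-z)=W(z)$, and recognizing that the bracketed exponential coefficient from \eqref{PrimeEnergyEst} is precisely $\dot\chi(t)$.
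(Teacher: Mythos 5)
Your proof is correct and follows essentially the same route as the paper: identify the coefficient in \eqref{PrimeEnergyEst} as $\dot\chi(t)$, compute $D_{x_i}V(x)=m_i\sum_j m_j DW(x_i-x_j)$ (where the evenness of $W$ makes the two halves of the double sum add), apply Jensen's inequality with $\sum_j m_j=1$, and rewrite both sides in terms of $f_t$, $f_0$, $\rho_0$. The only cosmetic difference is that you spell out the differentiation and the use of Jensen explicitly, whereas the paper states the bound on $|D_{x_i}V(x)|^2$ directly.
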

\begin{proof}
As $\sum^N_{j=1}m_j=1$, 
$$
|D_{x_i}V(x)|^2=\left|\sum^N_{j= 1}m_jm_iD W(x_i-x_j)\right|^2\le \sum^N_{j= 1}m_jm_i^2|D W(x_i-x_j)|^2.
$$
Therefore, 
$$
\sum^N_{i=1}\frac{\left|D_{x_i}V(x)\right|^2}{m_i}\le \sum^N_{i,j=1}m_im_j|DW(x_i-x_j)|^2.
$$
And by \eqref{PrimeEnergyEst}, 
\begin{align*}
\int_{\R^d\times\R^d}|v|^2df_t& =\sum^N_{i=1}m_i|\dot\gamma_i(t)|^2\\
&\le \left[\sum^N_{i=1}m_i|v_i|^2 +\sum^N_{i=1}\frac{\left|D_{x_i}V(x)\right|^2}{m_i}\right]\dot\chi(t)\\
&\le \left[\sum^N_{i=1}m_i|v_i|^2+\sum^N_{i,j=1}m_im_j|DW(x_i-x_j)|^2\right]\dot\chi(t)\\
&=\left[\int_{\R^d\times \R^d}|v|^2df_0+\int_{\R^d\times \R^d}|DW(x-y)|^2d\rho_0(x)d\rho_0(y)
\right]\dot\chi(t)
\end{align*}
for $t\ge0$. 
\end{proof}
\begin{prop}\label{XsquaredEstJV}
Define $f$ as in \eqref{discreteWeakSolnJV}. Then
\begin{align*}
&\int_{\R^d\times\R^d}\frac{1}{2}|x|^2df_t\le \int_{\R^d\times\R^d}|x|^2 df_0+\left[\int_{\R^d\times\R^d}
|v|^2 df_0+\int_{\R^d\times\R^d}|DW(x-y)|^2d\rho_0(x)d\rho_0(y) \right]t\chi(t)
\end{align*}
for each $t\ge 0$. 
\end{prop}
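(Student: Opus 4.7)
The idea is to bound $\sum_{i=1}^N m_i |\gamma_i(t)|^2$ by reducing it, via the fundamental theorem of calculus, to an integral of the kinetic energy, which we already control by Proposition~\ref{KineticEnergyEstJV}'s underlying estimate \eqref{PrimeEnergyEst2}.

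First, I would write $\gamma_i(t) = x_i + \int_0^t \dot\gamma_i(s)\,ds$ and apply the elementary inequality $|a+b|^2 \le 2|a|^2 + 2|b|^2$ together with the Cauchy--Schwarz bound $\bigl|\int_0^t \dot\gamma_i(s)\,ds\bigr|^2 \le t \int_0^t |\dot\gamma_i(s)|^2\,ds$. Multiplying by $m_i/2$ and summing in $i$ yields
\begin{equation*}
\sum_{i=1}^N \frac{m_i}{2} |\gamma_i(t)|^2 \le \sum_{i=1}^N m_i |x_i|^2 + t \int_0^t \sum_{i=1}^N m_i |\dot\gamma_i(s)|^2 \, ds.
\end{equation*}

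Next, I would invoke \eqref{PrimeEnergyEst2} from the proof of Proposition~\ref{ODENewtonProp} to control the time integral of the kinetic energy:
\begin{equation*}
\int_0^t \sum_{i=1}^N m_i |\dot\gamma_i(s)|^2 \, ds \le \chi(t) \left( \sum_{i=1}^N m_i |v_i|^2 + \sum_{i=1}^N \frac{|D_{x_i} V(x)|^2}{m_i} \right).
\end{equation*}
Then I would reuse the same bound on $\sum_i |D_{x_i} V(x)|^2 / m_i$ already established in the proof of Proposition~\ref{KineticEnergyEstJV}, namely $\sum_{i=1}^N |D_{x_i} V(x)|^2 / m_i \le \sum_{i,j=1}^N m_i m_j |DW(x_i - x_j)|^2$, which follows from Jensen's inequality applied to $|\sum_j m_j DW(x_i-x_j)|^2$ using $\sum_j m_j = 1$.

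Finally, I would combine the two displayed bounds and rewrite each sum as an integral against $f_0$ or $\rho_0 \otimes \rho_0$, exactly as in the final step of Proposition~\ref{KineticEnergyEstJV}. There is no real obstacle here: the argument is a direct moment estimate, and essentially all the analytic content has already been packaged into \eqref{PrimeEnergyEst2} and the variance-style bound on $|D_{x_i}V|^2$. The only thing worth double-checking is the factor of $\tfrac{1}{2}$ on the left-hand side, which is exactly what is needed so that the constant in $|a+b|^2 \le 2|a|^2 + 2|b|^2$ produces the coefficients $1$ and $t\chi(t)$ appearing in the stated inequality.
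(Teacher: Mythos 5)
Your proposal is correct and follows essentially the same route as the paper's proof: write $\gamma_i(t)=x_i+\int_0^t\dot\gamma_i(s)\,ds$, apply $|a+b|^2\le2|a|^2+2|b|^2$ and Cauchy--Schwarz to get $\sum_i\frac{m_i}{2}|\gamma_i(t)|^2\le\sum_i m_i|x_i|^2+t\int_0^t\sum_i m_i|\dot\gamma_i(s)|^2\,ds$, then invoke \eqref{PrimeEnergyEst2} together with the Jensen-type bound $\sum_i|D_{x_i}V(x)|^2/m_i\le\sum_{i,j}m_im_j|DW(x_i-x_j)|^2$. Your observation about the factor $\tfrac12$ absorbing the constant $2$ is exactly what the paper does implicitly in its third displayed line.
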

\begin{proof}
Observe
\begin{align*}
\int_{\R^d\times\R^d}\frac{1}{2}|x|^2 df_t& = \sum^N_{i=1}\frac{m_i}{2}|\gamma_i(t)|^2\\
  & = \sum^N_{i=1}\frac{m_i}{2}\left|x_i+\int^t_0\dot\gamma_i(s)ds\right|^2\\
  & \le  \sum^N_{i=1}m_i\left\{|x_i|^2+t\int^t_0|\dot\gamma(s)|^2ds\right\}\\
    & = \sum^N_{i=1}m_i|x_i|^2+t\int^t_0\sum^N_{i=1}m_i|\dot\gamma(s)|^2ds.
    \end{align*}
And in view of \eqref{PrimeEnergyEst2},
\begin{align}\label{IntegralBounddotGammaW}
\int^t_0\sum^N_{i=1}m_i|\dot\gamma_i(s)|^2ds\le \left[\sum^N_{i=1}m_i|v_i|^2+\sum^N_{i,j=1}m_im_j|DW(x_i-x_j)|^2\right]\chi(t).
\end{align}
Consequently, 
\begin{align}
\int_{\R^d\times\R^d}\frac{1}{2}|x|^2 df_t&\le  \sum^N_{i=1}m_i|x_i|^2 + \left[\sum^N_{i=1}m_i|v_i|^2+\sum^N_{i,j=1}m_im_j|DW(x_i-x_j)|^2\right]t\chi(t)\\
&=\int_{\R^d\times\R^d}|x|^2 df_0+\left[\int_{\R^d\times\R^d}
|v|^2 df_0+\int_{\R^d\times\R^d}|DW(x-y)|^2d\rho_0(x)d\rho_0(y) \right]t\chi(t).
\end{align}
\end{proof}

\subsection{Compactness}
Recall that every $f_0\in {\cal P}(\R^d\times \R^d)$ is a limit of a sequence of measures of the form \eqref{DiscreteFzero}. That is, convex combinations of Dirac measures are dense in ${\cal P}(\R^d\times \R^d)$. Therefore, we can choose a sequence $(f_0^k)_{k\in \N}\subset {\cal P}(\R^d\times \R^d)$ for which 
\be\label{FzeroSeq}
\begin{cases}
f^k_0\rightarrow f_0 \;\text{in}\; {\cal P}(\R^d\times \R^d)\\\\
f^k_0\;\text{is of the form \eqref{DiscreteFzero}}.
\end{cases}
\ee
We will additionally suppose that 
\be\label{InitJVE}
\int_{\R^d\times \R^d}(|x|^2+|v|^2)df_0<\infty
\ee
and choose the sequence of approximate initial conditions $(f_0^k)_{k\in \N}$ to satisfy \eqref{FzeroSeq} and 
\be\label{strongconvTimezero}
\lim_{k\rightarrow\infty}\int_{\R^d\times \R^d}(|x|^2+|v|^2)df^k_0=
\int_{\R^d\times \R^d}(|x|^2+|v|^2)df_0.
\ee
It is well known that this can be accomplished; we refer the reader to \cite{Bolley} for a short proof of this fact. 

\par By Proposition \ref{PropDiscreteVlasov}, we then have a sequence of weak solutions $(f^k)_{k\in \N}$ of the Jeans-Vlasov equation \eqref{JVeqn} with initial conditions $f^k|_{t=0}=f^k_0$ for $k\in \N$. Our goal is then to show that this sequence has a subsequence that converges to a solution $f$ of the Jean-Vlasov equation with initial condition $f_0$.  The following compactness lemma will gives us a candidate for a weak solution. 
\begin{lem}\label{CompactnessLemmaJV}
There is a subsequence $(f^{k_j})_{j\in \N}$ and $f:[0,\infty)\rightarrow {\cal P}(\R^d\times \R^d)$ such that 
$$
f_t^{k_j}\rightarrow f_t\;\textup{in}\;{\cal P}(\R^d\times \R^d)
$$
uniformly for $t$ belonging to compact subintervals of $[0,\infty)$.  Moreover, 
\be\label{W1convergence}
\lim_{j\rightarrow\infty}\int_{\R^d\times \R^d}\varphi df^{k_j}_t=
\int_{\R^d\times \R^d}\varphi df_t
\ee
for each $t\ge 0$ and continuous $\varphi:\R^d\times \R^d\rightarrow \R$ which satisfies 
\be\label{1growthPhi}
\sup_{x,v\in \R^d}\frac{|\varphi(x,v)|}{1+|x|+|v|}<\infty.
\ee
\end{lem}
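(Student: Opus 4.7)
The plan is an Arzel\`a--Ascoli-type argument in the narrowly metrized space $(\mathcal{P}(\R^d\times\R^d),\mathcal{d})$. First I will establish that $\{f^k_t:k\in\N,\;t\in[0,T]\}$ is narrowly precompact for each $T>0$; then I will show that the curves $t\mapsto f^k_t$ are uniformly Lipschitz in $\mathcal{d}$; finally a diagonal extraction over $T\in\N$ will produce the stated subsequence and limit. The uniform moment bound along the way will also be precisely what is needed to upgrade to the linear-growth convergence in \eqref{W1convergence}.

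For joint tightness, observe that \eqref{strongconvTimezero} bounds $\int(|x|^2+|v|^2)\,df^k_0$, and the linear-growth condition on $DW$ in \eqref{Wassump} together with the same bound yields a uniform bound on $\int|DW(x-y)|^2\,d\rho^k_0(x)\,d\rho^k_0(y)$. Propositions \ref{KineticEnergyEstJV} and \ref{XsquaredEstJV} then give
$$\sup_{k\in\N}\sup_{t\in[0,T]}\int_{\R^d\times\R^d}(|x|^2+|v|^2)\,df^k_t\le C(T)<\infty.$$
Since $\varphi(x,v)=|x|^2+|v|^2$ has compact sublevel sets in $\R^d\times\R^d$, the Prokhorov criterion \eqref{Prokhorov} delivers the desired precompactness.

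For equicontinuity, fix an $h:\R^d\times\R^d\to\R$ with $\|h\|_\infty\le 1$ and $\mathrm{Lip}(h)\le 1$, as in \eqref{NarrowMetric}. Standard mollification $h_\epsilon=h*\eta_\epsilon$ gives $h_\epsilon\in C^1$ with $\|h_\epsilon\|_\infty\le 1$, $|Dh_\epsilon|\le 1$, and $|h-h_\epsilon|\le\epsilon$. Because every $f^k_r$ is supported on a finite set, Remark \ref{NoNeedCompact} permits $h_\epsilon$ as a test function in \eqref{DistriCondJV}, giving
$$\left|\int h_\epsilon\,df^k_t-\int h_\epsilon\,df^k_s\right|\le\int_s^t\int|v|\,df^k_r\,dr+\int_s^t\int|DW*\rho^k_r|(x)\,d\rho^k_r(x)\,dr.$$
Cauchy--Schwarz and the second-moment bound control the first integrand by $C(T)^{1/2}$, while the linear growth of $DW$ in \eqref{Wassump} and the first-moment bound on $\rho^k_r$ bound the second integrand by $C(T)$. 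Sending $\epsilon\to 0$ (using $|h-h_\epsilon|\le\epsilon$ twice) and summing the geometric series in \eqref{NarrowMetric} yields $\mathcal{d}(f^k_t,f^k_s)\le C(T)|t-s|$ uniformly in $k$.

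Pointwise precompactness together with this uniform modulus of continuity gives, by Arzel\`a--Ascoli in $C([0,T],(\mathcal{P}(\R^d\times\R^d),\mathcal{d}))$ followed by a diagonal extraction over $T\in\N$, the subsequence and limit $f$ with uniform narrow convergence on compact subintervals. For \eqref{W1convergence}, the uniform second-moment bound together with Markov's inequality gives the uniform integrability of $1+|x|+|v|$ with respect to $(f^{k_j}_t)_{j\in\N}$, hence of $|\varphi|$ for any continuous $\varphi$ satisfying \eqref{1growthPhi}, so Lemma 5.1.7 of \cite{AGS} concludes. I expect the equicontinuity step to be the main technical obstacle, since the test functions appearing in $\mathcal{d}$ are only Lipschitz and need not be $C^1$ or compactly supported; the mollification argument combined with Remark \ref{NoNeedCompact} is the device that legitimately reduces the estimate to the distributional formulation \eqref{DistriCondJV}.
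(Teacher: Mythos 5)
Your proposal follows the paper's proof essentially line for line: the same moment bounds from Propositions \ref{KineticEnergyEstJV} and \ref{XsquaredEstJV} give tightness via Prokhorov, the same mollification of Lipschitz test functions combined with Remark \ref{NoNeedCompact} gives the uniform Lipschitz bound in the metric $\mathcal{d}$, Arzel\`a--Ascoli (with an implicit diagonal argument over compact intervals) produces the limit, and uniform integrability via Markov's inequality upgrades to linear-growth test functions. This matches the paper's argument; no gaps.
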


\begin{proof}
In view of the last line in \eqref{Wassump} and the limit \eqref{strongconvTimezero}, 
\be\label{DWsqIntFinite}
\sup_{k\in \N}\int_{\R^d}\int_{\R^d}|DW(x-y)|^2d\rho^k_0(x)d\rho^k_0(y)<\infty.
\ee
We also have  
\be\label{BasicMomentBoundxandv}
\sup_{0\le t\le T}\sup_{k\in \N}\int_{\R^d\times \R^d}(|x|^2+|v|^2)df^k_t<\infty
\ee
for each $T\ge 0$. This finiteness follows from \eqref{strongconvTimezero}, \eqref{DWsqIntFinite} and Propositions \ref{KineticEnergyEstJV} and \ref{XsquaredEstJV}. 

\par Moreover, there is a constant $C$ such that
\begin{align*}
 \int_{\R^d\times \R^d}|D W*\rho^k_t|df^k_t&= \int_{\R^d}\int_{\R^d}|DW(x-y)|d\rho^k_t(x)d\rho^k_t(y)\\
&\le \int_{\R^d}\int_{\R^d}C(1+|x|+|y|)d\rho^k_t(x)d\rho^k_t(y)\\
&\le C\left[1+ 2\int_{\R^d}|x|d\rho^k_t\right]\\
&= C\left[1+ 2\int_{\R^d\times \R^d}|x|df^k_t\right]\\
&\le C\left[1+ 2\left(\int_{\R^d\times \R^d}|x|^2df^k_t\right)^{1/2}\right].
\end{align*}
It then follows that 
$$
\sup_{0\le t\le T}\sup_{k\in \N} \int_{\R^d\times \R^d}|D W*\rho^k_t|df^k_t<\infty
$$
for each $T>0$. 

\par Note
\be
\int_{\R^d\times \R^d}\psi df^k_t-\int_{\R^d\times \R^d}\psi df^k_s=\int^t_s\int_{\R^d\times \R^d}\left(v\cdot D_x\psi -D W*\rho^k_\tau \cdot D_v\psi\right)df^k_\tau d\tau
\ee
for $0\le s\le t$ and $\psi\in C^1(\R^d\times \R^d)$; here we recall Remark \ref{NoNeedCompact}. Thus, for $0\le s\le t\le T$ there is a constant $C(T)>0$ such that
\begin{align}\label{LipPsiLipEst}
\left|\int_{\R^d\times \R^d}\psi df^k_t-\int_{\R^d\times \R^d}\psi df^k_s\right|&\le\text{Lip}(\psi)
\int^t_s\left(\int_{\R^d\times \R^d}(|v|+|D W*\rho^k_\tau|)df^k_\tau\right) d\tau\\
&\le\text{Lip}(\psi)C(T)(t-s)
\end{align}
for each $k\in\N$. This estimate actually holds for all Lipschitz continuous $\psi:\R^d\times \R^d\rightarrow \R$. Indeed, we can smooth such a $\psi$ with a mollifier, verify \eqref{LipPsiLipEst} with the mollification of $\psi$ and pass to the limit to discover that the inequality holds for $\psi$. We leave the details to the reader as they involve fairly standard computations.   

\par We conclude,  that for each $T>0$ 
$$
\sup_{k\in \N}\mathcal{d}(f^k_t,f^k_s)\le C(T)(t-s), \quad 0\le s\le t\le T.
$$
Here $\mathcal{d}$ is the metric defined in \eqref{NarrowMetric}. In particular, we recall that $\mathcal{d}$ metrizes the narrow topology on ${\cal P}(\R^d\times \R^d)$. We can also appeal to \eqref{BasicMomentBoundxandv} to conclude that 
 that $(f^k_t)_{k\in \N}$ is precompact in ${\cal P}(\R^d\times \R^d)$ for each $t\ge 0$. The Arzel\`a-Ascoli theorem then implies that there is a narrowly continuous $f:[0,\infty)\rightarrow {\cal P}(\R^d\times \R^d)$ and a subsequence $(f_t^{k_j})_{j\in \N}$ such that $f_t^{k_j}\rightarrow f_t$ in ${\cal P}(\R^d\times \R^d)$ uniformly for $t$ belonging to compact subintervals of $[0,\infty)$ as $j\rightarrow\infty$. 

 \par We now will verify \eqref{W1convergence}. Fix $t\ge 0$ and suppose $\varphi$ satisfies \eqref{1growthPhi}. Choose a constant $A$ so that $|\varphi(x,v)|\le A(|x|+|v|+1)$ and observe
\begin{align*}
\int_{|\varphi|\ge R}|\varphi| df^k_t&\le A \int_{A(|x|+|v|+1)\ge R}(|x|+|v|+1)df^k_t\\
&\le A\int_{A\sqrt{3}\sqrt{|x|^2+|v|^2+1}\ge R}(\sqrt{3}\sqrt{|x|^2+|v|^2+1})df^k_t\\
&\le\frac{3A^2}{R} \int_{\sqrt{3}\sqrt{|x|^2+|v|^2+1}\ge R}(|x|^2+|v|^2+1)df^k_t\\
&\le\frac{3A^2}{R} \int_{\R^d\times \R^d}(|x|^2+|v|^2+1)df^k_t.
\end{align*}
Therefore, 
$$
\lim_{R\rightarrow\infty}\int_{|\varphi|\ge R}|\varphi| df^k_t=0
$$
uniformly in $k\in \N$. As a result, $|\varphi|$ is uniformly integrable with respect to the narrowly convergence sequence $(f_t^{k_j})_{j\in \N}$ and \eqref{W1convergence} follows. 
\end{proof}
We are now in a position to verify the existence of solutions to the Jeans-Vlasov equation with a semiconvex potential. This result was first established by Ambrosio and Gangbo via a time discretization scheme \cite{MR2361303}; Kim also verified this result by using an inf-convolution regularization method \cite{MR3017033}.  Our approach is distinct in that it uses particle trajectories, although it applies to a smaller class of problems than the ones considered in \cite{MR2361303} and \cite{MR3017033}. We also mention the survey by Jabin \cite{MR3317577} which discusses mean field limits related to the Jeans-Vlasov equation.
\begin{thm}
Assume $f_0$ satisfies \eqref{InitJVE}. Any mapping $f: [0,\infty)\rightarrow{\cal P}(\R^d\times \R^d)$ as obtained in Lemma \ref{CompactnessLemmaJV} is a weak solution of the Jeans-Vlasov equation \eqref{JVeqn} which satisfies the initial condition \eqref{fatzero}.
\end{thm}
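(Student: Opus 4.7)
My plan is to fix $\psi \in C^1_c(\R^d\times\R^d)$ and pass to the limit along the subsequence $(f^{k_j})$ in the integrated formulation \eqref{DistriCondJV}; deducing the distributional form \eqref{WeakSolnCond} from \eqref{DistriCondJV} is then routine. Two of the three pieces are straightforward: $\int\psi\,df^{k_j}_t \to \int\psi\,df_t$ by the uniform narrow convergence of Lemma \ref{CompactnessLemmaJV}, $\int\psi\,df^{k_j}_0 \to \int\psi\,df_0$ by \eqref{FzeroSeq}, and the linear transport term $\int v\cdot D_x\psi\, df^{k_j}_s$ converges for every $s$ by narrow convergence, since $(x,v)\mapsto v\cdot D_x\psi(x,v)$ is continuous with compact support. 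A uniform bound in $s\in[0,T]$ and $k$ coming from \eqref{BasicMomentBoundxandv} then justifies dominated convergence in the $s$-integration.

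The main obstacle is the nonlinear convolution term, which I would rewrite as the double integral
\begin{equation*}
I_k(s) := \int_{\R^d\times\R^d}\!\!\int_{\R^d} DW(x-y)\cdot D_v\psi(x,v)\, d\rho^k_s(y)\, df^k_s(x,v).
\end{equation*}
Here both the integrating variable $y$ and the measure $\rho^k_s$ escape the compact support of $\psi$, and $DW$ is merely of linear growth. For each fixed $s$, the product measures $f^k_s\otimes\rho^k_s$ converge narrowly to $f_s\otimes\rho_s$ on $\R^d\times\R^d\times\R^d$ because both factors do. The integrand $g(x,v,y) := DW(x-y)\cdot D_v\psi(x,v)$ is continuous by \eqref{Wassump}. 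Since $D_v\psi$ vanishes outside a compact set $K\subset \R^d\times\R^d$, the linear growth hypothesis on $DW$ yields a bound of the form $|g(x,v,y)| \le \mathbf{1}_K(x,v)\, C(1+|y|)$. Uniform integrability of $1+|y|$ against $\rho^k_s$ follows from the uniform second moment estimate \eqref{BasicMomentBoundxandv}, so Lemma 5.1.7 of \cite{AGS} applied to the product measures gives $I_{k_j}(s)\to I(s)$ pointwise in $s$. A uniform-in-$(k,s\in[0,T])$ bound on $|I_k(s)|$ was essentially established inside the proof of Lemma \ref{CompactnessLemmaJV}, so a final dominated-convergence step upgrades this to $\int_0^t I_{k_j}(s)\,ds\to \int_0^t I(s)\,ds$.

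Combining the three limits in \eqref{DistriCondJV} written for $f^{k_j}$ shows that $f$ satisfies \eqref{DistriCondJV}, hence \eqref{WeakSolnCond} by standard integration in $t$ against $\partial_t\phi$ for $\phi\in C^1_c$. The initial condition \eqref{fatzero} is immediate: $f_0^{k_j}\to f_0$ narrowly by \eqref{FzeroSeq} and $f_0^{k_j}\to f|_{t=0}$ narrowly by the uniform convergence in Lemma \ref{CompactnessLemmaJV}, so uniqueness of narrow limits gives $f|_{t=0}=f_0$. The only subtle input throughout is the product-measure narrow convergence combined with a uniform integrability argument; the semiconvexity of $W$ enters indirectly, through Proposition \ref{ODENewtonProp} which produced the approximating particle trajectories and through the moment bounds of Propositions \ref{KineticEnergyEstJV}--\ref{XsquaredEstJV}.
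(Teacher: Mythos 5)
Your proof is correct and follows essentially the same approach as the paper: pass to the limit in the nonlinear term via narrow convergence of product measures together with uniform integrability of the linearly growing integrand (using the uniform second-moment bound), then dominated convergence in time. The only cosmetic differences are that you work with the integrated condition \eqref{DistriCondJV} rather than \eqref{WeakSolnCond} directly, and you decompose the convolution term via $f^k_s\otimes\rho^k_s$ on $\R^d\times\R^d\times\R^d$ whereas the paper uses $f^{k_j}_t\times f^{k_j}_t$ on $(\R^d\times\R^d)^2$.
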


\begin{proof}
Fix $\phi\in C^1_c(\R^d\times \R^d\times[0,\infty))$. We have
\be\label{weakSolnCondJay}
\int^\infty_0\int_{\R^d\times \R^d}\left(\partial_t\phi+v\cdot D_x\phi -D W*\rho^{k_j}_t \cdot D_v\phi\right)df^{k_j}_tdt+\int_{\R^d\times \R^d}\phi|_{t=0}df^{k_j}_0=0
\ee
for each $j\in \N$. We will argue that we can send $j\rightarrow \infty$ in this equation and replace $f^{k_j}$ with $f$, which will show that $f$ is the desired weak solution. 

\par First note
$$
\lim_{j\rightarrow\infty}\int_{\R^d\times \R^d}\phi|_{t=0}df^{k_j}_0=\int_{\R^d\times \R^d}\phi|_{t=0}df_0,
$$
by \eqref{FzeroSeq}. Since $f^{k_j}_t\rightarrow f_t$ narrowly for $t$ belonging to compact subintervals of $[0,\infty)$, we also have 
$$
\lim_{j\rightarrow\infty}\int^\infty_0\int_{\R^d\times \R^d}\left(\partial_t\phi+v\cdot D_x\phi\right) df^{k_j}_tdt
=\int^\infty_0\int_{\R^d\times \R^d}\left(\partial_t\phi+v\cdot D_x\phi\right)\phi df_tdt
$$
as the function $\partial_t\phi+v\cdot D_x\phi$ is bounded, continuous and compactly supported. 

\par  Next observe  
\begin{align*}
&\int^\infty_0\int_{\R^d\times \R^d}D W*\rho^{k_j}_t \cdot D_v\phi df^{k_j}_tdt\\
&\quad=\int^\infty_0\int_{\R^d\times \R^d}\left(\int_{\R^d}D W(x-y)\rho^{k_j}_t(y)\right) \cdot D_v\phi(x,v) df^{k_j}_t(x,v)dt\\
&\quad=\int^\infty_0\int_{\R^d\times \R^d}\int_{\R^d\times \R^d}\left[D W(x-y)\cdot D_v\phi(x,v) \right]df^{k_j}_t(x,v)df^{k_j}_t(y,w)dt.
\end{align*}
By assumption, $|D W(x-y)|\le C(1+|x|+|y|)$ for some $C\ge 0$. Therefore, 
\be\label{DWandDPhiIneq}
|D W(x-y)\cdot D_v\phi(x,v)|\le C\|D_v\phi\|_\infty(1+|x|+|y|)
\ee
for $x,y,v\in \R^d$. 

\par Note that $1+|x|+|y|$ is uniformly integrable with respect to  $(f^{k_j}_t\times f^{k_j}_t)_{j\in \N}$. Indeed, 
\begin{align*}
\iint_{1+|x|+|y|\ge R}(1+|x|+|y|)d(f^{k_j}_t\times f^{k_j}_t)&\le\frac{1}{R} \iint_{1+|x|+|y|\ge R}(1+|x|+|y|)^2d(f^{k_j}_t\times f^{k_j}_t)\\
 &\le\frac{3}{R} \iint_{1+|x|+|y|\ge R}(1+|x|^2+|y|^2)d(f^{k_j}_t\times f^{k_j}_t)\\
 &\le\frac{3}{R} \int_{(\R^d\times \R^d)^2}(1+|x|^2+|y|^2)d(f^{k_j}_t\times f^{k_j}_t) \\
 &\le\frac{3}{R} \left(1+2\int_{\R^d\times \R^d}|x|^2df^{k_j}_t \right)
\end{align*}
which goes to zero as $R\rightarrow\infty$ uniformly in $j$. The uniform integrability of $(x,y,v)\mapsto |D W(x-y)\cdot D_v\phi(x,v)|$ follows by \eqref{DWandDPhiIneq}.

\par As $f^{k_j}_t\rightarrow f_t$ in ${\cal P}(\R^d\times \R^d)$ for each $t\ge 0$, $f^{k_j}_t\times f^{k_j}_t \rightarrow f_t \times f_t$ in ${\cal P}((\R^d\times \R^d)^2)$ for each $t\ge 0$ (Theorem 2.8 in \cite{Billingsley}). Therefore,
$$
\lim_{j\rightarrow\infty}\int_{\R^d\times \R^d}D W*\rho^{k_j}_t \cdot D_v\phi df^{k_j}_t=
\int_{\R^d\times \R^d}D W*\rho_t \cdot D_v\phi df_t
$$
for each $t\ge 0$. Since the sequence of functions 
$$
[0,\infty)\ni t\mapsto \int_{\R^d\times \R^d}D W*\rho^{k_j}_t \cdot D_v\phi df^{k_j}_t
$$
are all supported in a common interval and can be bounded independently of $j\in \N$ by \eqref{BasicMomentBoundxandv}, we can apply dominated convergence to conclude
$$
\lim_{j\rightarrow\infty}\int^\infty_0\int_{\R^d\times \R^d}D W*\rho^{k_j}_t \cdot D_v\phi df^{k_j}_tdt
=\int^\infty_0\int_{\R^d\times \R^d}D W*\rho_t \cdot D_v\phi df_tdt.
$$
We finally send $j\rightarrow\infty$ in \eqref{weakSolnCondJay} to deduce that is $f$ is a weak solution of the Jeans-Vlasov equation with $f|_{t=0}=f_0$.  
\end{proof}

\subsection{Quadratic interaction potentials}
A typical family of semiconvex interaction potentials is 
\be
W(z)=\frac{\kappa}{2}|z|^2,
\ee
where $\kappa\in \R$. It turns out that it is easy to write down an explicit solution for each member of this family. To this end, 
we let $f_0\in {\cal P}(\R^d\times \R^d)$ and make use of the projection maps
$$
\pi^1:\R^d\times \R^d\rightarrow\R^d; (x,v)\mapsto x
$$
and 
$$
\pi^2:\R^d\times \R^d\rightarrow\R^d; (x,v)\mapsto v.
$$
\par If $\kappa>0$, we set 
$$
X(x,v,t)=\left(x-\int_{\R^d\times\R^d}\pi^1df_0\right)\cos(\sqrt{\kappa}t)+\left(v-\int_{\R^d\times\R^d}\pi^2df_0\right)\frac{1}{\sqrt{\kappa}}\sin(\sqrt{\kappa}t)+\int_{\R^d\times \R^d}(\pi^1+t\pi^2)df_0.
$$
for $(x,v,t)\in \R^d\times \R^d\times[0,\infty)$.  For each $t\ge 0$, we then define $f_t\in {\cal P}(\R^d\times \R^d)$ via the formula:  
$$
\int_{\R^d\times \R^d}\varphi df_t=\int_{\R^d\times\R^d}\varphi(X(x,v,t),\partial_tX(x,v,t))df_0(x,v)
$$
for $\varphi\in C_b(\R^d\times \R^d)$.  It is routine to check that 
$$
\partial_t^2X(x,v,t)=-[DW*\rho_t](X(x,v,t))
$$
and in particular that $f: [0,\infty)\rightarrow {\cal P}(\R^d\times \R^d); t\mapsto f_t$ is a weak solution of the Jeans-Vlasov equation
with $f|_{t=0}=f_0$. 

\par We can reason similarly when $\kappa<0$. Indeed, we could repeat the process above to build a weak solution via the map 
\begin{align*}
&X(x,v,t)=\\
&\left(x-\int_{\R^d\times\R^d}\pi^1df_0\right)\cosh(\sqrt{-\kappa}t)+\left(v-\int_{\R^d\times\R^d}\pi^2df_0\right)\frac{1}{\sqrt{\kappa}}\sinh(\sqrt{-\kappa}t)+\int_{\R^d\times \R^d}(\pi^1+t\pi^2)df_0.
\end{align*}
Finally, when $\kappa=0$ we can argue as above with the map
$$
X(x,v,t)=x+tv.
$$

\section{Pressureless Euler equations}\label{PEsect}
We now turn our attention to the pressureless Euler equations in one spatial dimension
\be\label{PressLessEuler}
\begin{cases}
\hspace{.27in}\partial_t\rho+\partial_x(\rho v)=0\\
\partial_t(\rho v)+\partial_x(\rho v^2)=-\rho(W'*\rho),
\end{cases}
\ee
which hold in $\R\times (0,\infty)$. These equations model the dynamics of a collection of 
particles restricted move on a line that interact pairwise via a potential $W:\R\rightarrow \R$ and 
undergo perfectly inelastic collisions once they collide. In particular, after particles collide, they 
remain stuck together. We note that the first equation in \eqref{PressLessEuler} states the 
conservation of mass and the second equation asserts conservation of momentum.
The unknowns are the distribution of particles $\rho$ and the corresponding velocity field $v$.  

\par Our goal is to verify the existence of a weak solution pair for given conditions
\be\label{Init}
\rho|_{t=0}=\rho_0\quad \text{and}\quad v|_{t=0}=v_0.
\ee
As with the Jeans-Vlasov equation, it will be natural for us to work with the space ${\cal P}(\R)$ of Borel probability 
measures on $\R$.   For convenience, we will always suppose that $\rho_0\in {\cal P}(\R)$ has finite second moment
\be\label{InitRhoZeroVeeZero}
\int_{\R}x^2d\rho_0(x)<\infty
\ee
and that
\be
v_0: \R\rightarrow \R\;\text{is absolutely continuous}.
\ee

\begin{defn}\label{WeakSolnDefn}
A narrowly continuous $\rho: [0,\infty)\rightarrow {\cal P}(\R); t\mapsto \rho_t$ and a Borel measurable
$v:\R\times[0,\infty)\rightarrow\R$ is a {\it weak solution pair of \eqref{PressLessEuler}
which satisfies the initial conditions \eqref{Init}} if for each $\phi\in C^\infty_c(\R\times[0,\infty))$,
$$
\int^\infty_0\int_{\R}(\partial_t\phi+v\partial_x\phi)d\rho_tdt+\int_{\R}\phi(\cdot,0)d\rho_0=0
$$
and
$$
\int^\infty_0\int_{\R}(v\partial_t\phi +v^2\partial_x\phi)d\rho_tdt+\int_{\R}\phi(\cdot,0)v_0d\rho_0=\int^\infty_0\int_{\R}\phi(W'*\rho_t)d\rho_tdt.
$$
\end{defn}

\par We will also assume that $W$ satisfies \eqref{Wassump} and in particular that 
$$
\R\ni x\mapsto W(x)+\frac{L}{2}x^2
$$
is convex for some $L>0$. We acknowledge that we already proved the following existence theorem in a recent preprint \cite{Hynd}. Nevertheless, we have included this material in this paper to illustrate how it fits into the bigger picture of evolution equations from physics with semiconvex potentials. We hope to provide just enough details so the reader has a good understanding of the main ideas.

\begin{thm}\label{EPthm} There is a weak solution pair $\rho$ and $v$ of \eqref{PressLessEuler} which satisfy the initial conditions \eqref{Init}. Moreover, 
$$
\int_{\R}\frac{1}{2}v(x,t)^2d\rho_t(x)+\iint_{\R^2}\frac{1}{2}W(x-y)d\rho_t(x)d\rho_t(y)\le \int_{\R}\frac{1}{2}v(x,s)^2d\rho_s(x)+\iint_{\R^2}\frac{1}{2}W(x-y)d\rho_s(x)d\rho_s(y)
$$
for Lebesgue almost every $0\le s\le t$ and 
\be\label{entropy}
(v(x,t)-v(y,t))(x-y)\le\frac{\sqrt{L}}{\tanh(\sqrt{L}t)}(x-y)^2
\ee
for $\rho_t$ almost every $x,y\in \R$ and Lebesgue almost every $t>0$. 
\end{thm}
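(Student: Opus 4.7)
The plan is to adapt the sticky particle method developed in the author's earlier work \cite{Hynd}. First, approximate $\rho_0$ by ordered empirical measures $\rho_0^N = \sum_{i=1}^N m_i^N \delta_{x_i^N}$ with $x_1^N < \cdots < x_N^N$, chosen so that $\rho_0^N \to \rho_0$ narrowly with convergence of second moments, and set $v_i^N := v_0(x_i^N)$. Between collisions, the trajectories $\gamma_i^N(t)$ solve $\ddot\gamma_i = -(W' * \rho_t^N)(\gamma_i)$ with $\rho_t^N := \sum_i m_i^N \delta_{\gamma_i^N(t)}$; existence is guaranteed by Proposition \ref{ODENewtonProp} applied to the potential $V(x) = \tfrac{1}{2}\sum_{i,j} m_i^N m_j^N W(x_i - x_j)$, which is semiconvex by \eqref{VVlasovSemiCon}. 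Whenever particles collide, they merge instantaneously into a single cluster whose post-collision velocity is determined by conservation of momentum, and the trajectories extend continuously to $[0,\infty)$.

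The key technical step is a discrete analogue of the entropy inequality \eqref{entropy}. For two clusters with positions $\gamma_i^N < \gamma_j^N$, set $\eta := \gamma_j^N - \gamma_i^N > 0$. Because $W(z) + \tfrac{L}{2}z^2$ is convex, $W'(a)-W'(b) \ge -L(a-b)$ whenever $a \ge b$, so integrating against the probability measure $\rho_t^N$ yields
\[
\ddot\eta \;=\; -\int_\R \bigl(W'(\gamma_j^N - y) - W'(\gamma_i^N - y)\bigr)\, d\rho_t^N(y) \;\le\; L\eta
\]
between collisions. Setting $\phi := \dot\eta/\eta$ then produces the Riccati-type differential inequality $\dot\phi \le L - \phi^2$, and comparison with the maximal solution $\Psi(t) := \sqrt{L}\coth(\sqrt{L}t)$ (which blows up as $t \to 0^+$) gives
\[
\dot\gamma_j^N(t) - \dot\gamma_i^N(t) \;\le\; \frac{\sqrt{L}}{\tanh(\sqrt{L}t)}\bigl(\gamma_j^N(t) - \gamma_i^N(t)\bigr).
\]
This bound survives sticky collisions, because a merged cluster moves with the mass-weighted average of the colliding velocities, which can only decrease velocity differences between surviving clusters.

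With $v^N(\gamma_i^N(t),t) := \dot\gamma_i^N(t)$, the pair $(\rho^N, v^N)$ is a distributional solution of \eqref{PressLessEuler} at the discrete level. Uniform second-moment and kinetic-energy bounds analogous to Propositions \ref{KineticEnergyEstJV} and \ref{XsquaredEstJV}, together with Lipschitz-in-time continuity in the narrow metric, provide precompactness of $\{\rho^N\}$ uniformly on compact time intervals in the spirit of Lemma \ref{CompactnessLemmaJV}, extracting a narrowly continuous limit $\rho:[0,\infty)\to {\cal P}(\R)$ along a subsequence. The discrete one-sided Lipschitz estimate passes to the limit via uniform integrability of $(x-y)^2$ against $\rho_t^N \otimes \rho_t^N$, yielding both \eqref{entropy} in the limit and enough spatial regularity to identify $v$ as a $\rho_t$-almost-everywhere defined limit velocity.

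The main obstacle is passing to the limit in the nonlinear term $\rho^N (v^N)^2$ in the momentum equation, since narrow convergence of $\rho^N$ alone is not enough. This is precisely where the Oleinik-type bound \eqref{entropy} is indispensable: it prevents particles from crossing and hence prevents oscillations of $v^N$ from concentrating on ever finer scales, so the momentum and kinetic-energy measures admit clean weak limits identifiable with $\rho v$ and $\rho v^2$. The energy inequality in the statement then follows in the limit because sticky collisions only dissipate kinetic energy at the discrete level, and narrow limits of such dissipative approximations retain a one-sided bound rather than an equality.
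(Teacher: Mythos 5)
Your approach is genuinely different from the paper's. The paper (following \cite{Hynd}) works in \emph{Lagrangian} coordinates: it constructs the flow maps $X^k:[0,\infty)\to L^2(\rho^k_0)$ from the sticky particle trajectories, proves a strong $L^2(\rho_0)$-type compactness for them (Proposition \ref{StrongCompactXkayjay}, cited from \cite{Hynd}), and then converts the limit $X$ into a weak solution pair via Lemma \ref{solnPElemma}. The entropy inequality \eqref{entropy} is then a one-line differentiation of the $\sinh$-quotient monotonicity from Proposition \ref{StrongCompactXkayjay}(iv). You instead stay entirely \emph{Eulerian} and re-derive the discrete Oleinik bound from scratch via a Riccati comparison. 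That part of your argument is correct and is in fact a clean way to recover \eqref{QSPP}: since $W'+L\,\mathrm{id}$ is nondecreasing, $\ddot\eta\le L\eta$ between collisions, $\phi=\dot\eta/\eta$ satisfies $\dot\phi\le L-\phi^2$, and comparison with $\sqrt L\coth(\sqrt L\,t)$ (which blows up at $0^+$) gives the bound; your convex-combination argument at collisions preserving it is also right.

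The genuine gap is in your final paragraph. You identify the central difficulty — passing to the limit in $\rho^N (v^N)^2$ — but then dispose of it with ``the Oleinik bound prevents oscillations of $v^N$ from concentrating on ever finer scales, so the momentum and kinetic-energy measures admit clean weak limits identifiable with $\rho v$ and $\rho v^2$.'' Narrow convergence of $\rho^N_t$ plus a uniform one-sided Lipschitz bound on $v^N$ does not immediately give a single Borel $v$ with $\rho^N v^N\rightharpoonup \rho v$ and $\rho^N (v^N)^2\rightharpoonup \rho v^2$; in the Eulerian sticky-particle literature this upgrade is a nontrivial compensated-compactness / BV-structure argument and is precisely where the heavy lifting lies. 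You also leave the construction of the limiting velocity $v$ vague (``enough spatial regularity to identify $v$''), whereas in the Lagrangian picture $v$ is built explicitly from $\dot X(t)=v(X(t),t)$ in Proposition \ref{StrongCompactXkayjay}(v). In short, your discrete-level estimates are sound and arguably more self-contained than the paper's citations, but the Eulerian limit passage — the step the Lagrangian formulation was designed to avoid — is asserted rather than proved, so the proposal as written does not close the argument.
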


\par We also mention that this theorem complements the fundamental existence results for equations which arise when studying sticky particle dynamics with interactions such as  \cite{BreGan, ERykovSinai}. While our discussion here doesn't include the prototypical nonsmooth potential $W(x)=|x|$, we note that such potentials can be considered by fairly standard approximation arguments (see section 5 of \cite{Hynd}).  Other notable works along these lines include \cite{BreGre,MR3296602,Dermoune,GNT,Guo,MR4007615,Jin,LeFloch,NatSav,NguTud,Shen}.

\subsection{Lagrangian coordinates}
Our approach to solving the pressureless Euler equations for given initial conditions is to find an absolutely continuous $X: [0,\infty)\rightarrow L^2(\rho_0)$ 
which satisfies
\be\label{FlowMapEqn}
\dot X(t)=\E_{\rho_0}\left[v_0-\displaystyle\int^t_0(W'*\rho_s)(X(s))ds\bigg| X(t)\right], \quad a.e.\;t\ge 0
\ee
and
\be\label{xInit}
X(0)=\id.
\ee
Here the Borel probability measure
\be\label{PushForwardMeasure}
\rho_t:=X(t)_{\#}\rho_0, \quad t\ge 0
\ee
is defined through the formula 
$$
\int_{\R}hd\rho_t=\int_{\R}h\circ X(t)d\rho_0
$$
for $h\in C_b(\R)$. 

\par The expression $\E_{\rho_0}[ g |X(t)]$ represents the usual conditional expectation of a Borel $g:\R\rightarrow \R$ given $X(t)$.  In particular, $X$ solves \eqref{FlowMapEqn} if there is a Borel $v: \R\times[0,\infty)\rightarrow\R$ for which 
\be\label{xODEv}
\dot X(t)=v(X(t),t)
\ee
and if
$$
\int_{\R}h(X(t))v(X(t),t)d\rho_0=\int_{\R}h(X(t))\left[v_0-\displaystyle\int^t_0(W'*\rho_s)(X(s))ds\right]d\rho_0
$$ 
for almost every $t\ge 0$ and each $h\in C_b(\R)$.  It is an elementary exercise to verify the following lemma. We leave the details to the reader. 

\begin{lem}\label{solnPElemma}
Suppose $X: [0,\infty)\rightarrow L^2(\rho_0)$ is absolutely continuous and satisfies \eqref{FlowMapEqn} and \eqref{xInit}. Define $\rho: [0,\infty)\rightarrow {\cal P}(\R); t\mapsto X(t){_\#}\rho_0$ and select a Borel $v: \R\times[0,\infty)\rightarrow\R$ for which \eqref{xODEv} holds for almost everywhere $t\ge 0$. Then $\rho$ and $v$ is a weak solution pair of \eqref{PressLessEuler} which satisfies the initial conditions \eqref{Init}.
\end{lem}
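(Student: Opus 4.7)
The plan is to reduce both weak identities in Definition \ref{WeakSolnDefn} to one-dimensional calculus applied to the $L^2(\rho_0)$--absolutely continuous curve $t \mapsto X(t)$. Narrow continuity of $t \mapsto \rho_t = X(t)_{\#}\rho_0$ is immediate: if $t_n \to t$, then $X(t_n) \to X(t)$ in $L^2(\rho_0)$, hence in $\rho_0$-probability, so for every $h \in C_b(\R)$ bounded convergence gives $\int_\R h\,d\rho_{t_n} = \int_\R h(X(t_n))\,d\rho_0 \to \int_\R h(X(t))\,d\rho_0 = \int_\R h\,d\rho_t$.

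For the continuity equation, fix $\phi \in C_c^\infty(\R\times[0,\infty))$. The pushforward definition of $\rho_t$ together with \eqref{xODEv} yields, for almost every $t$,
$$\int_\R(\partial_t\phi+v\,\partial_x\phi)\,d\rho_t \;=\; \int_\R\bigl[\partial_t\phi(X(t),t)+\dot X(t)\,\partial_x\phi(X(t),t)\bigr]d\rho_0 \;=\; \int_\R \frac{d}{dt}\phi(X(t),t)\,d\rho_0.$$
Integrating in $t$ and swapping the two integrals (the integrand is uniformly bounded and supported on a compact $t$-interval), the fundamental theorem of calculus combined with $X(0)=\id$ delivers $-\int_\R \phi(\cdot,0)\,d\rho_0$, which is exactly the desired identity.

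For the momentum equation, introduce $w(t):=v_0-\int_0^t(W'*\rho_s)(X(s))\,ds$, so that \eqref{FlowMapEqn} reads $\dot X(t)=\E_{\rho_0}[w(t)\mid X(t)]$. The same pushforward manipulation gives
$$\int_\R(v\,\partial_t\phi+v^2\,\partial_x\phi)\,d\rho_t \;=\; \int_\R \dot X(t)\,\frac{d}{dt}\phi(X(t),t)\,d\rho_0.$$
Because $\frac{d}{dt}\phi(X(t),t)$ is $\sigma(X(t))$-measurable, the tower property lets me replace $\dot X(t)$ by $w(t)$ inside the integral. Integrating in $t$ and performing a temporal integration by parts, the boundary term at $t=0$ produces $-\int_\R v_0\,\phi(\cdot,0)\,d\rho_0$ (the boundary contribution at $t=\infty$ vanishes by compact support of $\phi$), while $\dot w(t)=-(W'*\rho_t)(X(t))$ unwinds via the pushforward to $\int_0^\infty\!\!\int_\R \phi\,(W'*\rho_t)\,d\rho_t\,dt$. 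Rearranging produces the momentum identity.

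The main technical obstacle is justifying the temporal integration by parts and the Fubini interchange against the limited regularity of $X$. To handle this I would invoke the linear growth assumption in \eqref{Wassump} to bound $|(W'*\rho_s)(X(s))| \le C(1+|X(s)|)$, which lies in $L^2(\rho_0)$ uniformly on compact $t$-intervals since $\|X(s)\|_{L^2(\rho_0)}\le\|\id\|_{L^2(\rho_0)}+\int_0^s\|\dot X(\tau)\|_{L^2(\rho_0)}d\tau$. This makes $w$ an $L^2(\rho_0)$--absolutely continuous curve with the stated derivative and legitimizes the conditional-expectation substitution, the Fubini swap, and the integration by parts. The remaining manipulations are routine.
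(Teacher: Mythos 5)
The paper gives no proof of this lemma; it simply says ``It is an elementary exercise to verify the following lemma. We leave the details to the reader,'' so there is nothing in the paper to compare your argument against.

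Your proposal is correct and is the natural filling-in of that exercise: push forward to $\rho_0$, apply the chain rule to $t\mapsto\phi(X(t),t)$, use the tower property of conditional expectation against the $\sigma(X(t))$-measurable integrand, and integrate by parts against $w(t)=v_0-\int_0^t(W'*\rho_s)(X(s))\,ds$. The one point worth stating more explicitly than you do is the sense in which expressions such as $\frac{d}{dt}\phi(X(t),t)$ are to be read. Absolute continuity of $X:[0,\infty)\to L^2(\rho_0)$ does \emph{not} give pathwise absolute continuity of $t\mapsto X(t)(\omega)$ for $\rho_0$-a.e.\ $\omega$, so the derivative should be taken of the Banach-valued curve $\Phi(t):=\phi(X(t),t)\in L^2(\rho_0)$, not $\omega$ by $\omega$. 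Since $\phi\in C^\infty_c$, one has $\|\Phi(t)-\Phi(s)\|_{L^2(\rho_0)}\le \mathrm{Lip}(\phi)\bigl(\|X(t)-X(s)\|_{L^2(\rho_0)}+|t-s|\bigr)$, so $\Phi$ is absolutely continuous with $\dot\Phi(t)=\partial_t\phi(X(t),t)+\dot X(t)\,\partial_x\phi(X(t),t)$ in $L^2(\rho_0)$ for a.e.\ $t$. Likewise $w$ is an absolutely continuous $L^2(\rho_0)$ curve by the linear-growth bound you invoke, so the scalar function $t\mapsto\langle w(t),\Phi(t)\rangle_{L^2(\rho_0)}$ is absolutely continuous with derivative $\langle\dot w,\Phi\rangle+\langle w,\dot\Phi\rangle$, which legitimizes the integration by parts. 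With this reading your two displayed identities, the boundary evaluation at $t=0$ via $X(0)=\mathrm{id}_\R$ and $w(0)=v_0$, and the final pushforward of $(W'*\rho_t)(X(t))$ all go through; the argument is sound.
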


\par As a result, in order to design a solution of the pressureless Euler equations for given initial conditions, we only need to find an absolutely continuous $X: [0,\infty)\rightarrow L^2(\rho_0)$ which satisfies \eqref{FlowMapEqn} and \eqref{xInit}. We will do so by generating a solution when $\rho_0$ is a convex combination of Dirac 
measures and then showing how this can be extended to a general $\rho_0$ by a compactness argument.  The key to our compactness will of course be in exploiting the semiconvexity of $W$.

\subsection{Sticky particle trajectories}
 Let us initially suppose that 
 \be\label{ConvCombDirac}
 \rho_0=\sum^N_{i=1}m_i\delta_{x_i},
 \ee
 where $x_1,\dots, x_N$ are distinct and $m_1,\dots, m_N>0$ with $\sum^N_{i=1}m_i=1$.  Here $x_i$ represents the initial position of the particle with mass $m_i$. We will construct a solution of  \eqref{FlowMapEqn} with trajectories $\gamma_1,\dots, \gamma_N:[0,\infty)\rightarrow \R$ that evolve in time according to Newton's second law 
\be\label{NewtonSystem}
\ddot \gamma_i(t)=-\sum^N_{j=1}m_jW'(\gamma_i(t)-\gamma_j(t))
\ee
on any subinterval of $(0,\infty)$ where there is not a collision between particles.  When particles do collide, they experience perfectly inelastic collisions.  For example, if the subcollection of particles with masses $m_1,\dots, m_k$ collide at time $s>0$, they merge to form a single particle of mass $m_1+\dots + m_k$ and 
$$
m_1\dot\gamma_1(s-)+\dots +m_k\dot\gamma_k(s-)=(m_1+\dots +m_k)\dot\gamma_i(s+)
$$
for $i=1,\dots, k$.    In particular, the paths $\gamma_1,\dots, \gamma_k$ all agree after time $s$; see Figure \ref{fourMass}. These paths are known as {\it sticky particle trajectories} and they satisfy the following basic properties.

\begin{figure}[h]
\centering
 \includegraphics[width=.75\textwidth]{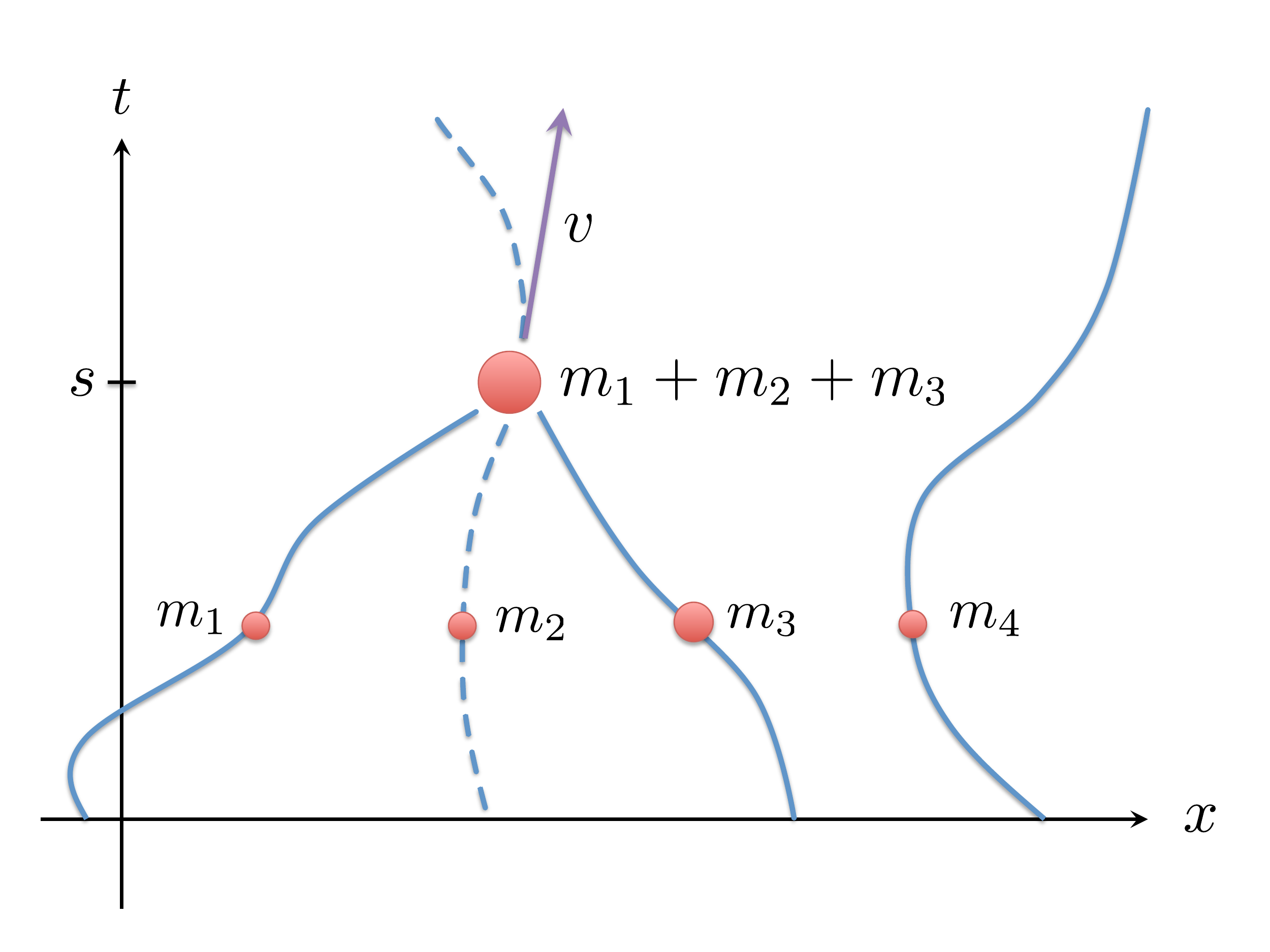}
 \caption{Schematic of the sticky particle trajectories $\gamma_1,\gamma_2, \gamma_3$ and $\gamma_4$ with respective point masses $m_1, m_2,m_3,$ and $m_4$. Trajectory $\gamma_2$ is shown in dashed and the masses are displayed larger than points to emphasize that they are allowed to be distinct. Note that the point masses $m_1, m_2$ and $m_3$ undergo a perfectly inelastic collision at time $s$.  Here the slope $v$ satisfies $ m_1\dot\gamma_1(s-)+m_2\dot\gamma_2(s-) +m_3\dot\gamma_3(s-)=(m_1+m_2 +m_3)v$, in accordance with the rules of perfectly inelastic collisions. }\label{fourMass}
\end{figure}

\begin{prop}\label{StickyParticlesExist}
 There are piecewise 
$C^2$ paths 
$$
\gamma_1,\dots,\gamma_N : [0,\infty)\rightarrow \R
$$
with the following properties. \\
(i) For $i=1,\dots, N$ and all but finitely many $t\in (0,\infty)$, \eqref{NewtonSystem} holds. 
\\
(ii) For $i=1,\dots, N$,
$$
\gamma_i(0)=x_i\quad \text{and}\quad \dot\gamma_i(0)=v_0(x_i).
$$
(iii) For $i,j=1,\dots, N$, $0\le s\le t$ and $\gamma_i(s)=\gamma_j(s)$ imply 
$$
\gamma_i(t)=\gamma_j(t).
$$
(iv) If $t>0$, $\{i_1,\dots, i_k\}\subset\{1,\dots, N\}$, and
$$
\gamma_{i_1}(t)=\dots=\gamma_{i_k}(t)\neq \gamma_i(t)
$$
for $i\not\in\{i_1,\dots, i_k\}$, then
$$
\dot\gamma_{i_j}(t+)=\frac{m_{i_1}\dot\gamma_{i_1}(t-)+\dots+m_{i_k}\dot\gamma_{i_k}(t-)}{m_{i_1}+\dots+m_{i_k}}
$$
for $j=1,\dots, k$.
\end{prop}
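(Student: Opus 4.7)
My plan is to construct the trajectories inductively, peeling off one collision event at a time. At stage zero there are $N$ distinct particles with initial positions $x_1,\ldots,x_N$ and velocities $v_0(x_1),\ldots,v_0(x_N)$, and the potential
$$
V(y_1,\ldots,y_N)=\frac{1}{2}\sum_{i,j=1}^{N}m_i m_j W(y_i-y_j)
$$
is $C^{1}$ and semiconvex by exactly the calculation \eqref{VVlasovSemiCon}. Proposition \ref{ODENewtonProp} then produces a $C^{2}$ solution $\gamma_1,\ldots,\gamma_N:[0,\infty)\to\R$ of the Newton system \eqref{NewtonSystem} with the prescribed initial data. Define the first collision time
$$
s_1:=\inf\{t>0:\gamma_i(t)=\gamma_j(t)\text{ for some }i\neq j\}.
$$
Since the initial positions are distinct and the trajectories are continuous, either $s_1=\infty$ (and we are finished) or $s_1>0$ and the infimum is attained by continuity.

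At time $s_1$ I partition the indices into the equivalence classes of the relation $i\sim j\iff \gamma_i(s_1)=\gamma_j(s_1)$, calling these classes $I_1,\ldots,I_M$ with totals $M_g=\sum_{i\in I_g}m_i$ and common position $y_g:=\gamma_i(s_1)$ for $i\in I_g$. For each class $I_g$ of size $\ge 2$ I assign the new velocity
$$
w_g:=\frac{1}{M_g}\sum_{i\in I_g}m_i\dot\gamma_i(s_1-),
$$
while classes of size one keep their incoming velocity $\dot\gamma_i(s_1-)$. I then set up a \emph{reduced} Newton system of $M<N$ particles with masses $M_g$, positions $y_g$, velocities $w_g$, and potential $\widetilde V(y)=\tfrac{1}{2}\sum_{g,h}M_g M_h W(y_g-y_h)$; this $\widetilde V$ is again $C^{1}$ and semiconvex, so Proposition \ref{ODENewtonProp} furnishes $C^{2}$ trajectories $\widetilde\gamma_1,\ldots,\widetilde\gamma_M$ on $[s_1,\infty)$. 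I \emph{define} the continuation of each $\gamma_i$ for $t\ge s_1$ to equal $\widetilde\gamma_g$ whenever $i\in I_g$, which automatically gives property (iii) from time $s_1$ onward and property (iv) at $t=s_1$.

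Iterating this construction, the number of distinct clusters strictly decreases at each collision, so after at most $N-1$ steps the collision set is exhausted and the $\gamma_i$ are defined on all of $[0,\infty)$, piecewise $C^{2}$, with at most finitely many breakpoints. Property (ii) is the stage-zero initial condition, and (iii) and (iv) follow from the merging rule. For property (i), on any collision-free subinterval the trajectories solve the reduced Newton system at that stage; writing the right-hand side of \eqref{NewtonSystem} as a sum over clusters and using that all members of cluster $I_g$ share the position $\widetilde\gamma_g(t)$, one gets
$$
-\sum_{j=1}^{N}m_j W'(\gamma_i(t)-\gamma_j(t)) = -\sum_{g'}M_{g'}W'(\widetilde\gamma_g(t)-\widetilde\gamma_{g'}(t)),
$$
where the $g'=g$ contribution vanishes because $W$ is even and $C^{1}$, so $W'(0)=0$; this is precisely the acceleration of $\widetilde\gamma_g$ in the reduced system, confirming (i).

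The main obstacle is the bookkeeping across collisions: one must check that the semiconvex structure is preserved under merging (it is, by the same identity as \eqref{VVlasovSemiCon} applied with the new masses), that the reduced equation of motion is consistent with the original one via the $W'(0)=0$ cancellation described above, and that no infinite accumulation of collisions can occur (it cannot, because each collision strictly reduces the number of clusters and the process therefore halts within $N-1$ steps). All the analytical work, namely the global existence of each $C^{2}$ segment, is packaged into a single appeal to Proposition \ref{ODENewtonProp}; the rest is combinatorial.
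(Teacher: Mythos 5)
Your proof is correct and takes essentially the same approach as the paper: solve the Newton system via Proposition \ref{ODENewtonProp}, stop at the first collision time, merge the colliding particles via momentum conservation, and continue with the reduced system of fewer particles. The paper packages this as a formal induction on $N$ and treats a single colliding subcollection (deferring the simultaneous-collision case as ``straightforward''), whereas you unroll the induction as an explicit iteration bounded by $N-1$ steps and handle all equivalence classes of coincident particles at once; your explicit check of property (i) via the identity $\sum_{j} m_j W'(\gamma_i-\gamma_j)=\sum_{g'}M_{g'}W'(\widetilde\gamma_g-\widetilde\gamma_{g'})$ and the preservation of semiconvexity under merging are exactly the ``routine'' verifications the paper leaves to the reader.
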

\begin{proof}
We argue by induction on $N$.  The assertion is trivial to verify for $N=1$ as there are no collisions 
and the lone trajectory is linear $\gamma_1(t)=x_1+t v_1$.  When $N>1$, we can solve the ODE system \eqref{NewtonSystem} for the given 
initial conditions in $(ii)$ and obtain trajectories $\xi_1, \dots,\xi_N\in C^2([0,\infty);\R)$; this follows from Proposition \ref{ODENewtonProp}. If these trajectories do not intersect, 
we set $\gamma_i=\xi_i$ for $i=1,\dots, N$ and conclude. If they do intersect for the first time at $s>0$, we can use the induction hypothesis. 

\par To this end, let us suppose initially that only one subcollection of these trajectories intersect first time at $s$. That is, there is a subset $\{i_1,\dots, i_k\}\subset\{1,\dots, N\}$ such that
$$
x:=\xi_{i_1}(s)=\dots=\xi_{i_k}(s)\neq \xi_i(s)
$$
for $i\not\in\{i_1,\dots, i_k\}$.  We also define
$$
v:=\frac{m_{i_1}\dot\xi_{i_1}(s-)+\dots+m_{i_k}\dot\xi_{i_k}(s-)}{m_{i_1}+\dots+m_{i_k}}.
$$

\par Observe that there are $N-k+1$ distinct positions $\{\xi_i(s)\}_{i\neq i_j}$ and $x$ at time $s$; there are also the  velocities  
$\{\dot\xi_i(s)\}_{i\neq i_j}$ and $v$ and masses $\{m_i\}_{i\neq i_j}$ and $m_{i_1}+\dots+m_{i_k}$ which correspond to these positions at time $s$. By our induction hypothesis, there are $N-k+1$ sticky particle trajectories $\{\zeta_i\}_{i\neq i_j}$ and $\zeta$ with these respective initial positions, initial velocities and masses. We can then set
$$
\gamma_i(t)=
\begin{cases}
\xi_i(t), \quad &0\le t\le s\\
\zeta_i(t-s), \quad & s\le t\le \infty
\end{cases}
$$
for $i\neq i_j$ and 
$$
\gamma_{i_j}(t)=
\begin{cases}
\xi_{i_j}(t), \quad &0\le t\le s\\
\zeta(t-s), \quad & s\le t\le \infty
\end{cases}
$$
for $j=1,\dots, k$.  It is routine to check that $\gamma_1\dots,\gamma_N$ are piecewise $C^2$ and satisfy $(i)-(iv)$. Moreover, it is straighforward to generalize this argument to the case where there are more than one subcollection of trajectories that intersect for the first at $s$. 
\end{proof}

\par Perhaps the most subtle property of sticky particle trajectories is the {\it averaging property}. This feature follows from Proposition \ref{StickyParticlesExist} parts $(iii)$ and $(iv)$  and is stated as follows.
\begin{cor}\label{AveProp}
Assume $g:\R\rightarrow \R$ and $0\le s<t$. Then 
\begin{align}\label{AveragingProp}
\sum^N_{i=1}m_ig(\gamma_i(t))\dot\gamma_i(t+)=  \sum^N_{i=1}m_ig(\gamma_i(t))\left[\dot\gamma_i(s+)-\int^t_s \sum^N_{j=1}m_jW'(\gamma_i(\tau)-\gamma_j(\tau))d\tau \right].
\end{align}
\end{cor}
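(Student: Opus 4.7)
The strategy is to fix $t > 0$ and partition the particle indices by position at that time. Let $C_1, \dots, C_K$ be the equivalence classes of $\{1, \dots, N\}$ under $i \sim j$ iff $\gamma_i(t) = \gamma_j(t)$, and write $x_\ell$ for the common value $\gamma_i(t)$, $i \in C_\ell$. Because the factor $g(\gamma_i(t)) = g(x_\ell)$ appearing in \eqref{AveragingProp} is constant on each cluster, the identity reduces to the cluster-momentum claim
\[
P_\ell(t+) = P_\ell(s+) - \int_s^t \sum_{i \in C_\ell} m_i \sum_{j=1}^N m_j W'(\gamma_i(\tau) - \gamma_j(\tau))\,d\tau
\]
for every $\ell = 1, \dots, K$, where $P_\ell(\tau) := \sum_{i \in C_\ell} m_i \dot\gamma_i(\tau+)$. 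Multiplying each such identity by $g(x_\ell)$, summing over $\ell$, and reindexing via $\sum_\ell g(x_\ell) \sum_{i \in C_\ell} (\cdots) = \sum_{i=1}^N g(\gamma_i(t))(\cdots)$ then recovers \eqref{AveragingProp}.

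To establish this cluster identity, the first step will be a separation fact from property $(iii)$ of Proposition \ref{StickyParticlesExist}: distinct clusters never share a position in $[0, t]$, since if $i \in C_\ell$ and $j \in C_m$ with $\ell \ne m$ satisfied $\gamma_i(\tau) = \gamma_j(\tau)$ for some $\tau \le t$, then $(iii)$ would force $\gamma_i(t) = \gamma_j(t)$ and collapse the two clusters. Consequently, at every $\tau \in (s, t]$ the sub-partition of $C_\ell$ according to position occupies positions distinct from all other particles, and property $(iv)$ applies to each sub-cluster, giving each post-velocity as the mass-weighted average of the corresponding pre-velocities. A direct telescoping then yields the conservation identity $P_\ell(\tau+) = P_\ell(\tau-)$ across every $\tau \in (s, t]$, so $P_\ell$ is in fact continuous on $[s, t]$.

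Between consecutive collision times in $[s,t]$, property $(i)$ provides Newton's equation $\ddot\gamma_i(\tau) = -\sum_{j=1}^N m_j W'(\gamma_i(\tau) - \gamma_j(\tau))$ for each $i$, and summing over $i \in C_\ell$ gives
\[
\frac{d}{d\tau} P_\ell(\tau) = -\sum_{i \in C_\ell} m_i \sum_{j=1}^N m_j W'(\gamma_i(\tau) - \gamma_j(\tau)).
\]
Since there are only finitely many collision times and $P_\ell$ is continuous across each of them, integrating over $[s, t]$ produces the displayed expression for $P_\ell(t+)$. The substantive content of the proof is the momentum-preservation statement $P_\ell(\tau+) = P_\ell(\tau-)$, which rests entirely on the cluster-separation consequence of $(iii)$ together with $(iv)$; everything else is careful bookkeeping with one-sided limits (handled uniformly by always working with $\dot\gamma_i(\tau+)$) and rearrangement of double sums. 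I do not foresee any serious obstacle beyond setting up this reduction cleanly.
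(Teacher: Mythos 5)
The paper leaves this corollary unproved, noting only that it ``follows from Proposition \ref{StickyParticlesExist} parts $(iii)$ and $(iv)$''; your argument correctly supplies the intended proof. Reducing to a cluster-momentum identity by grouping indices with a common value of $\gamma_i(t)$, using $(iii)$ to show distinct clusters never share a position on $[0,t]$ (so that $(iv)$ applies sub-cluster by sub-cluster and gives $P_\ell(\tau+)=P_\ell(\tau-)$ at each of the finitely many collision times), and invoking $(i)$ together with the fundamental theorem of calculus on the collision-free subintervals is exactly the reasoning required, and your handling of one-sided limits is careful and correct.
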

\noindent This identity may seem curious at first sight. However, it turns out to be quite natural.  Indeed, it asserts that the ODE system \eqref{NewtonSystem} holds in a conditional sense. In particular, we will see that it encodes the conservation of momentum that occurs in between and during collisions.

\par There are also a few more bounds that will be useful in our compactness argument. The first is stated in terms of $\chi$ defined in \eqref{ChiFun}.
\begin{cor}\label{StickPartTrajEnergy}
For almost every $t\ge 0$,
\be\label{DiscreteEnergyEst2}
\sum^N_{i=1}m_i\dot\gamma_i(t)^2\le  
\left[\sum^N_{i=1}m_iv_i^2+\sum^N_{i,j=1}m_im_jW'(x_i-x_j)^2 \right]\chi'(t).
\ee
\end{cor}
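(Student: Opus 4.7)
The plan is to mimic the proof of Proposition \ref{ODENewtonProp}, with the key additional observation that although the sticky particle trajectories are only piecewise $C^2$, the mechanical energy
$$E(t):=\sum_{i=1}^N \frac{m_i}{2}\dot\gamma_i(t)^2 + V(\gamma(t)), \qquad V(x):=\frac{1}{2}\sum_{i,j=1}^N m_im_j W(x_i-x_j),$$
is nevertheless non-increasing on $[0,\infty)$. Between collisions, Newton's system \eqref{NewtonSystem} holds and $E$ is conserved by the standard computation. At a collision at some time $s$, positions are continuous (so $V(\gamma(\cdot))$ is continuous) while the post-collision velocities come from part $(iv)$ of Proposition \ref{StickyParticlesExist}. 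For any subcollection $\{i_1,\dots,i_k\}$ that merges at time $s$, conservation of momentum gives a common post-collision velocity $v=(\sum_j m_{i_j}\dot\gamma_{i_j}(s-))/(\sum_j m_{i_j})$, and by Cauchy--Schwarz
$$\Bigl(\sum_{j=1}^k m_{i_j}\Bigr)v^{2}=\frac{\bigl(\sum_j m_{i_j}\dot\gamma_{i_j}(s-)\bigr)^{2}}{\sum_j m_{i_j}}\le \sum_{j=1}^k m_{i_j}\dot\gamma_{i_j}(s-)^2,$$
so kinetic energy can only drop across the collision. Hence $E(t)\le E(0)$ for all $t\ge 0$.

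Once that monotonicity is in hand, I would simply re-run the argument from Proposition \ref{ODENewtonProp}. Using the semiconvexity of $V$ exactly as in that proof (writing $|\gamma_i(t)-x_i|^2=|\int_0^t\dot\gamma_i(s)\,ds|^2\le t\int_0^t|\dot\gamma_i(s)|^2\,ds$) gives the lower bound
$$V(\gamma(t))\ge V(x)-\tfrac12\sum_{i=1}^N\frac{|D_{x_i}V(x)|^2}{m_i}-\tfrac12(L+1)t\int_0^t\sum_{i=1}^N m_i|\dot\gamma_i(s)|^2\,ds.$$
Substituting this into $E(t)\le E(0)$ yields the same integral inequality \eqref{ConsIneq} as before, now valid for (almost) all $t$ for the sticky trajectories. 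The Grönwall-style trick with the integrating factor $e^{-\frac12(L+1)t^2}$ then produces exactly the bound \eqref{PrimeEnergyEst}, namely
$$\sum_{i=1}^N m_i\dot\gamma_i(t)^2\le \dot\chi(t)\Bigl(\sum_{i=1}^N m_i v_i^2+\sum_{i=1}^N\frac{|D_{x_i}V(x)|^2}{m_i}\Bigr).$$

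Finally, I would invoke the algebraic inequality $\sum_i |D_{x_i}V(x)|^2/m_i\le \sum_{i,j}m_im_jW'(x_i-x_j)^2$ that was already derived inside the proof of Proposition \ref{KineticEnergyEstJV} (a direct application of Cauchy--Schwarz using $\sum_j m_j=1$) to replace the forcing term by the desired $\sum_{i,j}m_im_jW'(x_i-x_j)^2$. Noting $\dot\chi=\chi'$ gives \eqref{DiscreteEnergyEst2} for a.e.\ $t\ge 0$ — the ``a.e.'' accommodating the finitely many collision times at which $\dot\gamma_i$ is only one-sidedly defined.

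The main obstacle is really just the first step: verifying that the sticky mechanism is dissipative for $E$. Everything after that is the same chain of semiconvexity plus Grönwall as in Proposition \ref{ODENewtonProp}, so once collisions are controlled, the estimate falls out mechanically.
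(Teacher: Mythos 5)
Your proof is correct and follows essentially the same route as the paper. You establish that the sticky mechanism dissipates the mechanical energy (the paper invokes Jensen's inequality together with Proposition~\ref{StickyParticlesExist}$(iv)$; your Cauchy--Schwarz computation across a merge is the same step written out), and then you re-run the semiconvexity-plus-Gr\"onwall argument from Proposition~\ref{ODENewtonProp} together with the algebraic bound $\sum_i|D_{x_i}V(x)|^2/m_i\le\sum_{i,j}m_im_jW'(x_i-x_j)^2$ from Proposition~\ref{KineticEnergyEstJV}, exactly as the paper indicates.
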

\begin{proof}
It can be shown using Jensen's inequality and property $(iv)$, that sticky particle trajectories have nonincreasing energy: for each $0\le s< t$
\begin{align}\label{ContNonincreaseEnergy}
&\frac{1}{2}\sum^N_{i=1}m_i\dot\gamma_i(t+)^2+\frac{1}{2}\sum^N_{i,j=1}m_im_jW(\gamma_i(t)-\gamma_j(t))\\
&\quad \le \frac{1}{2}\sum^N_{i=1}m_i\dot\gamma_i(s+)^2+
\frac{1}{2}\sum^N_{i,j=1}m_im_jW(\gamma_i(s)-\gamma_j(s)).
\end{align}
Using the convexity of $x\mapsto W(x)+(L/2)x^2$, we can argue very similarly to how we did in deriving \eqref{PrimeEnergyEst} to obtain \eqref{DiscreteEnergyEst2}.  
\end{proof}

\par The final set of estimates quantify how particles stick together and are stated as follows.  These bounds are proved Proposition 3.4 and 3.8 in \cite{Hynd}, so we will omit the required argument here. 
\begin{prop}\label{PropQSPP}
Fix $i,j\in \{1,\dots,N\}$. \\ (a) For $0<s\le t$,
\be\label{QSPP}
\frac{|\gamma_i(t)-\gamma_j(t)|}{\sinh(\sqrt{L}t)}
\le \frac{|\gamma_i(s)-\gamma_j(s)|}{\sinh(\sqrt{L}s)}.
\ee 
(b) For $x_i\ge x_j$ and $t\ge 0$,  
\be\label{timeZeroEst}
\gamma_i(t)-\gamma_j(t)\le \cosh(\sqrt{L}t)(x_i-x_j)+\frac{1}{\sqrt{L}}\sinh(\sqrt{L}t)\int^{x_i}_{x_j}|v_0'(x)|dx.
\ee
\end{prop}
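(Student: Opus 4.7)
The plan is to combine a second-order one-sided ODE inequality for $u(t) := \gamma_i(t) - \gamma_j(t)$ between collisions with convex-combination arguments at the finitely many collision times. Without loss of generality I assume $x_i \ge x_j$: by the order-preservation property (iii) of Proposition \ref{StickyParticlesExist}, $\gamma_i(t) \ge \gamma_j(t)$ for all $t$, so $u = |\gamma_i - \gamma_j| \ge 0$. On any interval free of collisions, \eqref{NewtonSystem} gives
\[
\ddot u(t) = -\sum_{k=1}^N m_k\bigl[W'(\gamma_i(t) - \gamma_k(t)) - W'(\gamma_j(t) - \gamma_k(t))\bigr],
\]
and the semiconvexity hypothesis $(W'(a) - W'(b))(a-b) \ge -L(a-b)^2$, applied with $a - b = u \ge 0$ and $\sum_k m_k = 1$, yields the fundamental inequality
\[
\ddot u(t) \le L u(t) \quad \text{between collisions.}
\]

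For part (a), I compare $u$ with $y(t) := \sinh(\sqrt L\, t)$, which satisfies $\ddot y = Ly$ with $y(0) = 0$. The Wronskian $\Omega(t) := \dot u(t)\, y(t) - u(t)\, \dot y(t)$ obeys $\dot\Omega = (\ddot u - Lu)\,y \le 0$ between collisions and $\Omega(0^+) = -\sqrt L\,u(0) \le 0$. Since $(u/y)' = \Omega/y^2$, showing $\Omega \le 0$ on $(0,\infty)$ is equivalent to the monotonicity in (a). For part (b), I compare $u$ with
\[
Z(t) := (x_i - x_j)\cosh(\sqrt L\, t) + \frac{1}{\sqrt L}\sinh(\sqrt L\, t)\int_{x_j}^{x_i}|v_0'(x)|\,dx,
\]
which solves $\ddot Z = LZ$ with $Z(0) = u(0)$ and $\dot Z(0) = \int_{x_j}^{x_i}|v_0'|\,dx \ge v_0(x_i) - v_0(x_j) = \dot u(0)$, using the absolute continuity of $v_0$. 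Setting $\phi := Z - u$, I obtain $\phi(0) = 0$, $\dot\phi(0) \ge 0$, and $\ddot\phi \ge L\phi$ between collisions. The analogous Wronskian $\dot\phi\cosh(\sqrt L\, t) - \sqrt L\,\phi\sinh(\sqrt L\, t)$ is non-decreasing with non-negative initial value, so $\phi/\cosh(\sqrt L\, t)$ is non-decreasing on each inter-collision interval; starting from $0$ this forces $\phi \ge 0$, which is the bound in (b).

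The main obstacle is propagating these inequalities across collision times, where $\dot u$ may jump in either direction. Fix a collision time $\tau$. If $i$ and $j$ end up in the same cluster, then $u \equiv 0$ on $[\tau,\infty)$ and there is nothing to prove; otherwise $i$ lies in a cluster $I$ and $j$ in a disjoint cluster $J$ at $\tau^+$. Because every $\gamma_l$ for $l \in I$ coincides with $\gamma_I$ at $\tau$, property (iv) yields the key convex-combination identity
\[
\dot u_{ij}(\tau^+) = \bar v_I - \bar v_J = \sum_{l \in I,\, k \in J}\frac{m_l\, m_k}{m_I\, m_J}\,\dot u_{lk}(\tau^-),
\]
with $u_{lk}(\tau^-) = u_{ij}(\tau^+)$ for every such pair $(l,k)$. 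Hence the post-collision Wronskian $\Omega_{ij}(\tau^+)$ is a convex combination of the pre-collision $\Omega_{lk}(\tau^-)$, and a finite induction on the collision times propagates $\Omega \le 0$ across every collision, completing (a). For part (b) the same convex-combination structure applies to the quantity $\dot u + \sqrt L\, u$ arising from the factorization $\ddot u - Lu = (D - \sqrt L)(D + \sqrt L)u$, but because $Z$ depends on the specific initial data $(x_i, x_j, v_0)$, a naive induction does not close: one must strengthen the hypothesis to remain stable under averaging over cluster representatives, exploiting the contiguous structure of clusters and the extremal representatives of $I$ and $J$ to bound the weighted averages in $\tau^+$-data by the bounds associated to the pair $(i, j)$. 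This additional bookkeeping is carried out in Propositions 3.4 and 3.8 of \cite{Hynd}.
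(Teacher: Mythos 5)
The paper does not actually supply a proof of this proposition: it cites Propositions 3.4 and 3.8 of \cite{Hynd} and explicitly omits the argument, so there is no in-paper proof to compare against and your sketch must be judged on its own.

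Your argument for part (a) is sound. Assuming $x_i \ge x_j$ and using order preservation from part $(iii)$ of Proposition \ref{StickyParticlesExist}, $u := \gamma_i - \gamma_j \ge 0$. Between collisions the one-sided Lipschitz bound $(W'(a)-W'(b))(a-b) \ge -L(a-b)^2$ applied with $a-b=u\ge0$, together with $\sum_k m_k=1$, yields $\ddot u \le Lu$. The Wronskian $\Omega = \dot u \sinh(\sqrt L\,\cdot) - \sqrt L\, u\cosh(\sqrt L\,\cdot)$ then satisfies $\dot\Omega = (\ddot u - Lu)\sinh(\sqrt L\,\cdot) \le 0$ off the finitely many collisions and $\Omega(0^+) = -\sqrt L\,u(0) \le 0$; since $(u/\sinh)' = \Omega/\sinh^2$, this gives the monotonicity on each inter-collision interval. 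At a collision time $\tau$, if $i$ and $j$ merge then $u\equiv 0$ afterward, and otherwise property $(iv)$ gives $\dot u_{ij}(\tau^+) = \sum_{l\in I,k\in J}\tfrac{m_l m_k}{m_I m_J}\dot u_{lk}(\tau^-)$ with $u_{lk}(\tau)=u_{ij}(\tau)$ for all $l\in I$, $k\in J$, so $\Omega_{ij}(\tau^+)$ is a convex combination of the $\Omega_{lk}(\tau^-)$. A simultaneous induction over all ordered pairs and the finitely many collision times then propagates $\Omega\le0$, and continuity of $u/\sinh$ at collisions makes it nonincreasing on all of $(0,\infty)$. This is a complete proof of \eqref{QSPP}.

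For part (b) you set up the right comparison ($\ddot\phi \ge L\phi$ for $\phi := Z-u$, with $\phi(0)=0$ and $\dot\phi(0)\ge0$ using absolute continuity of $v_0$), and you correctly diagnose that the induction does not close naively: $Z$ depends on the specific data $(x_i,x_j,v_0|_{[x_j,x_i]})$, so $\dot\phi_{ij}(\tau^+)$ is not simply a convex combination of $\dot\phi_{lk}(\tau^-)$ — one picks up error terms $\dot Z_{ij}(\tau) - \dot Z_{lk}(\tau)$ that must be controlled using the contiguity of clusters. You flag this and defer it to \cite{Hynd}. That is an honest assessment, but it means \eqref{timeZeroEst} is not actually established by your argument; it remains dependent on the cited reference, which is the same position the paper itself takes.
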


\par We can use the trajectories to build a solution to  \eqref{FlowMapEqn} as follows. Define 
\be\label{DiscreteXX}
X(t): \{x_1,\dots, x_N\}\rightarrow \R; x_i\mapsto \gamma_i(t)
\ee
for $t\ge 0$. It is clear that $X(t)\in L^2(\rho_0)$ for each $t\ge 0$. 

\begin{prop}\label{DiscreteSolnProp}
The mapping $X$ defined in \eqref{DiscreteXX} has the following properties. 

\begin{enumerate}[(i)]

\item $X(0)=\textup{id}_\R$ and 
\be
\dot X(t)=\E_{\rho_0}\left[v_0-\displaystyle\int^t_0(W'*\rho_s)(X(s))ds\bigg| X(t)\right]
\ee
for all but finitely many $t\ge 0$. Both equalities hold on the support of $\rho_0$. 

\item $E(t)\le E(s)$, for $s\le t$. Here 
\begin{align*}
E(\tau):=\int_{\R}\frac{1}{2}\dot X(\tau+)^2d\rho_0+\int_{\R}\int_{\R}\frac{1}{2}W(X(y,\tau)-X(z,\tau))d\rho_0(y)d\rho_0(z).
\end{align*}

\item $X: [0,\infty)\rightarrow L^2(\rho_0); t\mapsto X(t)$ is locally Lipschitz continuous.

\item For $t\ge 0$ and $y,z\in\textup{supp}(\rho_0)$ with $y\le z$, 
$$
0\le X(z,t)-X(y,t)\le \cosh(\sqrt{L}t)(z-y)+\frac{1}{\sqrt{L}}\sinh(\sqrt{L}t)\int^z_y|v_0'(x)|dx.
$$

\item For each $0<s\le t$ and $y,z\in\textup{supp}(\rho_0)$
$$
\frac{|X(y,t)-X(z,t)|}{\sinh(\sqrt{L}t)}\le \frac{|X(y,s)-X(z,s)|}{\sinh(\sqrt{L}s)}.
$$

\item There is a Borel $v: \R\times[0,\infty)\rightarrow \R$ such that 
$$
\dot X(t)=v(X(t),t)
$$
for almost every $t\ge 0$. 

\end{enumerate}

\end{prop}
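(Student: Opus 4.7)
The plan is to translate each of (i)--(vi) directly from properties of the finite sticky particle system, using the elementary dictionary $\int_\R \phi(X(t))\, d\rho_0 = \sum_{i=1}^N m_i \phi(\gamma_i(t))$ for Borel $\phi$ and $\|\dot X(\tau+)\|_{L^2(\rho_0)}^2 = \sum_i m_i \dot\gamma_i(\tau+)^2$ for all but finitely many $\tau$.

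Properties (iv) and (v) are immediate restatements of Proposition \ref{PropQSPP}(b) and (a), provided $X(t)$ is non-decreasing on $\textup{supp}(\rho_0)$; this latter monotonicity follows because Proposition \ref{StickyParticlesExist}(iii) prevents any two trajectories from crossing without sticking. For (ii), the quantity $E(\tau)$ equals the kinetic-plus-potential energy of the $N$-particle system, so its monotonicity is inequality \eqref{ContNonincreaseEnergy} established in the proof of Corollary \ref{StickPartTrajEnergy}. For (iii), Corollary \ref{StickPartTrajEnergy} gives a locally uniform bound $\sum_i m_i \dot\gamma_i(\tau)^2 \le C(T)$ on $[0,T]$, and Cauchy--Schwarz then yields
$$\|X(t)-X(s)\|_{L^2(\rho_0)}^2 \;=\; \sum_i m_i\left(\int_s^t \dot\gamma_i(\tau)\, d\tau\right)^2 \;\le\; C(T)(t-s)^2.$$

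For (i), the initial value $X(0) = \textup{id}_\R$ on $\textup{supp}(\rho_0)$ is immediate from $\gamma_i(0) = x_i$. The conditional-expectation identity follows by applying the averaging property (Corollary \ref{AveProp}) with $s = 0$ and $g$ ranging over bounded Borel functions: since $\sigma(X(t))$ is generated by $\{g(X(t)) : g \textup{ bounded Borel}\}$, the resulting identity
$$\int_\R g(X(t))\, \dot X(t)\, d\rho_0 \;=\; \int_\R g(X(t))\left[v_0 - \int_0^t (W'*\rho_s)(X(s))\, ds\right] d\rho_0$$
is exactly the defining property of $\E_{\rho_0}[\,\cdot \mid X(t)]$. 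For (vi), once two indices $i, j$ satisfy $\gamma_i(t) = \gamma_j(t)$, properties (iii) and (iv) of Proposition \ref{StickyParticlesExist} force $\dot\gamma_i(t+) = \dot\gamma_j(t+)$, so $v(X(y,t), t) := \dot X(y, t+)$ is unambiguous on the (finite) image of $X(t)$ for each $t$; extending by zero off this image yields a piecewise smooth, hence Borel, $v$ on $\R \times [0,\infty)$.

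The main obstacle I anticipate is part (i): the averaging identity is only known away from collision times, so one must explicitly verify that the exceptional set has Lebesgue measure zero (the finite-collision structure from Proposition \ref{StickyParticlesExist}(i) takes care of this) and that $\dot X(t+)$ furnishes a genuine representative of the derivative $\dot X(t)$ almost everywhere, so the conditional-expectation interpretation of \eqref{FlowMapEqn} is consistent with what the averaging property actually provides.
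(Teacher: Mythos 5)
Your proof is correct and follows essentially the same route as the paper: each item is reduced to the discrete sticky-particle facts — Corollary \ref{AveProp} for (i), inequality \eqref{ContNonincreaseEnergy} for (ii), Corollary \ref{StickPartTrajEnergy} plus Cauchy--Schwarz for (iii), Proposition \ref{PropQSPP} for (iv)--(v), and Proposition \ref{StickyParticlesExist}(iii) for the well-definedness of $v$ in (vi) — via the dictionary $\int_\R \phi(X(t))\,d\rho_0=\sum_i m_i\phi(\gamma_i(t))$. You supply a few details the paper leaves implicit (notably that the lower bound in (iv) rests on the non-crossing of sticky trajectories, and how the averaging identity becomes the conditional-expectation statement in (i)), and your Cauchy--Schwarz computation for (iii) is arguably cleaner than the paper's displayed chain, which appears to carry a typographical slip in the exponent on $(t-s)$.
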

\begin{proof}
 $(i)$ follows from Corollary \ref{AveProp}.  $(ii)$ is due to inequality \eqref{ContNonincreaseEnergy}. In view of \eqref{DiscreteEnergyEst2}, 
 \begin{align}
 \int_{\R}(X(t)-X(s))^2\rho_0&\le (t-s)^{1/2}\int^t_s\left(\int_{\R}\dot X(\tau)^2\rho_0\right)d\tau\\
 &\le\left[\sum^N_{i=1}m_iv_i^2+\sum^N_{i,j=1}m_im_jW'(x_i-x_j)^2 \right](t-s)^{1/2}  \int^t_s\chi(\tau)d\tau 
 \end{align}
 for $0\le s\le t$. We conclude $(iii)$. $(iv)$ and $(v)$ follow from Proposition \ref{PropQSPP}. As for $(vi)$, set
 $$
v(x,t):=
\begin{cases}
\dot\gamma_i(t+), \quad &x=\gamma_i(t)\\
0, \quad &\text{otherwise}
\end{cases}
 $$
for $x\in \R$ and $t\ge 0$. This function is well defined by part $(iii)$ of Proposition \ref{StickyParticlesExist}, and it is routine to check that $v$ is Borel. By the definition of $X$, 
we see $\dot X(t)=v(X(t),t)$ for almost every $t\ge 0$. 
\end{proof}

\subsection{Convergence}
Now we will suppose $\rho_0\in {\cal P}(\R)$ is a general probability measure satisfying \eqref{InitRhoZeroVeeZero}.  We can select a sequence 
$(\rho^k_0)_{k\in \N}\subset {\cal P}(\R)$ for which each $\rho^k_0$ is of the form \eqref{ConvCombDirac} and
$$
\lim_{k\rightarrow\infty}\int_{\R}gd\rho^k_0=\int_{\R}gd\rho_0
$$
for continuous $g: \R\rightarrow \R$ which grows at most quadratically.  By Proposition \ref{DiscreteSolnProp}, there is an absolutely continuous 
$X^k:  [0,\infty)\rightarrow L^2(\rho^k_0)$ for which satisfies $(i)-(vi)$ of that assertion with $\rho^k_0$ replacing $\rho_0$ for each $k\in \N$. 

\par It turns out that $(X^k)_{k\in \N}$ is compact in a certain sense. In particular, we established the following claim in section in section 4 of  \cite{Hynd}.

\begin{prop}\label{StrongCompactXkayjay}
There is a subsequence $(X^{k_j})_{j\in \N}$ and a locally Lipschitz $X: [0,\infty)\rightarrow L^2(\rho_0)$ such that 
\be\label{StrongConvLimXkayJay}
\lim_{j\rightarrow\infty}\int_{\R}h(\textup{id}_\R,X^{k_j}(t))d\rho^{k_j}_0=\int_{\R}h(\textup{id}_\R,X(t))d\rho_0
\ee
for each $t\ge 0$ and continuous $h:\R^2\rightarrow \R$ with
\be
\sup_{(x,y)\in \R^2}\frac{|h(x,y)|}{1+x^2+y^2}<\infty.
\ee
Furthermore, $X$ has the following properties.  

\begin{enumerate}[(i)]

\item $X$ is a solution of \eqref{FlowMapEqn} which satisfies \eqref{xInit}.

\item 
$E(t)\le E(s)$, for Lebesgue almost every $s\le t$, where 
\begin{align*}
E(\tau):=\int_{\R}\frac{1}{2}\dot X(\tau+)^2d\rho_0+\int_{\R}\int_{\R}\frac{1}{2}W(X(y,\tau)-X(z,\tau))d\rho_0(y)d\rho_0(z).
\end{align*}

\item For $t\ge 0$ and $y,z\in\textup{supp}(\rho_0)$ with $y\le z$, 
$$
0\le X(z,t)-X(y,t)\le \cosh(\sqrt{L}t)(z-y)+\frac{1}{\sqrt{L}}\sinh(\sqrt{L}t)\int^z_y|v_0'(x)|dx.
$$

\item For each $0<s\le t$ and $y,z\in\textup{supp}(\rho_0)$
$$
\frac{|X(y,t)-X(z,t)|}{\sinh(\sqrt{L}t)}\le \frac{|X(y,s)-X(z,s)|}{\sinh(\sqrt{L}s)}.
$$

\item There is a Borel $v: \R\times[0,\infty)\rightarrow \R$ such that 
$$
\dot X(t)=v(X(t),t)
$$
for almost every $t\ge 0$. 
\end{enumerate}
\end{prop}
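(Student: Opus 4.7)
The plan is to produce $X$ as a limit of $X^k$ by combining Helly-type compactness in Lagrangian coordinates with the uniform energy bounds from Corollary \ref{StickPartTrajEnergy}. First I would parametrize everything on a common probability space: let $F_k(x)=\rho_0^k((-\infty,x])$ and $F(x)=\rho_0((-\infty,x])$, and introduce the quantile functions $F_k^{-1},F^{-1}:[0,1]\to\R$, so that each $\rho_0^k$ (resp.\ $\rho_0$) is the pushforward of Lebesgue measure on $[0,1]$ under $F_k^{-1}$ (resp.\ $F^{-1}$). Set $Y^k(t,\omega):=X^k(t,F_k^{-1}(\omega))$; by Proposition \ref{DiscreteSolnProp}(iv) each $Y^k(t,\cdot)$ is non-decreasing in $\omega$, and by Corollary \ref{StickPartTrajEnergy} together with the uniform control on $\int v_0^2\,d\rho_0^k$ and $\iint W'(x-y)^2\,d\rho_0^k(x)\,d\rho_0^k(y)$ afforded by the growth assumption on $W'$ and the strong convergence of the initial data, one has $\|\dot Y^k(t)\|_{L^2(0,1)}\le C(t)$ locally uniformly in $t$.

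Second, I would combine Helly's selection theorem (applied at a countable dense set of times to the non-decreasing maps $\omega\mapsto Y^k(t,\omega)$) with the $L^2$-equicontinuity in time, extracting by a diagonal argument a subsequence $Y^{k_j}$ converging to some $Y$ pointwise a.e.\ in $\omega$ for each $t$, and uniformly on compact time intervals in $L^2(0,1)$. Defining $X(t,x):=Y(t,F(x))$ on $\operatorname{supp}(\rho_0)$ gives a locally Lipschitz $X:[0,\infty)\to L^2(\rho_0)$. To obtain \eqref{StrongConvLimXkayJay}, I would write
\[
\int_\R h(\id_\R,X^{k_j}(t))\,d\rho_0^{k_j}=\int_0^1 h(F_{k_j}^{-1}(\omega),Y^{k_j}(t,\omega))\,d\omega,
\]
and pass to the limit using the a.e.\ convergence $F_{k_j}^{-1}\to F^{-1}$, the a.e.\ convergence of $Y^{k_j}$, and the uniform quadratic integrability of the integrand (an $L^2$-tail argument using the energy bound and the quadratic growth condition on $h$).

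Third, properties (ii)--(v) transfer directly. Inequality (iii) and the ratio estimate (iv) are stable under pointwise a.e.\ limits of the discrete versions in Proposition \ref{DiscreteSolnProp}; the energy inequality (ii) follows from \eqref{ContNonincreaseEnergy} plus lower semicontinuity of the kinetic-energy term under weak $L^2$ convergence of $\dot Y^{k_j}(\tau)$ and continuity of the $W$ double integral under \eqref{StrongConvLimXkayJay}; the velocity representation (v) follows by the same measurable-selection argument used in part (vi) of Proposition \ref{DiscreteSolnProp} once (iii) is known, since $X(\cdot,t)$ is monotone and hence defines $v(x,t)$ unambiguously on its range.

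The main obstacle is (i), verifying that the limit $X$ satisfies the flow equation \eqref{FlowMapEqn}. The discrete averaging property (Corollary \ref{AveProp}) can be integrated in time and tested against $h\in C_b(\R)$ to give, for each $k$,
\[
\int_\R h(X^k(t))\dot X^k(t)\,d\rho_0^k=\int_\R h(X^k(t))\left[v_0-\int_0^t (W'*\rho_s^k)(X^k(s))\,ds\right]d\rho_0^k.
\]
Passing to the limit requires narrow convergence $\rho_s^{k_j}=X^{k_j}(s)_\#\rho_0^{k_j}\to\rho_s$ (a consequence of \eqref{StrongConvLimXkayJay} applied to $h$ depending only on the second variable), together with uniform control of $W'*\rho_s^k$ via the linear growth of $W'$ and the uniform second-moment bound on $\rho_s^k$, so that dominated convergence applies to the $s$-integral. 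The $\dot X^k(t)$ factor on the left-hand side converges only weakly in $L^2(\rho_0^k)$, so one must use the characterization of conditional expectation to conclude that the weak limit equals $\mathbb{E}_{\rho_0}[\,v_0-\int_0^t (W'*\rho_s)(X(s))\,ds\mid X(t)\,]$; this is the delicate step, because sticky collisions cause $X(t)$ to have level sets of positive $\rho_0$-mass and the conditioning sigma-algebra genuinely depends on $t$.
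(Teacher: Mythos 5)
The paper itself does not prove Proposition \ref{StrongCompactXkayjay} here: it states the result and cites \cite{Hynd} (section 4) for the argument, so there is no in-text proof to compare your proposal against. Your sketch, however, is a reasonable reconstruction of the kind of argument one expects: quantile reparametrization to put all the $X^k$ on a common probability space, Helly's selection theorem at a countable dense set of times to exploit the monotonicity from Proposition \ref{DiscreteSolnProp}(iv), the uniform $L^2$ estimate on $\dot X^k$ from Corollary \ref{StickPartTrajEnergy}, and then passing to the limit in the averaging identity from Corollary \ref{AveProp}. Parts (ii)--(v) transfer essentially as you describe.

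The difficulty, as you recognize, is step (i), and you flag it as ``the delicate step'' without actually closing it. Two things are left open. First, to pass to the limit on the left side of the averaging identity you need $\dot Y^{k_j}(t)\rightharpoonup \dot Y(t)$ weakly in $L^2(0,1)$ \emph{for (a.e.) fixed $t$}; this does not follow directly from the uniform energy bounds plus $Y^{k_j}\to Y$ in $C([0,T];L^2)$. The standard resolution is to work in $L^2([0,T]\times(0,1))$: there $\dot Y^{k_j}\rightharpoonup \dot Y$ because the weak limit must coincide with the distributional time-derivative of the strong limit, while $h(Y^{k_j})\to h(Y)$ strongly, so the product integral converges after an extra time integration against an arbitrary test function in $t$; this yields the identity only for Lebesgue-a.e.\ $t$, which is all that is claimed. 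Second, you gesture at ``the characterization of conditional expectation'' but do not say why the limiting velocity is $\sigma(X(t))$-measurable, which is exactly what \eqref{FlowMapEqn} requires beyond the integral identity. This is where (iv) and (v) are not independent afterthoughts but inputs: the sticking inequality (iv) survives the a.e.\ limit and shows that the level sets of $X(t)$ can only merge in time, which is what makes $\dot X(t+)$ a function of $X(t)$ alone and thus furnishes the Borel $v$ of part (v). Once $\dot X(t)=v(X(t),t)$ is established, the integral identity obtained in the limit is precisely the defining property of the conditional expectation, and (i) follows. Your proposal names the obstruction but leaves both of these pieces unargued, which is where the real content of the result lies.
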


\par We can now sketch a proof of Theorem  \ref{EPthm}. 

\begin{proof}[Proof of Theorem \ref{EPthm}].
Suppose $X$ is the map obtained in Proposition \ref{StrongCompactXkayjay} above and set
$$
\rho: [0,\infty)\rightarrow {\cal P}(\R); t\mapsto X(t){_\#}\rho.
$$
By Lemma \ref{solnPElemma}, $\rho$ and $v$ from part $(v)$ is a weak solution pair of \eqref{PressLessEuler} which satisfies the initial conditions \eqref{Init}. As
$$
E(\tau)=\int_{\R}\frac{1}{2}v(x,\tau)^2d\rho_\tau(x)+\iint_{\R^2}\frac{1}{2}W(x-y)d\rho_\tau(x)d\rho_\tau(y)
$$
for almost every $\tau\ge 0$, $E$ is an essentially nonincreasing function by part $(ii)$. 

\par We are only left to verify the entropy inequality \eqref{entropy}.  By part $(vi)$, 
\begin{align*}
0&\ge\frac{d}{dt} \frac{(X(y,t)-X(z,t))^2}{\sinh(\sqrt{L}t)^2}\\
&=\frac{2}{\sinh(\sqrt{L}t)^2}\left( (v(X(y,t),t)-v(X(z,t),t))(X(y,t)-X(z,t))  -\frac{(X(y,t)-X(z,t))^2}{\tanh(\sqrt{L}t)}\right)
\end{align*} 
for Lebesgue almost every $t>0$ and $y,z$ belonging to some Borel $Q\subset \R$  with $\rho_0(Q)=1$. 
That is, \eqref{entropy} holds for $x,y$ belonging to 
$$
X(t)(Q):=\{X(y,t)\in  \R: y\in Q\}
$$
and Lebesgue almost every $t>0$. By part $(iii)$ of the above proposition, $y\mapsto X(y,t)$ is continuous and monotone on $\R$.  As a result, 
 $X(t)(Q)$ is Borel measurable which enables us to check 
 $$
 \rho_t(X(t)(Q))=\rho_0(X(t)^{-1}[X(t)(Q)])\ge \rho_0(Q)=1.
 $$
As a result, we conclude that  \eqref{entropy} holds for for $\rho_t$ almost every $x,y\in \R$ and Lebesgue almost every $t>0$. 
 \end{proof}

\section{Elastodynamics}\label{ElastoDyn}
We will now discuss the the following initial value problem which arises in the dynamics of elastic bodies.
Let us suppose $U\subset\R^d$ is a bounded domain with smooth boundary and $F\in C^1(\Md)$, where $\Md$ is the space of real $d\times d$ matrices. We consider the following initial value problem
\be\label{IVP}
\begin{cases}
\bu_{tt}=\text{div}DF(D\bu)\; &\text{in}\; U\times(0,T)\\
\hspace{.1in}\bu=\bzero&\text{on}\; \partial U\times[0,T)\\
\hspace{.1in}\bu=\bg&\text{on}\; U\times\{0\}\\
\hspace{.05in}\bu_t=\bh&\text{on}\; U\times\{0\}.
\end{cases}
\ee
The unknown is a mapping $\bu:U\times[0,T)\rightarrow\R^d$ and $\bg,\bh:U\rightarrow\R^d$ are given.   Also $\bzero:U\rightarrow \R^d$ is the 
mapping that is identically equal to $0\in \R^d$. 

\par We will use the matrix norm $|A|:=\text{trace}(A^tA)^{1/2}$ and suppose
\be
\Md\ni A\mapsto F(A)+\frac{L}{2}|A|^2
\ee
is convex for some $L> 0$; this semiconvexity assumption is known as the Andrews-Ball condition \cite{MR657784}. Moreover, we will assume that $F$ is coercive. That is,  
\be\label{Fcoercive}
c(|A|^2-1)\le F(A)\le \frac{1}{c}(|A|^2+1)
\ee
for all $A\in \Md$ and some $c\in (0,1]$.  It is not hard to verify that these assumptions together imply that $DF$ grows at most linearly
\be\label{LinearBoundDF}
|DF(A)|\le C(|A|+1), \quad A\in \Md.
\ee
Here $C$ depends on the above constants $L$ and $c$. 

There is a natural conservation law associated with solution of the initial value problem \eqref{IVP}: if $\bu:U\times[0,T)\rightarrow\R^d$ is a smooth solution, then 
\begin{align*}
\frac{d}{dt}\int_U \frac{1}{2}|\bu_t|^2+F(D\bu) dx&=\int_U \bu_t\cdot \bu_{tt}+DF(D\bu)\cdot D\bu_t dx\\
&=\int_U \bu_t\cdot \left(\bu_{tt}-\Div DF(D\bu)\right) dx\\
&=0.
\end{align*}
As a result, 
$$
\int_U \frac{1}{2}|\bu_t(x,t)|^2+F(D\bu(x,t)) dx=\int_U \frac{1}{2}|\bh(x)|^2+F(D\bg(x)) dx
$$
for each $t\in[0,T]$. These observations motivate the definition of a weak solution of the initial value problem \eqref{IVP} below.

\par The following definition makes use of space $H^1_0(U;\R^d)$, which we recall is the closure of the smooth test mappings $\bw\in C^\infty_c(U; \R^d)$ in the Sobolev space
$$
H^1(U;\R^d):=\left\{\bw\in L^2(U;\R^d): \bw\;\text{weakly differentiable with}\;\int_U|D\bw|^2dx<\infty\right\}.
$$
In particular, $H^1_0(U;\R^d)\subset H^1(U;\R^d)$ is naturally the subset of functions which ``vanish" on $\partial U$. We also note that $H^1_0(U;\R^d)$ is a Banach space under the norm 
$$
\|\bw\|_{H^1_0(U;\R^d)}=\left(\int_U|D\bw|^2dx\right)^{1/2}
$$
and its continuous dual space is denoted $H^{-1}(U;\R^d)$. We refer to Chapter 5 of \cite{MR2597943} for more on the theory of Sobolev spaces.

\begin{defn} Suppose $\bg\in H^1_0(U;\R^d)$ and $\bh\in L^2(U;\R^d)$.  A measurable mapping $\bu:U\times [0,T)\rightarrow \R^d$ is a {\it weak solution} for the initial value problem \eqref{IVP} provided 
\be\label{Spaces}
\bu\in L^\infty([0,T]; H^1_0(U;\R^d))\quad \text{and}\quad \bu_t\in L^\infty([0,T]; L^2(U;\R^d)),
\ee
\begin{align*}
\int^T_0\int_U \phi_t\cdot \bu_tdxdt+\int_U \phi|_{t=0}\cdot \bh dx=\int^T_0\int_U D\phi\cdot DF(D\bu)dxdt
\end{align*}
for each $\phi\in C^\infty_c(U\times[0,T);\R^d)$, and 
\be\label{InitG}
\bu(\cdot,0)=\bg.
\ee
\end{defn}
\begin{rem}
We note that the initial condition \eqref{InitG} can be set as \eqref{Spaces} implies that $\bu:[0,T)\rightarrow L^2(U;\R^d)$ can be identified with a continuous map. 
\end{rem}
\begin{rem}\label{WeakContRem}
Weak solutions also satisfy
\begin{align*}
\int_U \bw \cdot \bu_t(\cdot,t)dx=\int_U \bw\cdot \bh dx-\int^t_0\int_U D\bw(x)\cdot DF(D\bu(x,\tau))dxd\tau
\end{align*}
for each $t\in[0,T]$ and $\bw\in H^1_0(U;\R^d)$. In particular, 
\be\label{SimpleHminues1bound}
\|\bu_{tt}(\cdot,t)\|_{H^{-1}(U;\R^d)}=\|DF(D\bu(\cdot,t))\|_{L^2(U;\R^d)}
\ee
for almost every $t\in (0,T)$. 
\end{rem}

Unfortunately, it is unknown whether or not weak solutions of the initial value problem \eqref{IVP} exist. So we will work with an alternative notion of solution. Recall that $\Md$ is a complete, separable metric space under the distance $(A_1,A_2)\mapsto |A_1-A_2|$. Therefore, we can consider ${\cal P}(\Md)$ the collection Borel probability measures on this space.

\begin{defn} Suppose $\bg\in H^1_0(U;\R^d)$ and $\bh\in L^2(U;\R^d)$.  A {\it Young measure solution} of the initial value problem \eqref{IVP} 
is a pair $(\bu,\gamma)$, where $\bu:U\times [0,T)\rightarrow \R^d$ is measurable mapping satisfying \eqref{Spaces} and $\gamma=(\gamma_{x,t})_{(x,t)\in U\times [0,T]}$ is a family of Borel probability measures on $\Md$ which satisfy:
\be\label{YMWeakSolnCond}
\int^T_0\int_U \phi_t\cdot \bu_tdxdt+\int_U \phi|_{t=0}\cdot \bh dx=\int^T_0\int_U D\phi\cdot \left(\int_{\Mnd}DF(A)d\gamma_{x,t}(A)\right)dxdt
\ee
for each $\phi\in C^\infty_c(U\times[0,T);\R^d)$, 
\be\label{GradientYMcond}
D\bu(x,t)=\int_{\Mnd}Ad\gamma_{x,t}(A)
\ee
for Lebesgue almost every $(x,t)\in U\times[0,T]$, and 
\be\label{InitG}
\bu(\cdot,0)=\bg.
\ee
\end{defn}
\begin{rem}
Of course if $\gamma_{x,t}=\delta_{D\bu(x,t)}$ for almost every $(x,t)$, then the $\bu$ is a weak solution. 
\end{rem}
We note that Demoulini first verified existence of Young measure solution in \cite{MR1437192} by using an implicit time scheme related to the  
initial value problem \eqref{IVP}. Some other notable works on this existence problem are \cite{MR2075752, MR3093380, MR3482395, MR2000976}.
We specifically mention Rieger's paper \cite{MR2000976} as it reestablished Demoulini's result using an approximation scheme involving 
an initial value problem \eqref{IVPmu} which we will study below.  Alternatively, we will pursue existence via Galerkin's method. 

\subsection{Galerkin's method}
Let $\{\phi_j\}_{j\in \N}\subset L^2(U)$ denote the the orthonormal basis of eigenfunctions of the Laplace operator on $U$ with Dirichlet boundary conditions. That is, 
$$
\begin{cases}
-\Delta\phi_j=\lambda_j\phi_j\quad & \text{in}\; U\\
\hspace{.27in}\phi_j=0\quad & \text{on}\; \partial U
\end{cases}
$$
where the sequence of eigenvalues $\{\lambda_i\}_{i\in \N}$ are positive, nondecreasing and tend to $\infty$.  We also have 
$$
\int_U D\phi_j\cdot D\phi_kdx=\int_U (-\Delta\phi_j) \phi_kdx=\lambda_j\int_U \phi_j \phi_kdx=\lambda_j\delta_{jk}.
$$
It follows that $\{\phi_j\}_{j\in \N}\subset H^1_0(U)$ is an orthogonal basis.  With these functions, we can set
$$
\bg^N:=\sum^N_{j=1}\bg_j\phi_j\quad \text{and}\quad \bh^N:=\sum^N_{j=1}\bh_j\phi_j
$$
for each $N\in \N$. Here $\bg_j=\int_U\bg \phi_jdx$ and $\bh_j=\int_U\bh \phi_jdx$. In particular, notice that $\bg^N\rightarrow \bg$ in $H^1_0(U;\R^d)$ and $\bh^N\rightarrow \bh$ in $L^2(U;\R^d)$ as $N\rightarrow\infty$. 

\par We will now use Proposition \ref{ODENewtonProp} to generate an approximation sequence to the Young measure solution of \eqref{IVP} that we seek.  
\begin{lem}
For each $N\in \N$, there is a weak solution $\bu^N$ of \eqref{IVP} with $\bg^N$ and $\bh^N$ replacing $\bg$ and $\bh$, respectively. 
\end{lem}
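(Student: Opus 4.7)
My plan is to construct $\bu^N$ via a Galerkin-type ansatz in the eigenbasis $\{\phi_k\}_{k\le N}$ and then appeal to Proposition \ref{ODENewtonProp} for the resulting coefficient ODEs. Specifically, I would seek
$$
\bu^N(x,t) = \sum_{k=1}^N {\bf c}_k(t)\phi_k(x),
$$
with each ${\bf c}_k:[0,\infty)\to\R^d$ of class $C^2$, ${\bf c}_k(0)=\bg_k$ and $\dot{{\bf c}}_k(0)=\bh_k$. Testing the weak equation against $\phi_k(x)\,e_i$ for $k\le N$ and $i=1,\dots,d$, and formally integrating by parts in time, leads to the coupled system
$$
\ddot{{\bf c}}_k(t) = -\int_U DF(D\bu^N(x,t))\, D\phi_k(x)\, dx,\qquad k=1,\dots,N.
$$
The key observation is that this is exactly a Newton system of the form studied in Section \ref{ODEsect} (with all masses equal to one) associated to the potential
$$
V({\bf c}_1,\dots,{\bf c}_N) := \int_U F\!\left(\sum_{j=1}^N {\bf c}_j\otimes D\phi_j(x)\right)dx,
$$
since a direct computation yields $D_{{\bf c}_k} V = \int_U DF(D\bu^N)\, D\phi_k\, dx$.

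Next I would verify the hypotheses of Proposition \ref{ODENewtonProp}. The $C^1$-regularity of $V$ follows from $F\in C^1(\Md)$ together with the linear growth bound \eqref{LinearBoundDF}, which legitimizes differentiation under the integral by dominated convergence. For semiconvexity, the Andrews--Ball condition says $F(\cdot)+(L/2)|\cdot|^2$ is convex on $\Md$, and since ${\bf c}\mapsto D\bu^N(x,\cdot)$ is linear for each $x$, the map
$$
({\bf c}_1,\dots,{\bf c}_N)\;\longmapsto\; V({\bf c}) + \frac{L}{2}\int_U |D\bu^N|^2\, dx
$$
is convex. The eigenfunction orthogonality $\int_U D\phi_j\cdot D\phi_k\, dx = \lambda_j\delta_{jk}$ gives $\int_U|D\bu^N|^2\, dx = \sum_{k=1}^N \lambda_k|{\bf c}_k|^2$. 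Adding the nonnegative convex quadratic $\sum_{k=1}^N \tfrac{L(\lambda_N-\lambda_k)}{2}|{\bf c}_k|^2$ shows that $V({\bf c}) + \tfrac{L\lambda_N}{2}\sum_k|{\bf c}_k|^2$ is convex, so $V$ is semiconvex on $(\R^d)^N$ with constant $L\lambda_N$.

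Proposition \ref{ODENewtonProp} then produces global $C^2$ trajectories ${\bf c}_1,\dots,{\bf c}_N$ on $[0,\infty)$. Assembling $\bu^N(x,t) = \sum_k {\bf c}_k(t)\phi_k(x)$, the regularity \eqref{Spaces} is inherited from the locally uniform bounds on ${\bf c}_k$ and $\dot{{\bf c}}_k$ supplied by the proof of that proposition. The initial conditions match since $\bg^N,\bh^N$ already lie in the span of $\{\phi_k\}_{k\le N}$, and the weak formulation follows by expanding an arbitrary test function $\phi\in C^\infty_c(U\times[0,T);\R^d)$ along the eigenbasis, integrating by parts in $t$, and invoking the coefficient ODE pointwise in time. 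The main technical point I anticipate is pinning down the correct potential $V$ and computing its semiconvexity constant $L\lambda_N$ via the eigenvalue orthogonality; once the Newton-system structure is in place, Proposition \ref{ODENewtonProp} essentially closes the existence argument.
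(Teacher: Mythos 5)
Your proposal is correct and follows essentially the same approach as the paper: the same Galerkin ansatz in the eigenbasis, the same reduction to a Newton system with potential $V({\bf c})=\int_U F(\sum_j {\bf c}_j\otimes D\phi_j)\,dx$, the same use of eigenfunction orthogonality to compute $\int_U|D\bu^N|^2 = \sum_k\lambda_k|{\bf c}_k|^2$, and the same conclusion that $V$ is semiconvex with constant $L\lambda_N$, so that Proposition \ref{ODENewtonProp} applies.
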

\begin{proof}
It suffices to find a weak solution of the form 
$$
\bu^N(x,t)=\sum^N_{j=1}\ba_j(t)\phi_j(x)
$$
$x\in U$, $t\in [0,T]$ for appropriate mappings $\ba_j:[0,T]\rightarrow \R^d$ $(j=1,\dots, N)$. In particular, this ansatz is a weak solution if and only if 
\be\label{ODEVeeM}
\begin{cases}
\ddot \ba_j(t)=-D_{y_j}V(\ba_1(t),\dots, \ba_N(t)),\quad t\in (0,T)\\
\ba_j(0)=\bg_j\\
\dot \ba_j(0)=\bh_j
\end{cases}
\ee
$(j=1,\dots, N)$.  Here 
\be\label{discreteVee}
V(y_1,\dots, y_N):=\int_U F\left(\sum^N_{i=1}y_j\otimes D\phi_j(x)\right)dx
\ee
for $y_1,\dots,y_N\in \R^d$.

\par Since $\int_U D\phi_j\cdot D\phi_kdx=\lambda_j\delta_{jk}$, we may compute 
$$
\int_U\left|\sum^m_{j=1}y_j\otimes D\phi_j(x)\right|^2dx=\sum^m_{j=1}\lambda_j|y_j|^2.
$$
Moreover, we have that
\begin{align*}
&V(y_1,\dots, y_m)+\frac{L\lambda_N}{2}\sum^N_{j=1}|y_j|^2\\
&=\int_U F\left(\sum^N_{i=1}y_j\otimes D\phi_j(x)\right) +\frac{L}{2}\left|\sum^N_{j=1}y_j\otimes D\phi_j(x)\right|^2dx +\frac{L}{2}\sum^N_{j=1}(\lambda_N-\lambda_j)|y_j|^2
\end{align*}
is convex. Thus, $V: (\R^d)^N\rightarrow \R$ is semiconvex. It then follows from Proposition \ref{ODENewtonProp} that a solution $\ba_1,\dots, \ba_m\in C^2([0,T];\R^d)$ of the multidimensional ODE system \eqref{ODEVeeM} exists. 
\end{proof}

\par As the $\phi_j$ are each smooth on $U$, $\bu^N$ is classical solution of the IVP \eqref{IVP}.  By the conservation of energy we then have 
\be\label{ConservationEnergyM}
\int_U \frac{1}{2}|\bu^N_t(x,t)|^2+F(D\bu^N(x,t)) dx=\int_U \frac{1}{2}|\bh^N(x)|^2+F(D\bg^N(x)) dx
\ee
for $t\in [0,T]$. Since the right hand side above is bounded uniformly in $N\in \N$, $(\bu^N)_{N\in \N}$ is bounded in the space determined by \eqref{Spaces}.  We now assert that $(\bu^N)_{N\in \N}$ has a subsequence that converges in various senses to a mapping $\bu$ which satisfies \eqref{Spaces}. 

\begin{lem}\label{umkCompactnessLem}
There is a subsequence $(\bu^{N_k})_{k\in \N}$ and a measurable mapping $\bu$ which satisfies \eqref{Spaces} for which
$$
\begin{cases}
\bu^{N_k}\rightarrow \bu\; \text{in}\; C([0,T];L^2(U;\R^d))\\\\
\bu^{N_k}(\cdot,t)\rightharpoonup \bu(\cdot, t)\; \text{in}\; H^1_0(U;\R^d)\;\text{for all}\; t\in [0,T]\\\\
\bu^{N_k}_t\rightarrow \bu_t\; \text{in}\; C([0,T];H^{-1}(U;\R^d))\\\\
\bu^{N_k}_t(\cdot,t)\rightharpoonup \bu_t(\cdot,t)\; \text{in}\; L^2(U;\R^d)\;\text{for all}\; t\in [0,T].
\end{cases}
$$
as $k\rightarrow\infty$.  Moreover, $\bu(\cdot,0)=\bg$ and $\bu_t(\cdot,0)=\bh$.
\end{lem}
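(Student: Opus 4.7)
The plan is to run an Arzel\`a--Ascoli/Aubin--Lions style argument twice: once for $\bu^N$ viewed in $C([0,T];L^2(U;\R^d))$, and once for $\bu^N_t$ viewed in $C([0,T];H^{-1}(U;\R^d))$. The bounds come from energy conservation, coercivity of $F$, and the linear growth of $DF$, while compactness in the spatial variable is provided by the Rellich--Kondrachov embeddings $H^1_0(U;\R^d)\hookrightarrow\hookrightarrow L^2(U;\R^d)\hookrightarrow\hookrightarrow H^{-1}(U;\R^d)$.

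First I would extract uniform bounds. From the conservation identity \eqref{ConservationEnergyM} together with the lower bound in \eqref{Fcoercive} and the facts that $\bg^N\to\bg$ in $H^1_0(U;\R^d)$ and $\bh^N\to\bh$ in $L^2(U;\R^d)$, one obtains
\[
\sup_{N}\sup_{t\in[0,T]}\Big(\|\bu^N(\cdot,t)\|_{H^1_0(U;\R^d)}+\|\bu^N_t(\cdot,t)\|_{L^2(U;\R^d)}\Big)<\infty.
\]
Combining \eqref{SimpleHminues1bound} with the linear growth bound \eqref{LinearBoundDF} then yields
\[
\sup_N\sup_{t\in[0,T]}\|\bu^N_{tt}(\cdot,t)\|_{H^{-1}(U;\R^d)}<\infty.
\]

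Next I would apply Arzel\`a--Ascoli to $(\bu^N)$ in $C([0,T];L^2(U;\R^d))$. Equicontinuity follows from
\[
\|\bu^N(\cdot,t)-\bu^N(\cdot,s)\|_{L^2(U;\R^d)}\le (t-s)^{1/2}\Big(\int_s^t\|\bu^N_t(\cdot,\tau)\|_{L^2}^2d\tau\Big)^{1/2},
\]
and pointwise precompactness in $L^2(U;\R^d)$ from the uniform $H^1_0$ bound together with the compact embedding $H^1_0\hookrightarrow\hookrightarrow L^2$. This produces a subsequence $(\bu^{N_k})$ and a continuous $\bu:[0,T]\to L^2(U;\R^d)$ with $\bu^{N_k}\to\bu$ in $C([0,T];L^2(U;\R^d))$. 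Exactly analogously, applying Arzel\`a--Ascoli to $(\bu^{N_k}_t)$ in $C([0,T];H^{-1}(U;\R^d))$---with equicontinuity supplied by the $H^{-1}$ bound on $\bu^{N_k}_{tt}$ and pointwise precompactness by $L^2\hookrightarrow\hookrightarrow H^{-1}$---I extract a further subsequence (not relabelled) such that $\bu^{N_k}_t\to \bw$ in $C([0,T];H^{-1}(U;\R^d))$ for some $\bw$. Identifying $\bw=\bu_t$ is a matter of passing to the limit in the distributional identity $\int_0^T\varphi'(t)\bu^{N_k}(\cdot,t)\,dt=-\int_0^T\varphi(t)\bu^{N_k}_t(\cdot,t)\,dt$ for scalar $\varphi\in C^\infty_c(0,T)$, since weak derivatives commute with $H^{-1}$ limits.

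Finally I would upgrade to the pointwise weak convergences. For each $t\in[0,T]$, $(\bu^{N_k}(\cdot,t))$ is bounded in the reflexive space $H^1_0(U;\R^d)$, so every subsequence has a weakly convergent sub-subsequence; its limit must coincide with $\bu(\cdot,t)$ because the two agree in $L^2$. Uniqueness of the weak limit then forces $\bu^{N_k}(\cdot,t)\rightharpoonup\bu(\cdot,t)$ in $H^1_0(U;\R^d)$, and in particular $\bu$ satisfies \eqref{Spaces}. The same argument with $L^2$ in place of $H^1_0$ and $H^{-1}$ in place of $L^2$ gives $\bu^{N_k}_t(\cdot,t)\rightharpoonup\bu_t(\cdot,t)$ in $L^2(U;\R^d)$ for every $t\in[0,T]$. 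The initial conditions $\bu(\cdot,0)=\bg$ and $\bu_t(\cdot,0)=\bh$ follow because $\bu^{N_k}(\cdot,0)=\bg^{N_k}\to\bg$ in $L^2$ and $\bu^{N_k}_t(\cdot,0)=\bh^{N_k}\to\bh$ in $L^2$, and both convergences are compatible with the stronger modes already established at $t=0$. The one technical subtlety---and the only step I would flag as requiring real care---is to make sure the two Arzel\`a--Ascoli extractions are carried out compatibly (the second on the already-chosen subsequence of the first) so that $\bu$ and $\bw=\bu_t$ emerge from the same subsequence.
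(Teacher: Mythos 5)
Your proposal is correct and follows essentially the same argument as the paper: uniform energy bounds plus coercivity, Arzel\`a--Ascoli in $C([0,T];L^2)$ for $\bu^N$ (equicontinuity from the $L^2$ bound on $\bu^N_t$, pointwise precompactness from $H^1_0\hookrightarrow\hookrightarrow L^2$), a second Arzel\`a--Ascoli application in $C([0,T];H^{-1})$ for $\bu^N_t$ (equicontinuity from \eqref{SimpleHminues1bound} and \eqref{LinearBoundDF}, precompactness from $L^2\hookrightarrow\hookrightarrow H^{-1}$), and weak-limit uniqueness to upgrade to pointwise weak convergences. Your explicit distributional identification of $\bw=\bu_t$ is a small completeness refinement that the paper leaves implicit, not a different route.
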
 
\begin{proof}
By the \eqref{Fcoercive} and \eqref{ConservationEnergyM}, 
\be\label{UnifBoundNormsM}
\sup_{N\in \N}\sup_{0\le t\le T}\int_U|\bu^N_t(x,t)|^2 +|D\bu^N(x,t)|^2dx<\infty.
\ee
It follows that the sequence of functions $\bu^N:[0,T]\rightarrow L^2(U;\R^d)$ $(N\in \N)$ is equicontinuous and the sequence $(\bu^N(\cdot,t))_{N\in\N}\subset L^2(U;\R^d)$ is precompact for each $t\in [0,T]$. By the Arzel\`a-Ascoli theorem, there is a subsequence $(\bu^{N_k})_{k\in \N}$ that converges uniformly to a continuous $\bu:[0,T] \rightarrow L^2(U;\R^d).$   For $t\in (0,T]$, $(\bu^{N_k}(\cdot,t))_{k\in\N}\subset H^1_0(U;\R^d)$ is bounded and thus has a weakly convergent subsequence. However, any weak limit of this sequence must be equal to $\bu(\cdot, t)$, and so the entire sequence must converge weakly in $H^1_0(U;\R^d)$ to this mapping. As a result, 
$$
\bu^{N_k}(\cdot,t)\rightharpoonup \bu(\cdot, t)\; \text{in}\; H^1_0(U;\R^d)
$$
for each $ t\in [0,T]$.

\par In view of \eqref{SimpleHminues1bound}
$$
\|\bu^N_{tt}(\cdot,t)\|_{H^{-1}(U;\R^d)}=\|DF(D\bu^N(\cdot,t))\|_{L^2(U;\R^d)}.
$$
Combining this with \eqref{LinearBoundDF} and \eqref{UnifBoundNormsM}, we see that $\bu^N_t:[0,T]\rightarrow H^{-1}(U;\R^d)$ is equicontinuous. The uniform bound \eqref{UnifBoundNormsM} also implies this sequence is pointwise bounded in $L^2(U;\R^d)$ which is a compact subspace of $H^{-1}(U;\R^d)$. As a result, there is a subsequence (not relabeled)  such that $\bu^{N_k}_t(\cdot,t)\rightarrow \bu_t(\cdot,t)$ in $H^{-1}(U;\R^d)$ uniformly in $t\in [0,T]$. What's more,  as
this sequence is pointwise uniformly bounded in $L^2(U;\R^d)$, $\bu^{N_k}_t(\cdot,t)\rightharpoonup\bu_t(\cdot,t)$ in $L^2(U;\R^d)$ for each $t\in [0,T]$.  It is then clear that $\bu(\cdot, 0)=\bg$ and $\bu_t(\cdot, 0)=\bh$ by weak convergence; and by \eqref{UnifBoundNormsM}, $\bu$ also satisfies \eqref{Spaces}.
\end{proof}

\par We are now  ready to verify the existence of a Young measure solution.  We will use the above compactness assertion and \eqref{Prokhorov}.
\begin{thm}
For each $\bg\in H^1_0(U;\R^d)$ and $\bh\in L^2(U;\R^d)$, there exists a Young measure solution pair $(\bu,\gamma)$. 
\end{thm}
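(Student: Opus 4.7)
The plan is to augment the compactness afforded by Lemma \ref{umkCompactnessLem} with the fundamental theorem on Young measures to construct the family $\gamma$. Let $(\bu^{N_k})_{k\in\N}$ be the subsequence from that lemma. By the uniform bound \eqref{UnifBoundNormsM}, $(D\bu^{N_k})$ is bounded in $L^2(U\times[0,T];\Md)$, so in particular $(|D\bu^{N_k}|)$ is equi-integrable on $U\times[0,T]$. Passing to a further subsequence (not relabeled), the fundamental theorem on Young measures produces a family $\gamma=(\gamma_{x,t})_{(x,t)\in U\times[0,T]}$ of Borel probability measures on $\Md$ with the property that whenever a continuous $\Phi:\Md\to\R^m$ satisfies $|\Phi(A)|\le C(1+|A|)$, one has
\begin{equation*}
\Phi(D\bu^{N_k})\rightharpoonup\left((x,t)\mapsto\int_{\Md}\Phi(A)\,d\gamma_{x,t}(A)\right)\quad\text{weakly in }L^1(U\times[0,T]).
\end{equation*}

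To establish \eqref{GradientYMcond}, I would apply this representation with the linear choice $\Phi(A)=A$, obtaining $D\bu^{N_k}\rightharpoonup\int_{\Md}A\,d\gamma_{x,t}(A)$ in $L^1$. On the other hand, the weak convergence $\bu^{N_k}(\cdot,t)\rightharpoonup\bu(\cdot,t)$ in $H^1_0(U;\R^d)$ pointwise in $t$, combined with the uniform $L^2$ bound, forces $D\bu^{N_k}\rightharpoonup D\bu$ weakly in $L^2(U\times[0,T];\Md)$, hence weakly in $L^1$. Uniqueness of the weak limit then gives $D\bu(x,t)=\int_{\Md}A\,d\gamma_{x,t}(A)$ for Lebesgue a.e.\ $(x,t)$.

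For \eqref{YMWeakSolnCond} I would start from the Galerkin identity
\begin{equation*}
\int_0^T\!\!\int_U \phi_t\cdot \bu^{N_k}_t\,dx\,dt+\int_U \phi|_{t=0}\cdot\bh^{N_k}\,dx=\int_0^T\!\!\int_U D\phi\cdot DF(D\bu^{N_k})\,dx\,dt,
\end{equation*}
valid for each $\phi\in C^\infty_c(U\times[0,T);\R^d)$, and pass to the limit. The first term on the left goes to its analog with $\bu_t$ via the pointwise weak convergence $\bu^{N_k}_t(\cdot,t)\rightharpoonup\bu_t(\cdot,t)$ in $L^2(U;\R^d)$ from Lemma \ref{umkCompactnessLem} together with dominated convergence, whose majorant is furnished by the uniform $L^\infty([0,T];L^2(U;\R^d))$ bound in \eqref{UnifBoundNormsM}. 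The boundary term converges because $\bh^{N_k}\to\bh$ in $L^2(U;\R^d)$. For the right-hand side the linear growth estimate \eqref{LinearBoundDF} combined with the $L^2$ bound on $D\bu^{N_k}$ makes $(DF(D\bu^{N_k}))$ bounded in $L^2(U\times[0,T];\Md)$, and therefore equi-integrable in $L^1$. Invoking the Young measure representation with $\Phi=DF$ yields $DF(D\bu^{N_k})\rightharpoonup \int_{\Md}DF(A)\,d\gamma_{x,t}(A)$ weakly in $L^1$, and since $D\phi\in L^\infty$ this is enough to close the limit. The initial condition $\bu(\cdot,0)=\bg$ has already been verified in Lemma \ref{umkCompactnessLem}.

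The main obstacle is precisely what motivates the Young measure formulation: we cannot hope to show that $D\bu^{N_k}$ converges strongly in $L^2$, because this would force $\gamma_{x,t}=\delta_{D\bu(x,t)}$ and produce an honest weak solution, which is not known to exist for general nonconvex $F$. The Young measure is the device that records the oscillations of $(D\bu^{N_k})$ in a form compatible with the nonlinearity $DF$, and the equi-integrability of $(DF(D\bu^{N_k}))$ inherited through \eqref{LinearBoundDF} from the conservation of energy is the only ingredient beyond the Galerkin compactness of Lemma \ref{umkCompactnessLem}.
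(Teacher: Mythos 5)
Your proposal is correct and is, in substance, the same argument as the paper's: both start from the Galerkin subsequence of Lemma \ref{umkCompactnessLem}, use the uniform $L^\infty_t L^2_x$ bound on $D\bu^N$ from the energy identity, exploit the linear growth of $DF$ for the nonlinear term, identify \eqref{GradientYMcond} from weak $L^2$ convergence of $D\bu^{N_k}$ to $D\bu$, and pass to the limit in the Galerkin identity. The one genuine difference is in how the Young measure is produced. You invoke the fundamental theorem on Young measures as a black box to get the family $(\gamma_{x,t})$ directly from the $L^2$-bounded sequence $(D\bu^{N_k})$, together with the weak-$L^1$ representation for compositions $\Phi(D\bu^{N_k})$ with linearly growing $\Phi$. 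The paper instead builds this object from scratch: it embeds the gradients as probability measures $\eta^N$ on $\overline{U}\times[0,T]\times\Md$ (normalized space--time Lebesgue measure pushed onto the graph of $D\bu^N$), proves tightness from the coercivity \eqref{Fcoercive} and the Prokhorov-type criterion \eqref{Prokhorov}, extracts a narrow limit $\eta^\infty$, observes that the extra $L^2$ moment upgrades narrow convergence to test functions of linear growth, checks that the $(x,t)$-marginal of $\eta^\infty$ is normalized Lebesgue measure, and then applies the disintegration theorem to produce $(\gamma_{x,t})$. This is precisely the standard proof of the fundamental theorem you cite; the paper simply keeps the argument self-contained by routing it through the narrow-convergence machinery set up in the preliminaries rather than appealing to an external reference. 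The one minor stylistic divergence is that the paper passes to the limit in the $\bu_t$-term using the strong $C([0,T];H^{-1})$ convergence, while you use pointwise-in-$t$ weak $L^2$ convergence plus dominated convergence; both are valid given Lemma \ref{umkCompactnessLem}.
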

\begin{proof}
First, define the sequence of Borel probability measures $(\eta^N)_{N\in \N}\in {\cal P}(\overline{U}\times[0,T]\times \Md)$ via
$$
\int^T_0\int_{\overline{U}}\int_{\Md}g(x,t,A)d\eta^N(x,t,A):=
\frac{1}{T|U|}\int^T_0\int_{U}g(x,t,D\bu^N(x,t))dxdt
$$
for each $g\in C_b(\overline{U}\times[0,T]\times \Md)$.  Since $F$ is coercive \eqref{Fcoercive},  
\begin{align*}
\int^T_0\int_{\overline{U}}\int_{\Md}|A|^2d\eta^N(x,t,A)&=\frac{1}{T|U|}\int^T_0\int_{U}|D\bu|^2dxdt\\
&\le \frac{1}{T|U|} \int^T_0\int_{U}1+\frac{1}{c}F(D\bu^N)dxdt\\
&= 1+\frac{1}{cT|U|}\int^T_0\int_{U}F(D\bu^N)dxdt\\
&\le  1 +\frac{1}{c|U|} \int_U \frac{1}{2}|\bh^N|^2+F(D\bg^N) dx\\
&\le  1 +\frac{1}{c|U|} \int_U \frac{1}{2}|\bh^N|^2+c\left(|D\bg^N|^2+1\right) dx
\end{align*}
which is bounded uniformly in $N$. 

\par As the function 
$$
\overline{U}\times[0,T]\times \Md\rightarrow \R; (x,t,A)\mapsto |A|^2
$$
has compact sublevel sets, the sequence $(\eta^N)_{N\in\N}$ is tight. By \eqref{Prokhorov}, this sequence has a narrowly convergent subsequence which we will label $(\eta^{N_k})_{k\in\N}$. In particular, there is $\eta^\infty\in {\cal P}(\overline{U}\times[0,T]\times \Md)$ such that 
\be\label{narrowConvofEetaM}
\lim_{k\rightarrow\infty}\int^T_0\int_{\overline{U}}\int_{\Md}g(x,t,A)d\eta^{N_k}(x,t,A)=
\int^T_0\int_{\overline{U}}\int_{\Md}g(x,t,A)d\eta^\infty(x,t,A)
\ee
for each $g\in C_b(\overline{U}\times[0,T]\times \Md)$.  Moreover, as 
$$
\sup_N\int^T_0\int_{\overline{U}}\int_{\Md}|A|^2d\eta^N(x,t,A)<\infty,
$$
the limit \eqref{narrowConvofEetaM} actually holds for each continuous $g$ which satisfies $|g(x,t,A)|\le C(|A|+1)$.

\par The weak solution condition equation for $\bu^N$ may be written
\begin{align*}
\int^T_0\int_U \phi_t\cdot \bu^N_tdxdt+\int_U \phi|_{t=0}\cdot \bh^N dx&=\int^T_0\int_U D\phi\cdot DF(D\bu^N)dxdt\\
&=T|U|\int^T_0\int_{\overline{U}}\int_{\Md} D\phi\cdot DF(A) d\eta^N(x,t,A).
\end{align*}
Here $\phi\in C_c^\infty(U\times[0,T);\R^d)$. As $DF: \Md\rightarrow \Md$ continuous and grows linearly, we can pass to the limit as $N_k\rightarrow\infty$ to get 
\be\label{almostYMWeakSolnCond}
\int^T_0\int_U \phi_t\cdot \bu_tdxdt+\int_U \phi|_{t=0}\cdot \bh dx=T|U|\int^T_0\int_{\overline{U}}\int_{\Md} D\phi\cdot DF(A) d\eta^\infty(x,t,A).
\ee
Here we made use of the convergence detailed in Lemma \ref{umkCompactnessLem}.

\par We also have
$$
\int^T_0\int_{\overline{U}}\int_{\Md}g(x,t)d\eta^\infty(x,t,A):=
\frac{1}{T|U|}\int^T_0\int_{U}g(x,t)dxdt
$$
for each $g\in C_b(\overline{U}\times[0,T])$. By the disintegration of probability measures (Theorem 5.3.1 of \cite{AGS}), there is a family of Borel probability measures 
$$
(\gamma_{x,t})_{(x,t)\in U\times(0,T)}\subset  {\cal P}(\Md)
$$
such that 
$$
\int^T_0\int_{\overline{U}}\int_{\Md}g(x,t,A)d\eta^\infty(x,t,A)=\frac{1}{T|U|}\int^T_0\int_{U}\left(\int_{\Md}g(x,t,A)d\gamma_{x,t}(A)\right)dxdt.
$$
Combining this observation with \eqref{almostYMWeakSolnCond}, we conclude that condition \eqref{YMWeakSolnCond} is satisfied.

\par Also recall that $(D\bu^{N_k})_{k\in \N}$ converges weakly to $D\bu$ in $L^2(U\times[0,T];\Md)$. It then follows that for each continuous $G:\overline{U}\times[0,T]\rightarrow \R$,
\begin{align*}
\int^T_0\int_{U}G(x,t)\cdot \left(\int_{\Md}Ad\gamma_{x,t}(A)\right)dxdt&=\int^T_0\int_{\overline{U}}\int_{\Md}G(x,t)\cdot A\; d\eta^\infty(x,t,A)\\
&=\lim_{k\rightarrow\infty}\int^T_0\int_{\overline{U}}\int_{\Md}G(x,t)\cdot A\; d\eta^{N_k}(x,t,A)\\
&=\lim_{k\rightarrow\infty}\int^T_0\int_{U}G\cdot D\bu^{N_k}dxdt\\
&=\int^T_0\int_{U}G\cdot D\bu dxdt.
\end{align*}
That is, \eqref{GradientYMcond} holds. We finally recall that Lemma \ref{umkCompactnessLem} gives that $\bu(\cdot, 0)=\bg$ and conclude that the pair $(\bu,\gamma)$ is a Young measure solution of \eqref{IVP}.
\end{proof}

\subsection{Analysis of damped model}
The last initial value problem that we will consider is
\be\label{IVPmu}
\begin{cases}
\bu_{tt}=\text{div}DF(D\bu)+\mu \Delta \bu_t\; &\text{in}\; U\times(0,T)\\
\hspace{.1in}\bu=\bzero&\text{on}\; \partial U\times[0,T)\\
\hspace{.1in}\bu=\bg&\text{on}\; U\times\{0\}\\
\hspace{.05in}\bu_t=\bh&\text{on}\; U\times\{0\}.
\end{cases}
\ee
Here $\mu>0$ is known as a damping parameter as the energy of smooth solutions dissipates
\be
\frac{d}{dt}\int_U \frac{1}{2}|\bu_t|^2+F(D\bu) dx=-\mu\int_U|D\bu_t|^2dx.
\ee
In particular,   
\be
\int_U \frac{1}{2}|\bu_t(x,t)|^2+F(D\bu(x,t)) dx+\mu\int^t_0\int_U|D\bu_t|^2dxds=\int_U \frac{1}{2}|\bh|^2+F(D\bg)dx
\ee
for $t\ge 0$.  Weak solutions of \eqref{IVPmu} are defined as follows. 
\begin{defn} Suppose $\bg\in H^1_0(U;\R^d)$ and $\bh\in L^2(U;\R^d)$.  A measurable mapping $\bu:U\times [0,T)\rightarrow \R^d$ is a {\it weak solution} for the initial value problem \eqref{IVPmu} provided 
\be\label{SpacesMu}
\bu\in L^\infty([0,T]; H^1_0(U;\R^d))\quad \text{and}\quad  \bu_t\in L^\infty([0,T]; L^2(U;\R^d))\cap L^2([0,T]; H^1_0(U;\R^d)).
\ee
\be\label{WeadSolnDamped}
\int^T_0\int_U \phi_t\cdot \bu_tdxdt+\int_U \phi|_{t=0}\cdot \bh dx=\int^T_0\int_U D\phi\cdot\left( DF(D\bu)+\mu D\bu_t\right)dxdt
\ee
for each $\phi\in C^\infty_c(U\times[0,T);\R^d)$, and 
\be\label{InitG}
\bu(\cdot,0)=\bg.
\ee
\end{defn}

\par We note that Andrews and Ball verified existence of a weak solution when $d=1$ \cite{MR657784}. This result was generalized to all 
$d>1$ by Dolzmann and Friesecke \cite{MR1434040}. Dolzmann and Friesecke used an implicit time scheme and wondered if a Galerkin 
type method could be used instead. Later, Feireisl and Petzeltov\'a showed that this can be accomplished \cite{MR1873012}; see also the following recent papers \cite{MR3093380, MR2421043,  MR1169897} which involved similar existence problems and results.

\par We will also use Galerkin's method and give a streamlined proof of the existence of a weak solution for given initial conditions. This involves finding a weak solution $\bu^N$ with initial conditions $\bu^N(\cdot, 0)=\bg^N$ and $\bu^N_t(\cdot, 0)=\bh^N$ for each $N\in \N$.  An easy computation shows that it is enough to find a solution of the form
$$
\bu^N(x,t)=\sum^N_{j=1}\ba_j(t)\phi_j(x).
$$
where the $\ba_1,\dots, \ba_N$ satisfy 
\be
\begin{cases}
\ddot \ba_j(t)=-D_{y_j}V(\ba_1(t),\dots, \ba_m(t)) -\mu\lambda_j\dot \ba_j(t),\quad t\in (0,T)\\
\ba_j(0)=\bg_j\\
\dot \ba_j(0)=\bh_j
\end{cases}
\ee
for $j=1,\dots, N$.   Here $V$ is defined as in \eqref{discreteVee} and the corresponding energy of this ODE system is  
$$
\sum^N_{j=1}\frac{1}{2}|\dot \ba_j(t)|^2 + V(\ba_1(t),\dots, \ba_m(t))+\mu\int^t_0\sum^m_{j=1}\lambda_j|\dot \ba_j(s)|^2ds 
=\sum^N_{j=1}\frac{1}{2}|\bh_j|^2 + V(\bg_1,\dots, \bg_m)
$$

\par A minor variation of the method we used to verify existence for Newton systems in Proposition \ref{ODENewtonProp} can be used to show that the
above system has a solution $\ba_1,\dots, \ba_N\in C^2([0,T];\R^d)$.  We leave the details to the reader. The weak solution $\bu^N$ obtained
 is a classical solution and so
\be\label{ConsUmMu}
\int_U \frac{1}{2}|\bu^N_t(x,t)|^2+F(D\bu^N(x,t)) dx+\mu\int^t_0\int_U|D\bu_t^N|^2dxds= \int_U \frac{1}{2}|\bh^N|^2+F(D\bg^N) dx
\ee
for $t\in [0,T]$.  We will now focus on using the extra gain in the energy to verify the existence of a weak solution.

\begin{lem}\label{LastCompactUNlemma}
There is a subsequence $(\bu^{N_k})_{k\in \N}$ and a measurable mapping $\bu$ which satisfies \eqref{SpacesMu} for which
$$
\begin{cases}
\bu^{N_k}\rightarrow \bu\; \text{in}\; C([0,T];L^2(U;\R^d))\\\\
\bu^{N_k}(\cdot,t)\rightharpoonup \bu(\cdot, t)\; \text{in}\; H^1_0(U;\R^d)\;\text{for all}\; t\in [0,T]\\\\
\bu^{N_k}_t\rightarrow \bu_t\; \text{in}\; C([0,T];H^{-1}(U;\R^d))\\\\
\bu^{N_k}_t(\cdot,t)\rightharpoonup \bu_t(\cdot,t)\; \text{in}\; L^2(U;\R^d)\;\text{for all}\; t\in [0,T]\\\\
\bu^{N_k}_t\rightarrow \bu_t\; \text{in}\; L^2(U\times[0,T];\R^d)\\\\
\bu^{N_k}\rightarrow \bu\; \text{in}\; L^2([0,T];H^1_0(U;\R^d))
\end{cases}
$$
as $k\rightarrow\infty$.  Moreover, $\bu(\cdot,0)=\bg$ and $\bu_t(\cdot,0)=\bh$.
\end{lem}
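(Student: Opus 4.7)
The plan is to mirror the proof of Lemma \ref{umkCompactnessLem} for the first four conclusions, and then exploit the extra regularity supplied by the damping term in \eqref{ConsUmMu} to extract the two additional convergences. The energy identity \eqref{ConsUmMu} together with the coercivity \eqref{Fcoercive} yields exactly the same uniform bound
\[
\sup_{N\in\N}\sup_{0\le t\le T}\int_U\bigl(|\bu^N_t(x,t)|^2+|D\bu^N(x,t)|^2\bigr)\,dx<\infty
\]
that was used in the undamped case. So one obtains the first four convergences verbatim as in Lemma \ref{umkCompactnessLem}: equicontinuity of $\bu^N:[0,T]\to L^2(U;\R^d)$ combined with Arzel\`a--Ascoli delivers uniform convergence in $C([0,T];L^2(U;\R^d))$ and pointwise weak convergence in $H^1_0(U;\R^d)$, while \eqref{LinearBoundDF} and \eqref{SimpleHminues1bound} (whose damped analogue has the extra harmless term $\mu\,\Div D\bu^N_t$) yield equicontinuity of $\bu^N_t:[0,T]\to H^{-1}(U;\R^d)$ and hence uniform $H^{-1}$ convergence together with pointwise weak convergence in $L^2(U;\R^d)$.

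The new input is the dissipative term in \eqref{ConsUmMu}: it provides the uniform bound $\mu\int_0^T\int_U|D\bu^N_t|^2dxds\le C$, i.e.\ $(\bu^N_t)$ is bounded in $L^2([0,T];H^1_0(U;\R^d))$. Combined with the bound on $\bu^N_{tt}$ in $L^2([0,T];H^{-1}(U;\R^d))$ coming from the PDE and the linear growth \eqref{LinearBoundDF}, the Aubin--Lions--Simon compactness theorem, applied to $(\bu^N_t)$ with the triple $H^1_0(U;\R^d)\subset\subset L^2(U;\R^d)\subset H^{-1}(U;\R^d)$, yields a further subsequence (not relabeled) along which $\bu^N_t\to\bu_t$ strongly in $L^2([0,T];L^2(U;\R^d))=L^2(U\times[0,T];\R^d)$. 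This is the fifth convergence.

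For the final convergence I would use the integral representation $\bu^{N_k}(x,t)=\bg^{N_k}(x)+\int_0^t\bu^{N_k}_s(x,s)\,ds$ together with $\bg^{N_k}\to\bg$ in $H^1_0(U;\R^d)$; combined with the uniform $L^2([0,T];H^1_0)$ bound on $(\bu^N_t)$ and the already-obtained strong $L^2(U\times[0,T];\R^d)$ convergence of $\bu^{N_k}_t$, this gives $\bu^{N_k}\rightharpoonup\bu$ weakly in $L^2([0,T];H^1_0(U;\R^d))$ and identifies the limit. To upgrade to strong convergence I would test the weak form \eqref{WeadSolnDamped} with $\bu^{N_k}-\bu$ (using the standard mollification/truncation in $t$ so this is admissible) and exploit the Andrews--Ball semiconvexity through the monotonicity inequality $(DF(A)-DF(B))\cdot(A-B)\ge -L|A-B|^2$ to control $\int_0^T\!\int_U|D\bu^{N_k}-D\bu|^2dxdt$ from above by quantities that vanish in the limit. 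The main obstacle is exactly this last step: Aubin--Lions alone does not compactly embed $L^2([0,T];H^1_0)$ into itself, so strong convergence of the gradients must be forced out of the equation. The key point that makes the argument close is that the strong $L^2(U\times[0,T];\R^d)$ convergence of $\bu^{N_k}_t$ from the previous paragraph is precisely what allows the damping term $\mu\int_0^T\!\int_U D\bu^{N_k}_t\cdot D(\bu^{N_k}-\bu)\,dxdt$ and the kinetic term coming from $\bu^{N_k}_{tt}\cdot(\bu^{N_k}-\bu)$ to pass to the limit, leaving only the semiconvex term, which is handled by the monotonicity estimate above.
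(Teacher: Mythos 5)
Your handling of the first four convergences and the strong $L^2(U\times[0,T])$ convergence of $\bu^{N_k}_t$ is sound, although for that fifth item the paper uses a more elementary device than Aubin--Lions--Simon: it applies the interpolation inequality $\|v\|^2_{L^2(U)}\le \tfrac{1}{2\delta}\|v\|^2_{H^{-1}(U)}+\tfrac{\delta}{2}\|v\|^2_{H^1_0(U)}$ to $\bu^{N_k}_t-\bu_t$, then uses the uniform $C([0,T];H^{-1})$ convergence (already established) together with the uniform $L^2([0,T];H^1_0)$ bound on $\bu^N_t$ and lets $\delta\to 0$. Both routes work; the paper's avoids invoking the Aubin--Lions machinery.

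Your argument for the last convergence, however, has a genuine gap. You propose to test the weak formulation with $\bu^{N_k}-\bu$ and extract a monotonicity estimate of the form $\int_0^T\!\int_U(DF(D\bu^{N_k})-DF(D\bu))\cdot(D\bu^{N_k}-D\bu)\ge -L\int_0^T\!\int_U|D\bu^{N_k}-D\bu|^2$. The problem is that at this stage of the proof you do not know that $\bu$ is a weak solution of \eqref{IVPmu}; that is the statement of the next theorem, which depends on this lemma. So you cannot subtract an equation for $\bu$ from the equation for $\bu^{N_k}$ to produce the pairing $(DF(D\bu^{N_k})-DF(D\bu))\cdot D(\bu^{N_k}-\bu)$ and the corresponding damping and kinetic cross-terms; the argument is circular as stated, and the ``mollification/truncation in $t$'' step does not repair this. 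The paper sidesteps the issue entirely by running a Cauchy-sequence argument between two Galerkin levels $\bu^{N_k}$ and $\bu^{N_\ell}$, both of which are classical solutions: it computes
\[
\frac{d}{dt}\int_U\frac{1}{2}|D\bu^{N_k}-D\bu^{N_\ell}|^2\,dx,
\]
integrates by parts using $\Delta\bu^{N}_t=\mu^{-1}\bigl(\bu^{N}_{tt}-\Div DF(D\bu^{N})\bigr)$, bounds the $DF$ difference below via the Andrews--Ball monotonicity $(DF(A)-DF(B))\cdot(A-B)\ge -L|A-B|^2$, controls the $\bu_{tt}$ cross-terms by a further integration by parts in $t$ combined with the already-proved strong $L^2(U\times[0,T];\R^d)$ convergence of $\bu^{N_k}_t$ and the convergence $\bg^{N}\to\bg$ in $H^1_0$, and closes with Gronwall to show $(D\bu^{N_k})_k$ is Cauchy in $L^2(U\times[0,T];\Md)$. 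You should replace the comparison with $\bu$ by this comparison between Galerkin iterates.
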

\begin{proof}
By our coercivity assumptions on $F$, there is a constant $C$ such that
\be\label{UnifBoundNormsMmu}
\sup_{0\le t\le T}\int_U|\bu^N_t(x,t)|^2 +|D\bu^N(x,t)|^2dx+\int^T_0\int_U|D\bu_t^N|^2dxds\le C
\ee
for every $N\in\N$.  Arguing as we did in the proof of Lemma \ref{umkCompactnessLem}, the first four assertions in braces hold along with $\bu(\cdot,0)=\bg$ and $\bu_t(\cdot,0)=\bh$ for some $\bu$ that satisfies \eqref{Spaces}. Moreover, $\bu^N_t\in L^2([0,T];H^1_0(U;\R^d))$ is bounded and thus $\bu^{N_k}_t\rightharpoonup \bu_t$ in $L^2([0,T];H^1_0(U;\R^d))$. Consequently, 
$$
\int^T_0\int_U|D\bu_t|^2dxds\le C
$$
and $\bu$ satisfies \eqref{SpacesMu}. 

\par We are left to verify the last two assertion in the braces.  Recall that for each $\delta>0$,  
$$
\|v\|^2_{L^2(U)}\le \frac{1}{2\delta} \|v\|^2_{H^{-1}(U)}+\frac{\delta}{2} \| v\|^2_{H^1_0(U)}
$$
for every $v\in H^1_0(U)$.  It follows that
\begin{align*}
\int^T_0\|\bu^N_t(\cdot,t)-\bu_t(\cdot,t)\|^2_{L^2(U)}dt&\le \frac{1}{2\delta} \int^T_0\|\bu^N_t(\cdot,t)-\bu_t(\cdot,t)\|^2_{H^{-1}(U)}dt\\
&\quad +\frac{\delta}{2}  \int^T_0\|\bu^N_t(\cdot,t)-\bu_t(\cdot,t)\|^2_{H^{1}_0(U)}dt\\
&\le \frac{1}{2\delta}\int^T_0\|\bu^N_t(\cdot,t)-\bu_t(\cdot,t)\|^2_{H^{-1}(U)}dt+2C\delta.
\end{align*}
By the uniform convergence of $(\bu^{N_k}_t)_{k\in\N}$ in $H^{-1}(U;\R^d)$, we have 
$$
\lim_{k\rightarrow\infty}\int^T_0\|\bu^{N_k}_t(\cdot,t)-\bu_t(\cdot,t)\|^2_{L^2(U)}dt\le 2C\delta.
$$
As $\delta>0$ is arbitrary, $\bu^{N_k}_t\rightarrow \bu_t\; \text{in}\; L^2(U\times[0,T];\R^d)$.

\par In order to verify that $D\bu^{N_k}$ converges to $D\bu$ in $L^2(U\times [0,T];\Md)$, we will use the  strong convergence of $\bu^{N_k}_t$ and the fact that $F$ is semiconvex.  The computation below is also reminiscent of the proof of Proposition 3.1 in \cite{MR1434040}.  Let $k,\ell\in \N$ and observe
\begin{align*}
\frac{d}{dt}\int_U\frac{1}{2}|D\bu^{N_k}-D\bu^{N_\ell}|^2dx&=\int_U(D\bu^{N_k}-D\bu^{N_\ell})\cdot (D\bu^{N_k}_t-D\bu^{N_\ell}_t)dx\\
&=-\int_U(\bu^{N_k}-\bu^{N_\ell})\cdot (\Delta\bu^{N_k}_t-\Delta\bu^{N_\ell}_t)dx\\
&=-\int_U(\bu^{N_k}-\bu^{N_\ell})\cdot (\bu^{N_k}_{tt}-\bu^{N_\ell}_{tt}-\Div(DF(D\bu^{N_k})-DF(D\bu^{N_\ell}))dx\\
&=-\int_U(\bu^{N_k}-\bu^{N_\ell})\cdot (\bu^{N_k}_{tt}-\bu^{N_\ell}_{tt})dx \\
&\quad -\int_UDF(D\bu^{N_k})-DF(D\bu^{N_\ell})\cdot (D\bu^{N_k}-D\bu^{N_\ell})dx\\
&\le-\int_U(\bu^{N_k}-\bu^{N_\ell})\cdot (\bu^{N_k}_{tt}-\bu^{N_\ell}_{tt})dx +L\int_U|D\bu^{N_k}-D\bu^{N_\ell}|^2dx.
\end{align*}
Integrating this inequality gives 
\begin{align*}
\int_U\frac{1}{2}|D\bu^{N_k}(x,t)-D\bu^{N_\ell}(x,t)|^2dx&\le \int_U\frac{1}{2}|D\bg^{N_k}-D\bg^{N_\ell}|^2dx + L\int^t_0\int_U|D\bu^{N_k}-D\bu^{N_\ell}|^2dxdt\\
&\quad -\int^t_0\int_U(\bu^{N_k}-\bu^{N_\ell})\cdot (\bu^{N_k}_{tt}-\bu^{N_\ell}_{tt})dxdt\\
&\le \int_U\frac{1}{2}|D\bg^{N_k}-D\bg^{N_\ell}|^2dx + L\int^t_0\int_U|D\bu^{N_k}-D\bu^{N_\ell}|^2dxdt\\
&\quad +\int^t_0\int_U|\bu^{N_k}_{t}-\bu^{N_\ell}_{t}|^2dxdt -\int_U(\bu^{N_k}-\bu^{N_\ell})\cdot (\bu^{N_k}_{t}-\bu^{N_\ell}_{t})dx|^{t=T}_{t=0}\\
&\le o(1)+ L\int^t_0\int_U|D\bu^{N_k}-D\bu^{N_\ell}|^2dxdt 
\end{align*}
as $k,\ell\rightarrow\infty$.  Applying Gronwall's inequality, we find 
$$
\int^T_0\int_U|D\bu^{N_k}-D\bu^{N_\ell}|^2dxdt
\le  \frac{e^{2LT}-1}{2L}o(1)
$$
as $k,\ell\rightarrow\infty$.  The assertion follows as $L^2([0,T];H^1_0(U;\R^d))$ is complete. 
\end{proof}
At last, we will verify the existence of a weak solution of the initial value problem  \eqref{IVPmu}. 
\begin{thm}
For each $\bg\in H^1_0(U;\R^d)$ and $\bh\in L^2(U;\R^d)$, there exists a weak solution $\bu$ of \eqref{IVPmu}.
\end{thm}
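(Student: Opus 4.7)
The plan is to pass to the limit in the Galerkin identity for $\bu^N$ along the subsequence $(\bu^{N_k})_{k\in\N}$ obtained in Lemma \ref{LastCompactUNlemma}. Since each $\bu^N$ is a classical solution of the damped PDE with data $\bg^N,\bh^N$, integrating by parts against $\phi\in C^\infty_c(U\times[0,T);\R^d)$ yields
$$
\int^T_0\int_U \phi_t\cdot \bu^N_t\,dxdt+\int_U \phi|_{t=0}\cdot \bh^N\,dx=\int^T_0\int_U D\phi\cdot DF(D\bu^N)\,dxdt+\mu\int^T_0\int_U D\phi\cdot D\bu^N_t\,dxdt.
$$
I will show each term converges along $N_k$ to its expected limit, producing \eqref{WeadSolnDamped}.

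First I would dispatch the three linear terms. The strong convergence $\bu^{N_k}_t\to \bu_t$ in $L^2(U\times[0,T];\R^d)$ from Lemma \ref{LastCompactUNlemma} handles the time-derivative pairing; the convergence $\bh^{N_k}\to \bh$ in $L^2(U;\R^d)$, built into the Galerkin initial data, handles the initial-value term; and the weak convergence $\bu^{N_k}_t\rightharpoonup \bu_t$ in $L^2([0,T];H^1_0(U;\R^d))$, which is established en route to the $L^2$ bound on $D\bu_t$ in Lemma \ref{LastCompactUNlemma}, handles the damping integral $\mu\int D\phi\cdot D\bu^N_t$.

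The substantive step is the nonlinear term $\int D\phi\cdot DF(D\bu^N)$. The strong convergence $\bu^{N_k}\to \bu$ in $L^2([0,T];H^1_0(U;\R^d))$ provided by Lemma \ref{LastCompactUNlemma} gives $D\bu^{N_k}\to D\bu$ in $L^2(U\times[0,T];\Md)$. Extracting a further subsequence with $D\bu^{N_k}\to D\bu$ almost everywhere, continuity of $DF$ gives $DF(D\bu^{N_k})\to DF(D\bu)$ almost everywhere. The linear growth bound \eqref{LinearBoundDF} together with strong $L^2$ convergence of $D\bu^{N_k}$ makes the family $|DF(D\bu^{N_k})|^2$ equi-integrable, so Vitali's theorem upgrades this to strong convergence in $L^2(U\times[0,T];\Md)$, and the nonlinear integral passes to the limit. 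Combining with the previous step, $\bu$ satisfies \eqref{WeadSolnDamped}. The regularity \eqref{SpacesMu} and the initial condition $\bu(\cdot,0)=\bg$ are already part of Lemma \ref{LastCompactUNlemma}, so $\bu$ is a weak solution.

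The main obstacle is precisely this nonlinear passage to the limit: in the undamped model one could only extract a Young measure solution. What rescues the situation here is the additional bound $\int_0^T\int_U|D\bu^N_t|^2<\infty$ gained from dissipation, which, combined with the Andrews-Ball semiconvexity of $F$, yielded the strong $L^2$ convergence of $D\bu^{N_k}$ in Lemma \ref{LastCompactUNlemma}. That strong convergence is exactly what makes the argument above possible; there is nothing further to do beyond carefully invoking the compactness already in hand.
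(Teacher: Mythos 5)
Your argument is correct and follows the same route the paper takes: generate Galerkin approximants $\bu^N$, extract the convergent subsequence via Lemma \ref{LastCompactUNlemma}, and pass to the limit in the weak formulation. The paper's proof is a three-line sketch that simply invokes the lemma; what you have done is fill in the details it leaves implicit, and every detail is right. In particular, your handling of the nonlinear term $\int D\phi\cdot DF(D\bu^{N_k})$ is exactly the point that makes the damped problem tractable: the strong $L^2([0,T];H^1_0)$ convergence of $\bu^{N_k}$ (item six of the lemma) gives $D\bu^{N_k}\to D\bu$ strongly in $L^2$, and then a.e.\ convergence along a further subsequence combined with the linear growth bound \eqref{LinearBoundDF} and Vitali's theorem yields $DF(D\bu^{N_k})\to DF(D\bu)$ in $L^2$. (A small streamlining: since $DF$ is continuous with linear growth, the Nemytskii map $A\mapsto DF(A)$ is continuous from $L^2$ to $L^2$ by Krasnoselskii's theorem, so the a.e.-plus-Vitali step can be collapsed into one citation; and since the limit $\int D\phi\cdot DF(D\bu)$ is independent of the further subsequence, the whole subsequence $N_k$ converges, so no genuine sub-subsequence extraction is needed.) Your identification of the weak $L^2([0,T];H^1_0)$ convergence of $\bu^{N_k}_t$ for the damping term is also correct—it is established in the proof of Lemma \ref{LastCompactUNlemma} even though it is not listed in the display.
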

\begin{proof}
Let $(\bu^N)_{N\in \N}$ denote the sequence of weak solutions we obtained from Galerkin's method. Note that for each test mapping $\phi\in C^\infty_c(U\times[0,T);\R^d)$,
\begin{align*}
\int^T_0\int_U \phi_t\cdot \bu^N_tdxdt+\int_U \phi|_{t=0}\cdot \bh^N dx=\int^T_0\int_U D\phi\cdot\left( DF(D\bu^N)+\mu D\bu^N_t\right)dxdt.
\end{align*}
By Lemma \ref{LastCompactUNlemma}, there is a subsequence $(\bu^{N_k})_{k\in \N}$ which converges 
in various senses to a mapping $\bu$ which satisfies $\bu(\cdot,0)=\bg$ and \eqref{SpacesMu}. Moreover, we can send $N=N_k\rightarrow \infty$ above to conclude that $\bu$ satisfies \eqref{WeadSolnDamped}. It follows that $\bu$ is the desired weak solution. 
\end{proof}

\appendix

\bibliography{NewtonBib}{}

\bibliographystyle{plain}

\end{document}